\documentclass{amsart}
 \setlength{\textwidth}{\paperwidth}
\addtolength{\textwidth}{-2in}
\calclayout
 \usepackage{amssymb,amsmath,amsfonts,epsfig,latexsym,tikz}
 \usepackage[alphabetic]{amsrefs}
 \usepackage{tikz-cd}
\usepackage{hyperref}
\usepackage{enumerate}
\usepackage{mathtools}
\usepackage{verbatim}
\usepackage{cleveref}
\usepackage[shortlabels]{enumitem}
\usepackage{subcaption}
\usepackage{array}
\usepackage{makecell}

\usetikzlibrary{positioning}
\usetikzlibrary{matrix}
\usetikzlibrary{decorations}
\usetikzlibrary{decorations.pathreplacing, decorations.pathmorphing, angles,quotes}
 
 \newtheorem{theorem}{Theorem}[section]
\newtheorem{proposition}[theorem]{Proposition}
\newtheorem{lemma}[theorem]{Lemma}
\newtheorem{corollary}[theorem]{Corollary}
\newtheorem{conjecture}[theorem]{Conjecture}
\newtheorem{claim}[theorem]{Claim}

\newcommand{\F}{\mathbf{k}}

\newcommand{\Span}{\operatorname{span}}
\newcommand{\Lam}{\mathsf{Lam}}

\theoremstyle{definition}
\newtheorem{definition}[theorem]{Definition}
\newtheorem{remark}[theorem]{Remark}
\newtheorem{example}[theorem]{Example}

\begin{document}

\title[Rigidity matroids and linear algebraic matroids]{Rigidity matroids and linear algebraic matroids with applications to matrix completion and tensor codes}
\author{Joshua Brakensiek}\email{josh.brakensiek@berkeley.edu}
\author{Manik Dhar}\email{dmanik@mit.edu}
\author{Jiyang Gao}\email{jgao@math.harvard.edu}
\author{Sivakanth Gopi}\email{sivakanth@openai.com}
\author{Matt Larson}
\email{mattlarson@princeton.edu}

\begin{abstract}
We establish a connection between problems studied in rigidity theory and matroids arising from linear algebraic constructions like tensor products and symmetric products. A special case of this correspondence identifies the problem of giving a description of the correctable erasure patterns in a maximally recoverable tensor code with the problem of describing bipartite rigid graphs or low-rank completable matrix patterns. Additionally, we relate dependencies among symmetric products of generic vectors to graph rigidity and symmetric matrix completion. With an eye toward applications to computer science, we study the dependency of these matroids on the characteristic by giving new combinatorial descriptions in several cases, including the first description of the correctable patterns in an $(m, n, a=2, b=2)$ maximally recoverable tensor code.
\end{abstract}
 
\maketitle

\vspace{-20 pt}

\section{Introduction}

Given a graph $G$, the graph rigidity problem in $\mathbb{R}^d$ asks whether a generic embedding of the vertices of $G$ into $\mathbb{R}^d$ is \emph{rigid}, i.e., whether every motion of $G$ which preserves the lengths of edges comes from a rigid motion of $\mathbb{R}^d$. This problem has been studied since the time of Maxwell \cite{Maxwell}. When $d=2$, the Pollaczek-Geiringer--Laman theorem \cites{PollaczekGeiringer,Laman} gives a simple characterization of rigid graphs. There is no known generalization to embeddings of graphs into $\mathbb{R}^d$ for any $d \ge 3$.

The rigid graphs on $n$ vertices form the spanning sets of a \emph{matroid} on the ground set $\binom{[n]}{2}$, the set of edges of the complete graph with vertex set $[n] = \{1, \dotsc, n\}$. The language of matroids gives a convenient framework and powerful tools for analyzing rigidity \cites{JacksonRigidity,CJT2,Graver,AbstractRigidity}. We refer to \cite{Oxley} for undefined matroid terminology. 

Several other matroids related to rigidity have been introduced, such as Kalai's \emph{hyperconnectivity matroid} \cite{KalaiHyperconnectivity} $\mathcal{H}_n(d)$, for $0 \le d \le n$. This is a matroid whose ground set is $\binom{[n]}{2}$ which has similar formal properties to the usual graph rigidity matroid. Kalai used the hyperconnectivity matroid to show the existence of highly connected subgraphs in graphs with a large number of edges. Hyperconnectivity was used to study polytopal realizations of certain simplicial spheres called \emph{higher associahedra} \cites{RuizSantos1,RuizSantos2,RuizBipartite}.

More recently introduced is the \emph{bipartite rigidity matroid} $\mathcal{B}_{m,n}(a,b)$ of Kalai, Nevo, and Novak \cite{kalai2016bipartite}. This is a matroid on $[m] \times [n]$ which gives a version of rigidity for an embedding of a bipartite graph, where the parts have size $m$ and $n$, into $\mathbb{R}^{a} \oplus \mathbb{R}^b$ in such a way that it respects the direct sum structure. When $a = b$, the restriction of the hyperconnectivity matroid to the set of bipartite graphs coincides with the bipartite rigidity matroid. 
See Section~\ref{ssec:rigiditydef} for precise definitions of these matroids. 

The rigidity matroids mentioned above are closely related to \emph{low-rank matrix completion matroids}. Given $d$ and $n$, the \emph{symmetric matrix completion matroid} $\mathcal{S}_n(d)$ is a matroid on ground set $\binom{[n]}{2} \sqcup [n]$. A subset $S$ is independent if a matrix where the entries corresponding to $S$ have been filled in with generic complex numbers can be completed to a symmetric matrix of rank at most $d$. 
This is an algebraic matroid realized by the variety of $n \times n$ symmetric matrices of rank at most $d$. By \cite{GrossSullivant}*{Theorem 2.4}, the matroid describing graph rigidity in $\mathbb{R}^{d-1}$ is obtained by contracting the elements corresponding to the diagonal in $\mathcal{S}_n(d)$. In particular, a description of $\mathcal{S}_n(d)$ gives a description of graph rigidity in $\mathbb{R}^{d-1}$. The symmetric matrix completion matroid has been studied in connection with maximum likelihood problems in algebraic statistics \cites{BlekhermanSinn,BernsteinBlekhermanLee}. When $d=2$, it was studied from the perspective of tropical geometry in \cite{CLY}.

Similarly, there is a \emph{matrix completion matroid} which describes when an $m \times n$ matrix which has been partially filled in with generic complex numbers can be completed to a matrix of rank at most $d$. The matrix completion matroid was studied in \cites{bernstein2017completion,Tsakiris}. This matroid is equal to the bipartite rigidity matroid $\mathcal{B}_{m,n}(d,d)$ \cite{SingerCucuringu}*{Section 4}. As skew-symmetric matrices have even rank, one can only consider rank $d$ skew-symmetric matrix completion when $d$ is even. In this case, the analogously defined \emph{skew-symmetric matrix completion matroid} coincides with $\mathcal{H}_n(d)$ \cite{CrespoRuizSantos}*{Proposition 3.1}.

\medskip
We will establish a connection between the above rigidity matroids and matroids arising from natural linear algebraic constructions. Suppose that we have $n$ generic vectors $v_1, \dotsc, v_n$ in an $r$-dimensional vector space $V$ over an infinite field of characteristic $p \ge 0$. We assume $n \ge r$, so the vectors span $V$. Then we obtain $\binom{n+1}{2}$ vectors $v_1^2, v_1v_2, \dotsc, v_{n-1}v_n, v_n^2$ in $\operatorname{Sym}^2 V$. These vectors represent a matroid $\mathrm{S}_n(r, p)$ on the ground set $\binom{[n]}{2} \sqcup [n]$. This matroid depends only on the characteristic of the field, and in particular does not depend on the choice of vectors (provided they are sufficiently generic). We call this the \emph{symmetric power matroid}. See Section~\ref{ssec:linearalg} for a more detailed definition. 

Similarly, we obtain $\binom{n}{2}$ vectors $v_1 \wedge v_2, \dotsc, v_{n-1} \wedge v_n$ in $\wedge^2V$. These vectors represent a matroid $\mathrm{W}_n(r, p)$, which we call the \emph{wedge power matroid}. If we also have $m$ generic vectors $u_1, \dotsc, u_m$ in an $s$-dimensional vector space $U$ over the same field, for some $m \ge s$, then we obtain $mn$ vectors $u_1 \otimes v_1, \dotsc, u_m \otimes v_n$ in $U \otimes V$. These vectors represent a matroid $\mathrm{T}_{m,n}(s,r, p)$, which we call the \emph{tensor matroid}. As with the symmetric power matroids, these matroids depend only on the characteristic of the field. 

Similar constructions for arbitrary matroids have been studied classically in the matroid literature \cites{Lovasz,Mason,las1981products}. The case of symmetric powers has attracted particular attention recently in connection with applications to tropical geometry \cites{DraismaRincon,Anderson}.

\begin{table}[ht]
    \centering
    \begin{tabular}{|c||c|c|c|}
         \hline
         & Bipartite & Symmetric & Skew-symmetric \\
         \hline\hline
         Rigidity matroids & \makecell{Bipartite rigidity matroid \\ $\mathcal{B}_{m,n}(a,b)$} & \makecell{Graph rigidity matroid \\ $\mathcal{S}_n(d+1)/\{\text{diags}\}$} & \makecell{Hyperconnectivity matroid \\ $\mathcal{H}_n(d)$}\\
         \hline
         \makecell{Rank completion \\ matroids} & \makecell{Matrix completion matroid \\ $\mathcal{B}_{m,n}(d,d)$} & \makecell{Sym. matrix completion \\ $\mathcal{S}_n(d)$} & \makecell{Skew-sym. matrix completion \\ $\mathcal{H}_n(d)$, $d$ is even}\\
         \hline
         \makecell{Linear algebraic \\ matroids} & \makecell{Tensor matroid \\ $\mathrm{T}_{m,n}(s,r,p)$} & \makecell{Sym. power matroid \\ $\mathrm{S}_n(r,p)$} & \makecell{Wedge power matroid \\ $\mathrm{W}_n(r,p)$}\\
         \hline
    \end{tabular}
    \caption{A table of various matroids that have been mentioned. The matroids in the same column are related by Theorem~\ref{thm:equivalence}.}
    \label{tab:matroid-table}
\end{table}

\subsubsection*{Connections to Information Theory.} The problem of understanding $\mathrm{T}_{m,n}(s,r,p)$ 
has been studied extensively in information theory in connection with \emph{tensor codes} (c.f., e.g., \cite{MLRH14} for a practical implementation). Consider a collection of servers arranged in an $m \times n$ grid. We view the data stored on each server as an element of some (large) finite field $\textbf{k}$ of characteristic $p$. To ensure redundancy in this cluster, we constrain that each column lies in some fixed subspace of $\textbf{k}^m$ of dimension $s$ and each row lies in some fixed subspace of $\textbf{k}^n$ of dimension $r$.\footnote{More commonly, information theorists care about the codimension of these spaces, often denoted by $a = m-s$ and $b = n-r$, respectively, as that is a measure of the redundancy of the encoding.} The goal of this redundancy is that if a (small) subset of the servers fail (also called \emph{erasures}), we can completely recover the data using these various subspace constraints. The matroid of failures which do not lead to data loss is sensitive to the choice of subspaces, but if everything is chosen generically, the ``recoverability matroid'' is the matroid dual of $\mathrm{T}_{m,n}(s,r,p)$.

Gopalan et al. \cite{gopalan2017maximally} coined the term $(m,n,a,b)$-\emph{maximally recoverable tensor codes} for realizations of $\mathrm{T}_{m,n}(m-a,n-b,p)$ as tensor products of vector spaces over finite fields. Gopalan et al. gave an exponential-sized description of the spanning sets of $\mathrm{T}_{m,n}(s,r,p)$ when $m-s=1$ and conjectured a description in general. This conjecture was partially confirmed \cite{Shivakrishna_MRproduct} but was later refuted in general \cite{holzbaur2021correctable}. Overall, the focus of the information theory community has been on constructing maximally recoverable tensor codes over small fields (e.g., \cites{kong2021new,holzbaur2021correctable,roth2022higher,shivakrishna2022properties,brakensiek2023improved,athi2023structure}), although exponential-sized lower bounds are now known \cites{brakensiek2023improved,alrabiah2023ag}.
For applications, it is important to understand the matroid realized by the tensor products of vectors which are not completely generic. For example, \cite[Section 1.3]{brakensiek2023generalized} asks if the tensor products of generic vectors on the moment curve give a maximally recoverable tensor code. 

Within information theory, the study of \emph{higher order maximum-distance separable (MDS) codes} \cites{bgm2022mds,roth2022higher,bgm2023generic} has shown that, in the $m-s=1$ case, essentially the same matroid arises in many other problems. Such scenarios include designing optimally list-decodable codes \cite{shangguan2023generalized} and codes realizing particular zero patterns  \cites{dau2014gmmds,yildiz2019gmmds,lovett2021sparse,yildiz2019gmmds,liu2023linearized,brakensiek2023generalized}. Further, these equivalences imply that a construction of any one of these types of codes can be converted into the other types \cite{bgm2023generic}, and they have led to many new constructions and analyses of near-optimal codes \cites{guo2023randomly,alrabiah2023randomly,alrabiah2023ag,bdgz2023,ron2024efficient}. Very recently, these connections also led to a novel proof of the Pollaczek-Geiringer–Laman theorem \cite{bell2023kapranov}.

\medskip
\noindent Our first main result is a correspondence between the above linear algebraic matroids and rigidity matroids. Recall that the dual of a matroid $\mathrm{M}$ is the matroid whose bases are the complements of the bases of $\mathrm{M}$. 

\begin{theorem}\label{thm:equivalence}
We have the following matroid dualities:
\begin{enumerate}
	\item\label{symcase} The symmetric power matroid $\mathrm{S}_n(n-d, 0)$ is dual to the symmetric matrix completion matroid $\mathcal{S}_n(d)$. 
	\item\label{skewcase} The wedge power matroid $\mathrm{W}_n(n-d, 0)$ is dual to the hyperconnectivity matroid $\mathcal{H}_n(d)$.
	\item \label{bipartitecase} The tensor matroid $\mathrm{T}_{m,n}(m-a, n-b, 0)$ is dual to the bipartite rigidity matroid $\mathcal{B}_{m,n}(a,b)$. 
\end{enumerate}
\end{theorem}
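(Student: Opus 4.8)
The plan is to prove all three dualities uniformly by identifying, in each case, the linear-algebraic matroid as a "generic" matroid attached to a Grassmannian-type parameter space, and then recognizing the rigidity matroid as the matroid of the corresponding Jacobian of the defining equations. Concretely, in the tensor case, a completion of an $m \times n$ matrix of rank at most $\min(m-a+?, \dots)$ — rather, the natural object is: the vanishing of $u_i \otimes v_j$ coefficients. I would set up the rank function of $\mathrm{T}_{m,n}(m-a,n-b,0)$ explicitly. If $U = \mathbf{k}^{m-a}$ with generic vectors $u_1, \dots, u_m$ and $V = \mathbf{k}^{n-b}$ with generic $v_1, \dots, v_n$, then the $mn$ vectors $u_i \otimes v_j$ live in an $(m-a)(n-b)$-dimensional space, and a subset $S \subseteq [m] \times [n]$ is independent iff the corresponding vectors are linearly independent. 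The key observation is that a linear dependence $\sum_{(i,j) \in S} c_{ij}\, u_i \otimes v_j = 0$ is exactly the statement that the matrix $C = (c_{ij})$ (supported on $S$) satisfies $U^\top C V = 0$ where $U, V$ are the generic matrices with rows $u_i$, $v_j$; equivalently $C$ lies in the kernel of the map sending a matrix to $(u_i^\top C v_j)$... no: $\sum c_{ij} u_i v_j^\top = U^\top \operatorname{diag-ish}$. Let me restate: the dependence says $\sum_{i,j} c_{ij} u_i v_j^\top = 0$ as an $(m-a)\times(n-b)$ matrix, i.e. $\widetilde U^\top C \widetilde V = 0$ where $\widetilde U$ is $m \times (m-a)$, $\widetilde V$ is $n \times (n-b)$, both of full column rank and generic. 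The space of such $C$ has dimension $mn - (m-a)(n-b) - \dots$; in fact the solution space is exactly $\{ P \widetilde V_\perp^\top + \widetilde U_\perp Q : \dots\}$ — the space of $m \times n$ matrices of the form $X$ with column space in a fixed generic $a$-dimensional subspace plus row space in a fixed generic $b$-dimensional subspace, matching the intrinsic description of the tensor code's dual.

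The second and main step is to match this with the bipartite rigidity matroid $\mathcal{B}_{m,n}(a,b)$ from Section~\ref{ssec:rigiditydef}. By definition (Kalai–Nevo–Novak), $\mathcal{B}_{m,n}(a,b)$ is the matroid of the rows of the rigidity matrix whose row indexed by $(i,j)$ records the derivative of the squared-distance-type constraint for a generic embedding of the two vertex classes into $\mathbb{R}^a \oplus \mathbb{R}^b$; that row lies in a space of dimension $am + bn - \binom{a+1}{2} - \binom{b+1}{2} - \dots$. Rather than go through the rigidity matrix directly, I would use the infinitesimal-motions description: a subset $S$ is dependent in $\mathcal{B}_{m,n}(a,b)$ iff there is a nonzero assignment of "stresses" $\omega_{ij}$, $(i,j) \in S$, with $\sum_j \omega_{ij} q_j^{(b)} = 0$ for all $i$ and $\sum_i \omega_{ij} p_i^{(a)} = 0$ for all $j$, where $p_i \in \mathbb{R}^a$, $q_j \in \mathbb{R}^b$ are the generic coordinates — and (after passing to the cone / homogenizing) this is precisely the condition $\widetilde P^\top \Omega = 0$ and $\Omega \widetilde Q = 0$ for the stress matrix $\Omega$ supported on $S$. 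The set of such stress matrices is the orthogonal complement, inside $\mathbf{k}^{[m]\times[n]}$ with the standard pairing, of the span of the tensors $u_i \otimes v_j$ once we take $\widetilde U = \widetilde P$, $\widetilde V = \widetilde Q$ (the generic embedding data of the rigidity side serves as the generic vectors of the tensor side). Hence a set is spanning in the dual of $\mathcal{B}_{m,n}(a,b)$ — equivalently its complement is independent in $\mathcal{B}_{m,n}(a,b)$ — iff it is independent in $\mathrm{T}_{m,n}(m-a,n-b,0)$. Since both matroids have the same rank $(m-a)(n-b)$ on the full ground set (the tensors span all of $U \otimes V$ by genericity, and $\mathcal{B}_{m,n}(a,b)$ has corank $(m-a)(n-b)$, which is a known fact recorded in the definitions section), matching circuits/stresses with linear dependences gives the duality on the nose.

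For cases~\eqref{symcase} and~\eqref{skewcase} the argument is the same with the bilinear pairing replaced by the appropriate symmetric/alternating one: a linear dependence among the $v_iv_j \in \operatorname{Sym}^2 V$ (resp. $v_i\wedge v_j \in \wedge^2 V$) for generic $v_i \in \mathbf{k}^{n-d}$ is a symmetric (resp. skew-symmetric) $n\times n$ matrix $C$ supported on the chosen set with $\widetilde V^\top C \widetilde V = 0$, and the space of such $C$ is the space of symmetric (resp. skew) matrices that complete to rank $\le d$ given generic entries elsewhere — which is exactly the tangent/cocircuit space of $\mathcal{S}_n(d)$ (resp. of $\mathcal{H}_n(d)$, using the identification of $\mathcal{H}_n(d)$ with skew-symmetric rank-$\le d$ completion quoted in the introduction). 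The one subtlety to be careful about is the treatment of the diagonal in the symmetric case: the ground set $\binom{[n]}{2}\sqcup[n]$ must be handled so that the quadratic monomials $v_i^2$ pair correctly against the diagonal matrix entries, and one should check the rank counts $\binom{n+1}{2} - \dim\operatorname{Sym}^2 V^{\le d}$-style identities line up (this is where \cite{GrossSullivant} and the explicit dimension of the rank variety enter).

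The step I expect to be the main obstacle is verifying that the "generic stress space" of the rigidity matroid is literally the orthogonal complement of the span of the decomposable tensors for the \emph{same} choice of generic data — i.e., that the homogenization/coning used to pass from the affine rigidity setting ($\mathbb{R}^a \oplus \mathbb{R}^b$ with squared distances) to the linear setting (tensors of vectors) does not change the matroid. This requires showing that the extra rows of the rigidity matrix coming from the translational/rotational part of the motion group either do not affect the row matroid or are accounted for by the dimension bookkeeping; concretely one must check that the affine rigidity matrix and the homogeneous "$\widetilde P^\top \Omega = 0,\ \Omega\widetilde Q = 0$" system define the same matroid on $[m]\times[n]$, which is plausible because generic affine points can be lifted to generic linear ones by adding a coordinate equal to $1$, but it needs a clean proof. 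Once that identification is in hand, the duality in all three cases is the purely formal statement that a matroid represented by vectors spanning a space $W$ and the matroid of linear dependences living in $W^\perp$ are dual.
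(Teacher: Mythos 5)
Your proposal is correct and follows essentially the same route as the paper. You compute the cycle (circuit) space of the linear-algebraic matroid as $\{C : \widetilde U^\top C \widetilde V = 0\}$ (and the symmetric/skew analogues), identify this subspace of $\mathbf{k}^{[m]\times[n]}$ with the stress space of the corresponding rigidity matroid, and invoke the general fact that the cycle space of a represented matroid is the row space of any representation of its dual. The paper does the same linear algebra from the row-space side: it observes that the row space of the tensor (resp.\ symmetric/wedge power) matrix is $L_1 \otimes L_2$ (resp.\ $\operatorname{Sym}^2 L$, $\wedge^2 L$), that the orthogonal complement is the image of $L_1^\perp \otimes K^n \oplus K^m \otimes L_2^\perp$ (resp.\ of $L^\perp \otimes K^n$), and then matches an explicit basis for that image with the matrices of Definition~\ref{def:bipartiterigidity}, Proposition~\ref{prop:symmetricmatrix}, and Definition~\ref{def:hyperconnectivity}. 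These are two phrasings of the same identification.

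The obstacle you flag at the end is not present in the paper's setup: $\mathcal{B}_{m,n}(a,b)$ is \emph{defined} in Definition~\ref{def:bipartiterigidity} directly as the column matroid of the homogeneous $(an+bm)\times mn$ matrix (with the equivalence to the geometric/affine description delegated to Kalai--Nevo--Novak), so there is no coning step to verify; the same is true for $\mathcal{S}_n(d)$ via Proposition~\ref{prop:symmetricmatrix} and for $\mathcal{H}_n(d)$ via Definition~\ref{def:hyperconnectivity}. One small imprecision: ``the orthogonal complement of the span of the tensors $u_i\otimes v_j$'' should be the orthogonal complement, inside $\mathbf{k}^{[m]\times[n]}$, of the \emph{row space} of the $(m-a)(n-b)\times mn$ matrix whose columns are the $u_i\otimes v_j$, i.e.\ of the span of the rank-one matrices $\tilde u_\alpha \tilde v_\beta^\top$. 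The vectors $u_i\otimes v_j$ themselves span all of $U\otimes V$, which is a different space. With that wording fixed, your argument closes.
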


As a corollary, the correctable erasure patterns in a $m \times n$ maximally recoverable tensor code with $a$ column and $b$ row parity checks over a field of sufficiently large characteristic are precisely the independent sets in the bipartite rigidity matroid $\mathcal{B}_{m,n}(a,b)$. 

Motivated by applications to information theory, we use Theorem~\ref{thm:equivalence}(\ref{bipartitecase}) to study bipartite rigidity. This strategy is well-suited to understanding the case when $m - a$ is small. We give exact characterizations of the independent sets in $\mathrm{T}_{m,n}(s,r,p)$ for all $p$ when $s \le 3$, see Section~\ref{sec:m-asmall}. When $m -a \le 3$, this gives a characterization of the spanning sets in $\mathcal{B}_{m,n}(a,b)$. In particular, we are able to prove the following theorem. 

\begin{theorem}\label{thm:smallm-a}
If $m-a \le 3$, then there is a deterministic algorithm to compute the rank function of $\mathcal{B}_{m,n}(a,b)$ which runs in time polynomial in $m + n$. 
\end{theorem}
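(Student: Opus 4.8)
The plan is to pass to the dual matroid and then feed the combinatorial structure of small-$s$ tensor matroids into standard matroid-optimization algorithms. By Theorem~\ref{thm:equivalence}(\ref{bipartitecase}), $\mathcal{B}_{m,n}(a,b)$ is the dual of $\mathrm{T}_{m,n}(m-a,n-b,0)$, and for any matroid $\mathrm M$ on a ground set $E$ the rank functions of $\mathrm M$ and $\mathrm M^{*}$ satisfy $r_{\mathrm M^{*}}(S)=|S|-r_{\mathrm M}(E)+r_{\mathrm M}(E\setminus S)$. Here $E=[m]\times[n]$, $r_{\mathrm M}(E)=(m-a)(n-b)$ (all the tensors $u_i\otimes v_j$ span $U\otimes V$), and $r_{\mathrm M}(E\setminus S)$ is just another evaluation of the rank function of $\mathrm{T}_{m,n}(m-a,n-b,0)$. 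So the whole statement reduces to the assertion that, when $s=m-a\le 3$, the rank function of $\mathrm{T}_{m,n}(s,r,0)$ can be evaluated in time polynomial in $m+n$.

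For this I would use the explicit description of the independent sets of $\mathrm{T}_{m,n}(s,r,0)$ for $s\le 3$ established in Section~\ref{sec:m-asmall} (which is carried out for every characteristic, in particular $p=0$). I expect this description to take the ``sparsity-count'' shape familiar from rigidity theory: identifying $S\subseteq[m]\times[n]$ with its bipartite graph, $S$ should be independent if and only if $|S'|\le \rho(S')$ for every $S'\subseteq S$, where $\rho$ is an explicit function of the sub-configuration $S'$ (assembled from its degree sequence, the numbers of rows and columns it meets, and the parameters $s,r$). The key point is that such a $\rho$ is essentially built out of truncations of modular functions, so $\rho(S')-|S'|$ is submodular; hence independence of a prescribed $S$ can be tested by a single submodular-function minimization, or, in the cases at hand, by a network-flow computation on an auxiliary graph of size $O(m+n+|S|)$.

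Once independence can be tested in polynomial time, the rank $r_{\mathrm T}(S)=\max\{|S'|:S'\subseteq S\text{ independent}\}$ is computed by the usual matroid greedy algorithm with $O(|S|)$ oracle calls. (Equivalently, a count description yields a Nash--Williams/Lorea-type min--max formula $r_{\mathrm T}(S)=\min_{\mathcal P}\sum_{P\in\mathcal P}\rho(P)$ over partitions $\mathcal P$ of $S$, which one evaluates in polynomial time via the matroid union algorithm.) Substituting the two values $r_{\mathrm T}([m]\times[n])$ and $r_{\mathrm T}(E\setminus S)$ into the duality formula then outputs $r_{\mathcal{B}_{m,n}(a,b)}(S)$ within the claimed time bound.

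The real difficulty sits entirely upstream, in Section~\ref{sec:m-asmall}: one must prove the combinatorial characterization of $\mathrm{T}_{m,n}(s,r,0)$ for $s=2$ and especially $s=3$, i.e.\ show that the manifestly necessary ``no over-full sub-configuration'' conditions are also sufficient for linear independence of the tensors $u_i\otimes v_j$ --- presumably via a deletion--contraction/inductive argument or a direct analysis of the syzygy matrix, where the $s=3$ case is the genuinely delicate one. Granting that characterization, Theorem~\ref{thm:smallm-a} is a short corollary obtained by combining it with matroid duality and classical matroid algorithmics, and I would expect the paper's proof to be correspondingly brief.
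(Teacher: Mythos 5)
Your high-level plan is the same as the paper's: by Theorem~\ref{thm:equivalence}(\ref{bipartitecase}), reduce the rank computation for $\mathcal{B}_{m,n}(a,b)$ to the dual tensor matroid $\mathrm{T}_{m,n}(s,r,0)$ with $s=m-a\le 3$, use the combinatorial characterizations established in Section~\ref{sec:m-asmall}, and then invoke standard matroid algorithmics (dual-rank formula plus greedy, or, as the paper does, a polynomial-time spanning-set oracle plus the general oracle-conversion result it cites). You are also right that the theorem is a short corollary of the Section~\ref{sec:m-asmall} characterizations.

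Where you diverge from the paper is in your guess about the \emph{shape} of the $s\le 3$ characterization, which leads you to propose unnecessary machinery. You anticipate a uniform sparsity-count description ``$|S'|\le\rho(S')$ for all $S'\subseteq S$'' and hence invoke submodular-function minimization or network flow to test independence. The actual characterizations (Corollary~\ref{lem:r=1}, Proposition~\ref{lem:m-a=2}, Proposition~\ref{lem:m-a=3}) are not of that universally-quantified form: they are finite families of explicit inequalities indexed by tuples of rows (e.g., triple-intersection conditions for $s=2$, quadruple-intersection and $S_2,S_3$-conditions for $s=3$), of which there are only $O(m^4)$. So independence in $\mathrm{T}_{m,n}(s,r,0)$ can be tested directly by enumeration, with no optimization step, and the paper's proof correspondingly just says ``check polynomially many conditions'' and cites an oracle-equivalence result. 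Your route would also work if the characterization happened to be a count matroid, but it isn't, and the simpler direct argument is what the paper uses. Note also that Corollary~\ref{cor:laman} shows that not all circuits for $m-a=3$ are Laman circuits, which is further evidence that the pure sparsity-count framework you imagined does not capture the $s=3$ case.
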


Note that there are a few polynomial time algorithms to compute the rank function of $\mathcal{B}_{m,n}(a,b)$ when $a = 1$, including ones based on the maximum flow problem \cite{bgm2022mds}, total dual integral programs \cite{bgm2023generic}, and invariant theory \cite{bgm2023generic}. See also \cite{whiteley1989matroid}.

The rank of $\mathcal{B}_{m,n}(a,b)$ is $an + bm - ab$. In particular, any graph with more than $an + bm - ab$ edges must be dependent in $\mathcal{B}_{m,n}(a,b)$. Furthermore, for each subset $S \subseteq [m]$ and $T \subseteq [n]$, if we restrict $\mathcal{B}_{m,n}(a,b)$ to the edges $S \times T$, then we obtain $\mathcal{B}_{|S|, |T|}(a, b)$. See \cite{kalai2016bipartite}*{Lemma 3.7}.
In particular, if $G$ is independent in $\mathcal{B}_{m,n}(a,b)$ and $|S| \ge a$, $|T| \ge b$, then $G$ must have at most $|T|a + |S|b - ab$ edges in $S \times T$. 
  This observation gives rise to the following family of circuits, i.e., minimal dependent sets. 

\begin{definition}
A circuit $C$ of $\mathcal{B}_{m,n}(a,b)$ is a \emph{Laman circuit} if the edges of $C$ are contained in $S \times T \subseteq [m] \times [n]$, and $C$ has more than $|T|a + |S|b - ab$ edges.
\end{definition}

The name ``Laman circuits'' comes from the Pollaczek-Geiringer--Laman theorem \cites{PollaczekGeiringer,Laman}, which shows that all circuits of the graph rigidity matroid in $\mathbb{R}^2$ are Laman circuits. Laman circuits are called ``regularity'' conditions in the information theory literature \cite{gopalan2017maximally}. These authors showed that the Laman circuits completely describe $\mathrm{T}_{m,n}(m-a, n-b, p)$ when $a=1$. For all three rigidity problems mentioned above, there are typically circuits which are not Laman circuits. Theorem~\ref{thm:smallm-a} allows us to characterize exactly when all circuits of $\mathcal{B}_{m,n}(a,b)$ are Laman circuits.

\begin{example}\label{ex:nonlaman}\cite{kalai2016bipartite}*{Example 5.5}\cite{holzbaur2021correctable}*{Lemma 4}
The subset of $[5] \times [5]$ depicted as blue $\star$ in Figure~\ref{fig:nonlaman} is a circuit of $\mathcal{B}_{5,5}(2,2)$ which is not a Laman circuit. This is most easily seen from the perspective of low-rank matrix completion. If we fill in the entries corresponding to the elements of the circuit with generic complex numbers, then there will be two conflicting conditions on the $(3,3)$ entry. The complement of this circuit is dependent in $\mathrm{T}_{5,5}(3,3,p)$ for any $p$, as two $2 \times 2$-dimensional tensors in a $3 \times 3$-dimensional space necessarily intersect.
\end{example}

\begin{figure}[ht]
\begin{center}
\begin{tikzpicture}
\newcommand{\cbb}{\Huge $\star$}
\newcommand{\cbr}{\Huge $\diamond$}

\fill[blue!40!white] (0,0) rectangle (3,2);
\fill[blue!40!white] (0,0) rectangle (2,3);
\fill[blue!40!white] (5,5) rectangle (3,2);
\fill[blue!40!white] (5,5) rectangle (2,3);

\fill[red!50!white] (0,5) rectangle (2,3);
\fill[red!50!white] (5,0) rectangle (3,2);

\draw (0.5,0.5) node {\cbb};
\draw (0.5,1.5) node {\cbb};
\draw (0.5,2.5) node {\cbb};
\draw (1.5,0.5) node {\cbb};
\draw (1.5,1.5) node {\cbb};
\draw (1.5,2.5) node {\cbb};
\draw (2.5,0.5) node {\cbb};
\draw (2.5,1.5) node {\cbb};

\draw (4.5,4.5) node {\cbb};
\draw (4.5,3.5) node {\cbb};
\draw (4.5,2.5) node {\cbb};
\draw (3.5,4.5) node {\cbb};
\draw (3.5,3.5) node {\cbb};
\draw (3.5,2.5) node {\cbb};
\draw (2.5,4.5) node {\cbb};
\draw (2.5,3.5) node {\cbb};

\draw (4.5,0.5) node {\cbr};
\draw (4.5,1.5) node {\cbr};
\draw (3.5,0.5) node {\cbr};
\draw (3.5,1.5) node {\cbr};

\draw (0.5,4.5) node {\cbr};
\draw (0.5,3.5) node {\cbr};
\draw (1.5,4.5) node {\cbr};
\draw (1.5,3.5) node {\cbr};

\draw (0,0) grid (5,5);
\end{tikzpicture}
\end{center}
\caption{A circuit of $\mathcal{B}_{5,5}(2,2)$ which is not a Laman circuit. The squares of the $5 \times 5$ grid represent the ground set of the matroid, with the blue $\star$ squares representing the circuit elements. The red $\diamond$ squares form the corresponding circuit in $\mathrm{T}_{5,5}(3,3,p)$ for any $p$.}\label{fig:nonlaman}
\end{figure}

\begin{corollary}\label{cor:laman}
All circuits in $\mathcal{B}_{m,n}(a,b)$ are Laman circuits if and only if at least one of the following holds: (1) $a \le 1$, (2) $b \le 1$, (3) $m-a \le 2$, or (4) $n - b \le 2$. 
\end{corollary}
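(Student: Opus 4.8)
The plan is to prove the two implications separately.

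For the ``if'' direction, suppose one of (1)--(4) holds. When $a \le 1$, combining the theorem of Gopalan et al.~\cite{gopalan2017maximally} (that the Laman circuits completely describe $\mathrm{T}_{m,n}(m-1,n-b,p)$) with Theorem~\ref{thm:equivalence}(\ref{bipartitecase}) shows that every circuit of $\mathcal{B}_{m,n}(1,b)$ is a Laman circuit, and $\mathcal{B}_{m,n}(0,b)$ is trivial; the case $b \le 1$ follows by the transpose symmetry $(m,a)\leftrightarrow(n,b)$ of $\mathcal{B}_{m,n}(a,b)$. When $m-a=0$ the matroid is free and has no circuits, and when $m-a\in\{1,2\}$ (and likewise $n-b\in\{1,2\}$) one uses the explicit combinatorial description of the independent sets of $\mathrm{T}_{m,n}(s,r,p)$ for $s\le 2$ from Section~\ref{sec:m-asmall}, which via Theorem~\ref{thm:equivalence}(\ref{bipartitecase}) yields the circuits of $\mathcal{B}_{m,n}(a,b)$, and checks directly that they are all Laman. (For $m-a=1$ this can also be seen by hand: $\mathrm{T}_{m,n}(1,r,p)$ is the rank-$r$ uniform matroid on $[n]$ with each column blown up into a parallel class, so its hyperplanes are the sets $[m]\times J$ with $|J|=r-1$, whence the circuits of the dual $\mathcal{B}_{m,n}(m-1,b)$ are the sets $[m]\times K$ with $|K|=b+1$, which are Laman.)

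For the ``only if'' direction we show that if $a,b\ge 2$, $m-a\ge 3$ and $n-b\ge 3$, then $\mathcal{B}_{m,n}(a,b)$ has a non-Laman circuit. By \cite{kalai2016bipartite}*{Lemma 3.7} the restriction of $\mathcal{B}_{m,n}(a,b)$ to a box $S\times T$ is $\mathcal{B}_{|S|,|T|}(a,b)$, so any circuit supported on $S\times T$ is a circuit of $\mathcal{B}_{m,n}(a,b)$; and since the Laman bound $|T'|a+|S'|b-ab$ is monotone in $|S'|,|T'|$, such a circuit $C$ is non-Laman exactly when $|C|\le|T|a+|S|b-ab$ for the \emph{smallest} box $S\times T$ containing it. It therefore suffices to produce a non-Laman circuit in $\mathcal{B}_{m',n'}(a,b)$ for $m'=a+3\le m$ and $n'=b+3\le n$. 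By Theorem~\ref{thm:equivalence}(\ref{bipartitecase}) this matroid is dual to $\mathrm{T}_{m',n'}(3,3,0)$, realized by generic rank-one tensors $u_i\otimes v_j\in U\otimes V$ with $\dim U=\dim V=3$; so, dually, we look for a corank-one flat $H$ of $\mathrm{T}_{m',n'}(3,3,0)$. Generalizing Example~\ref{ex:nonlaman}, we take
\[
H = \bigl(\{m'-1,m'\}\times\{1,2\}\bigr) \cup \bigl(\{1,2\}\times\{n'-1,n'\}\bigr) \cup \{(3,3)\},
\]
a set of nine elements.

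The key step is to prove $H$ is a flat of rank $8$. The tensors indexed by the first block span $W_1:=\langle u_{m'-1},u_{m'}\rangle\otimes\langle v_1,v_2\rangle$ and those indexed by the second span $W_2:=\langle u_1,u_2\rangle\otimes\langle v_{n'-1},v_{n'}\rangle$; using the identity $(A_1\otimes B_1)\cap(A_2\otimes B_2)=(A_1\cap A_2)\otimes(B_1\cap B_2)$ and the fact that two generic $2$-planes in a $3$-space meet in a line, $\dim(W_1\cap W_2)=1$, so these eight tensors have rank $7$, and a short genericity computation shows $u_3\otimes v_3\notin W_1+W_2$, whence $P:=\Span(H)$ has dimension $8$. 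To see $P$ is a flat, represent a generator of $P^\perp\subseteq U^*\otimes V^*$ by a bilinear form $M_0$ on $U\times V$; the defining conditions force $M_0$ to have rank $2$ with image the plane $\langle\lambda,\mu\rangle\subseteq U^*$, where $\lambda,\mu$ span the annihilators of $\langle u_{m'-1},u_{m'}\rangle$ and $\langle u_1,u_2\rangle$, and computing $M_0 v_j$ column by column shows $M_0(u_i\otimes v_j)=0$ precisely for $(i,j)\in H$. I expect this last verification --- that $M_0$ vanishes on no ``unexpected'' rank-one tensor --- to be the main obstacle; it is exactly where genericity of the $u_i,v_j$ enters, and for $a=b=2$ it is the content of Example~\ref{ex:nonlaman}.

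Granting the claim, $C:=([m']\times[n'])\setminus H$ is a cocircuit of $\mathrm{T}_{m',n'}(3,3,0)$, hence a circuit of $\mathcal{B}_{m',n'}(a,b)$. Its row- and column-support is all of $[m']\times[n']$ (since $m',n'\ge 5$, every row and column meets $C$), and
\[
|C| = m'n'-9 = (a+3)(b+3)-9 = ab+3a+3b = (b+3)a+(a+3)b-ab,
\]
the right-hand side being the Laman bound for the box $[m']\times[n']$; since $|C|$ does not exceed it, $C$ is not a Laman circuit. Viewing $C$ inside $[m]\times[n]$ via $[m']\times[n']\subseteq[m]\times[n]$ then gives a non-Laman circuit of $\mathcal{B}_{m,n}(a,b)$, completing the proof.
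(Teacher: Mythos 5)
Your ``if'' direction follows the paper's (small $m-a$ description plus the Whiteley/Gopalan $a=1$ result). The ``only if'' direction is a genuinely different route. The paper takes the non-Laman circuit $G$ of Example~\ref{ex:nonlaman}, applies $a-2$ left cones and $b-2$ right cones via Corollary~\ref{cor:cone} to get a dependent set $\mathcal{H}\subseteq[a+3]\times[b+3]$ that is asserted to contain no Laman circuit, and restricts. You instead work on the dual side: you build a corank-one flat $H$ of $\mathrm{T}_{a+3,b+3}(3,3,0)$ whose complement $C$ is therefore a cocircuit of $\mathrm{T}$, hence a circuit of $\mathcal{B}_{a+3,b+3}(a,b)$, and the non-Laman conclusion is the one-line count $|C|=ab+3a+3b$, which equals the Laman bound. (Up to a row/column permutation, your $C$ and the paper's coned $\mathcal{H}$ are the same set, and for $a=b=2$ your $H$ is exactly the complement of $G$ in $[5]\times[5]$.) What you gain: you produce a genuine circuit, not merely a dependent set with no Laman circuit, and you avoid the combinatorial check that the coned $\mathcal{H}$ satisfies $|\mathcal{H}\cap S\times T|\le|T|a+|S|b-ab$ on every subrectangle, which the paper does not spell out. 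What you trade away is the genericity verification that $H$ is a flat, i.e., that the generator $M_0$ of $\operatorname{Span}(H)^{\perp}$ vanishes on no $u_i\otimes v_j$ with $(i,j)\notin H$. You flag this yourself and it is indeed the one real step still to be written out, but it does go through: $M_0\colon V\to U^*$ has kernel $\langle v_1,v_2\rangle\cap\langle v_{n'-1},v_{n'}\rangle$ and image $A_1^{\perp}+A_2^{\perp}$, from which $\langle M_0 v_j,u_i\rangle$ vanishes exactly on $H$ by inspection, using genericity of the remaining $u_i,v_j$.
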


The case when $a \in\{0, m\}$ or  $b \in \{0, n\}$ is trivial, and the case when $a=1$ or $b=1$ was proven in \cite{whiteley1989matroid} and independently in \cite{gopalan2017maximally}. 
Note that we are able to describe $\mathcal{B}_{m,n}(a,b)$ when $m-a =3$ even though the Laman condition usually fails. To do this, we find an additional family of combinatorial inequalities which rule out circuits such as the one in Example~\ref{ex:nonlaman}. 

\medskip

Applications to information theory make use of $\mathrm{T}_{m,n}(s,r,p)$ when $p > 0$. While the bases of $\mathrm{T}_{m,n}(s,r,p)$ are bases of $\mathrm{T}_{m,n}(s,r,0)$ (Proposition~\ref{prop:uppersemi}), it is not obvious that these matroids are equal. Indeed, \cite{Anderson}*{Example 2.18} shows that $\mathrm{S}_4(2, 2) \not= \mathrm{S}_4(2,0)$, so the symmetric power matroid depends on the characteristic. Some of the proofs of the Pollaczek-Geiringer--Laman theorem, such as the ones in \cites{LovaszYemini,bell2023kapranov}, show that the rank of the matrices considered in $2$-dimensional graph rigidity do not depend on the characteristic. 
We show that the tensor matroid does not depend on the characteristic in several cases.

\begin{theorem}\label{thm:char}
If $s \le 3,$ $m - s \le 1$, or $m - s = n - r = 2$, then $\mathrm{T}_{m,n}(s,r,p) = \mathrm{T}_{m,n}(s,r,0)$. 
\end{theorem}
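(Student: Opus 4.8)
The plan is to show, in each of the three cases, that $\operatorname{rk}_p(E) = \operatorname{rk}_0(E)$ for every $E \subseteq [m] \times [n]$, where $\operatorname{rk}_q$ denotes the rank function of $\mathrm{T}_{m,n}(s,r,q)$. The inequality $\operatorname{rk}_p(E) \le \operatorname{rk}_0(E)$ is Proposition~\ref{prop:uppersemi}, so the content is the reverse inequality. Choosing a subset $E_0 \subseteq E$ that is a basis of $E$ in $\mathrm{T}_{m,n}(s,r,0)$, it is enough to prove that $E_0$ is still independent over a field of characteristic $p$, i.e.\ that the $|E_0| \times |E_0|$ minor of the matrix $[\,u_i \otimes v_j\,]$ indexed by $E_0$ does not vanish identically modulo $p$. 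This minor is a polynomial with integer coefficients in the coordinates of the generic vectors, and it is not identically zero over $\Q$ (that is what $E_0$ being a basis in characteristic $0$ means), so it suffices to show its content is $1$, i.e.\ that its integer coefficients share no common prime factor. I would obtain this from the explicit independent families used to prove the combinatorial characterizations cited below, whose defining determinants are products of Vandermonde-type factors and are hence primitive as polynomials in the free parameters; this reduces the whole theorem to checking that those characterizations are insensitive to $p$.

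For $s \le 3$ this is essentially automatic: the characterization of the independent sets of $\mathrm{T}_{m,n}(s,r,p)$ in Section~\ref{sec:m-asmall} is a purely combinatorial condition that does not mention $p$, and the proof there certifies independence of a maximal set by an explicit submatrix whose determinant is a product of Vandermonde-like factors, together with a few small explicit determinants (with leading coefficient $\pm 1$) for the exceptional configurations that rule out non-Laman circuits when $s = 2,3$. Hence the characterization, and the matroid, do not depend on $p$. The case $m - s \le 1$ is classical and I would handle it directly. If $m = s$, a generic $m$-tuple is a basis of $U$, so $U \otimes V = \bigoplus_i u_i \otimes V$ and $\mathrm{T}_{m,n}(m,r,p)$ is the direct sum of $m$ copies of the rank-$r$ uniform matroid on $n$ elements, which is independent of $p$. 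If $m - s = 1$, then \cite{gopalan2017maximally} showed that the circuits of $\mathrm{T}_{m,n}(m-1,n-b,p)$ are exactly the Laman circuits for every $p$, and a matroid is determined by its circuits, so again there is no dependence on $p$.

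The remaining case $m - s = n - r = 2$ is where the real work lies: genuine non-Laman circuits occur here (Example~\ref{ex:nonlaman}), and no characteristic-free description was available before. Here I would use the matrix-completion reformulation. Realizing the $u_i$ and $v_j$ as images of standard basis vectors under generic surjections $\mathbf{k}^m \to U$ and $\mathbf{k}^n \to V$ with $2$-dimensional kernels $K$ and $L$, the set $E$ is dependent in $\mathrm{T}_{m,n}(m-2,n-2,p)$ precisely when there is a nonzero matrix $C$ supported on $E$ of the form $C = A + B$, where $A$ has rank $\le 2$ with column space contained in $K$ and $B$ has rank $\le 2$ with row space contained in $L$. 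This presents $\operatorname{rk}_p(E)$ as the rank of an explicit matrix whose entries are polynomials in the coordinates of $K$ and $L$. The plan is then to establish a combinatorial formula for this rank in characteristic $0$ — the Laman inequalities together with the extra family of inequalities that excludes configurations like the one in Example~\ref{ex:nonlaman} — and to show that the bound is already attained for integral choices of $K$ and $L$, with the certifying minor of content $1$.

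I expect the main obstacle to be exactly this last step: showing that the ``extra'' inequalities, which constrain how the supports of matrices $C = A + B$ (with $A,B$ of rank $\le 2$ and constrained column, resp.\ row, spaces) can look, can all be made tight simultaneously with the Laman inequalities over $\Z$. This requires a fairly detailed combinatorial analysis of such supports — the same analysis underlying the new description of $\mathcal{B}_{m,n}(2,2)$ — and is likely to be the longest part of the argument, whereas the $s \le 3$ and $m - s \le 1$ cases should be short once the corresponding characterizations are available.
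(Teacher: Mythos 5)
Your overall framework is sound: Proposition~\ref{prop:uppersemi} gives $\operatorname{rk}_p \le \operatorname{rk}_0$, so the task reduces to showing that every characteristic-$0$ basis has some maximal minor whose defining integer polynomial is not identically zero modulo $p$ (your ``content one'' criterion). Your treatment of $s \le 3$ (the combinatorial characterizations in Section~\ref{sec:m-asmall} are insensitive to $p$) and of $m - s \le 1$ (direct sum of uniform matroids when $m = s$; the Laman characterization of \cite{gopalan2017maximally} when $m - s = 1$) matches the paper's proof.

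The gap is in the case $m - s = n - r = 2$. You propose to ``establish a combinatorial formula for this rank in characteristic $0$ --- the Laman inequalities together with the extra family of inequalities that excludes configurations like the one in Example~\ref{ex:nonlaman}.'' No such inequality-based rank formula for $\mathcal{B}_{m,n}(2,2)$ is known; the paper explicitly points out that even a coNP certificate for dependence in $\mathcal{B}_{m,n}(2,2)$ is open, which already precludes a simple min-over-inequalities description. The only available characterization of independence is Bernstein's Theorem~\ref{thm:Bernstein}, which is stated in terms of acyclic orientations with no alternating cycles, not in terms of inequalities, and it is not a rank formula. The paper's Proposition~\ref{prop:char-free-Bernstein} supplies the missing ingredient: starting from an orientation certifying independence, it builds an explicit $2$-colored directed graph $G_\sigma$ (edges colored by direction, then re-oriented within each monochromatic tree toward a chosen root), and shows that the associated monomial appears in the relevant maximal minor with coefficient $\pm 1$ and cannot be produced by any other term --- any collision would force an alternating cycle, contradicting the hypothesis. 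This furnishes exactly the content-$1$ certificate your plan calls for, but it is extracted directly from the Bernstein orientation rather than from an inequality-based rank formula; the latter step in your proposal would fail as written because the formula you would need to ``establish'' is not known to exist.
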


One of the deepest results on bipartite rigidity is the following theorem of Bernstein. The proof crucially uses the interpretation of bipartite rigidity in terms of low-rank matrix completion to reformulate the problem in terms of tropical geometry. Bernstein then uses several ingenious ideas to obtain the following combinatorial characterization of the independent sets of the $(2, 2)$ bipartite rigidity matroid. 

\begin{theorem}\cite{bernstein2017completion}\label{thm:Bernstein}
A bipartite graph is independent in $\mathcal{B}_{m,n}(2,2)$ if and only if it has an edge orientation with no directed cycles or alternating cycles. 
\end{theorem}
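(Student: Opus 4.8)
\emph{Proof proposal.} The plan is to use the identification (recorded in the excerpt) of $\mathcal{B}_{m,n}(2,2)$ with the algebraic matroid of the determinantal variety $\mathcal{V}\subseteq\mathbb{A}^{mn}$ of $m\times n$ complex matrices of rank at most $2$, and to compute this matroid tropically. For a bipartite graph $G\subseteq[m]\times[n]$, write $\pi_G$ for the coordinate projection onto the entries indexed by $G$. Then the rank of $G$ in $\mathcal{B}_{m,n}(2,2)$ equals $\dim\overline{\pi_G(\mathcal{V})}$, and because tropicalization preserves dimension and commutes with coordinate projections of subvarieties of a torus, this also equals $\dim\pi_G(\operatorname{Trop}\mathcal{V})$; so $G$ is independent exactly when $\pi_G(\operatorname{Trop}\mathcal{V})=\mathbb{R}^G$. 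The starting point is therefore the structure of $\operatorname{Trop}\mathcal{V}$: by the theory of tropical rank (Develin--Santos--Sturmfels), $\operatorname{Trop}\mathcal{V}$ is the locus of tropical matrices of tropical (equivalently Kapranov) rank at most $2$, which is precisely the set of matrices of the form $D_{ij}=\min(X_i+Y_j,\,Z_i+W_j)$ with $X,Z\in\mathbb{R}^m$ and $Y,W\in\mathbb{R}^n$. (That every such $D$ lies in $\operatorname{Trop}\mathcal{V}$ is immediate by lifting $X,Y,Z,W$ to valuations of generic Puiseux series and forming $xy^{\mathsf{T}}+zw^{\mathsf{T}}$.)

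Second, I would stratify $\operatorname{Trop}\mathcal{V}$ by which of the two terms achieves the minimum. For $D=\min(X_i+Y_j,\,Z_i+W_j)$, set $a_i=X_i-Z_i$, $b_j=Y_j-W_j$, and let $F=\{(i,j): a_i+b_j<0\}$ be the set on which the first term strictly wins; this $F$ is always a \emph{threshold} bipartite graph (a Young-diagram shape after sorting the $a_i$ and $b_j$), and the locus with a fixed generic $F$ is the relative interior of a cone $\sigma_F$ whose linear span is the image of the linear map $(X,Y,Z,W)\mapsto\big(X_i+Y_j\text{ on }F,\;Z_i+W_j\text{ on }F^c\big)$. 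A short linear-algebra computation then shows that $\sum_{(i,j)\in G}\lambda_{ij}D_{ij}$ vanishes on $\operatorname{span}(\sigma_F)$ iff $\lambda$ restricted to $G\cap F$ lies in the cycle space of $G\cap F$ and $\lambda$ restricted to $G\cap F^c$ lies in the cycle space of $G\cap F^c$; hence $\pi_G(\sigma_F)$ spans $\mathbb{R}^G$ iff both $G\cap F$ and $G\setminus F$ are forests. Combining with the first paragraph: $G$ is independent in $\mathcal{B}_{m,n}(2,2)$ iff there is a threshold bipartite graph $F$ on $[m]\times[n]$ such that $G\cap F$ and $G\setminus F$ are both forests.

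Third, I would show this ``threshold two-forest'' criterion is the same as the orientation condition in the theorem. Given such an $F$, orient each edge $(i,j)$ of $G$ from $i$ to $j$ if $(i,j)\in F$ and from $j$ to $i$ otherwise. A directed cycle would force the $a_i$ along it to strictly increase (from $a_i+b_j<0$ on $F$ and $a_i+b_j>0$ on $F^c$ at consecutive edges), which is impossible; and a cycle all of whose edges are in $F$ (or all in $F^c$) is exactly what an alternating cycle is for this orientation, and is ruled out because $G\cap F$ and $G\setminus F$ are forests. Conversely, from an orientation with no directed and no alternating cycle, let $P$ be the edges pointing into the right part and $N$ those pointing into the left part; $P$ and $N$ are forests since a cycle inside either would be an alternating cycle, and by linear-programming duality the system $\{a_i+b_j<0\text{ on }P,\;a_i+b_j>0\text{ on }N\}$ is feasible iff $G$ carries no nonzero circulation supported conformally along the orientation, i.e.\ no directed cycle; picking $a,b$ from a feasible point and $F=\{(i,j):a_i+b_j<0\}$ recovers a threshold graph splitting $G$ into the two forests $P$ and $N$.

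The step I expect to be the main obstacle is the tropical input underlying the first paragraph: identifying $\operatorname{Trop}\mathcal{V}$ with the tropical-rank-at-most-$2$ locus, identifying the latter with the image of the tropical rank-$2$ parametrization, and describing the fan structure precisely enough to know that its maximal cones are exactly the $\sigma_F$ for threshold $F$. This rests on the Develin--Santos--Sturmfels theory and on the equality of tropical and Kapranov rank, which holds for rank $2$ but genuinely fails for higher rank, and it is where characteristic $0$ enters. Once that is in hand, the remaining steps -- the cycle-space computation and the combinatorial translation into orientations -- are elementary graph theory and linear programming, though one must be slightly careful with the degenerate boundary strata where the sign pattern $F$ is not generic and with isolated vertices.
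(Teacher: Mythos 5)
This theorem is cited from \cite{bernstein2017completion}; the paper does not reprove it. The paper's own novel contribution with respect to this statement is an elementary, characteristic-free proof of the ``if'' direction only (Proposition~\ref{prop:char-free-Bernstein}), obtained by exhibiting a maximal minor of the rigidity matrix with a $\pm 1$ monomial, via a combinatorial argument about colored acyclic orientations. That argument is entirely different in flavor from the tropical route you propose.

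Your proposal is, in effect, a sketch of Bernstein's original tropical-geometry proof. The ``if'' direction of your sketch (orientation $\Rightarrow$ threshold two-forest split $\Rightarrow$ full-rank projection on a Barvinok cone $\Rightarrow$ independence) is essentially sound, modulo the usual care about tropicalizing inside a torus. The genuine gap is in the ``only if'' direction, and it is exactly where you flagged uncertainty. You assert in the first paragraph that $\operatorname{Trop}\mathcal{V}$ ``is precisely the set of matrices of the form $D_{ij}=\min(X_i+Y_j,\,Z_i+W_j)$,'' i.e., that the tropical (= Kapranov) rank $\le 2$ locus equals the Barvinok rank $\le 2$ locus. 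This is not correct: tropical rank $\le$ Kapranov rank $\le$ Barvinok rank, and the last inequality is strict in general even for tropical rank $2$. The Barvinok cones $\sigma_F$ you stratify are a proper sub-fan of $\operatorname{Trop}\mathcal{V}$. The paper makes this explicit in the remark following Conjecture~\ref{conj:generalizedbernstein}: the Barvinok rank $d$ cones ``are a subset of cones in the tropical determinantal variety,'' and the assertion that this sub-fan already determines the matroid $\mathcal{B}_{m,n}(d,d)$ is precisely the nontrivial content that Bernstein proved for $d=2$ and that remains conjectural for $d\ge 3$. Consequently, knowing that $\pi_G$ is full-dimensional on \emph{some} maximal cone of $\operatorname{Trop}\mathcal{V}$ does not, by your stratification alone, produce a threshold graph $F$ with $G\cap F$ and $G\setminus F$ forests; you would need an additional argument that a non-Barvinok cone on which $\pi_G$ is full rank can always be traded for a Barvinok cone on which it also is. That step is where Bernstein's ``ingenious ideas'' live, and your proposal currently elides it.

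Two smaller remarks: the reduction ``independent $\iff \pi_G(\operatorname{Trop}\mathcal{V})=\mathbb{R}^G$'' needs $\mathcal{V}$ to be replaced by its very affine part, and the balancing/stable-image formalism is required to make ``tropicalization commutes with coordinate projection'' precise; and in passing from the acyclic, alternating-cycle-free orientation to a feasible point $(a,b)$ one must handle isolated vertices and the boundary strata of non-generic $F$, as you note.
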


That is, $G$ is independent in $\mathcal{B}_{m,n}(2,2)$ if and only if $G$ has an acyclic orientation such that no cycle of $G$ is oriented so that the edges alternate in orientation. 
As part of our proof of Theorem~\ref{thm:char}, we give an elementary proof (\Cref{prop:char-free-Bernstein}) of the sufficiency part of Theorem~\ref{thm:Bernstein}, i.e., if a bipartite graph  $G$ has an edge orientation with no directed cycles or alternating cycles, then it is independent in $\mathcal{B}_{m,n}(2,2)$. Unlike Bernstein's original proof, our argument establishes the stronger statement that $G$ is independent in the dual of $\mathrm{T}_{m,n}(m-2, n-2,p)$ for any $p$.  Together with Proposition~\ref{prop:uppersemi}, this proves the independence of the characteristic in this case. 

As a consequence, we have a combinatorial description of correctable patterns in an $(m, n, a=2, b=2)$ maximally recoverable tensor code.
\begin{proposition}\label{cor:it-Bernstein}
Let $C \subseteq \F^{m \times n}$ be an $(m,n,a=2,b=2)$ maximally recoverable tensor code. An erasure pattern $E \subseteq C$ is correctable if and only if $E$, when viewed as a bipartite graph, has an edge orientation with no directed cycles or alternating cycles.
\end{proposition}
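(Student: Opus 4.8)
The plan is to combine three ingredients already assembled in the paper: (1) the information-theoretic dictionary identifying correctable erasure patterns in a maximally recoverable tensor code with independent sets of the dual of $\mathrm{T}_{m,n}(m-a,n-b,p)$; (2) the characteristic-independence statement from Theorem~\ref{thm:char}, namely $\mathrm{T}_{m,n}(m-2,n-2,p) = \mathrm{T}_{m,n}(m-2,n-2,0)$, which in our range $a=b=2$ is exactly the last listed case $m-s=n-r=2$; and (3) Theorem~\ref{thm:equivalence}(\ref{bipartitecase}), which identifies the dual of $\mathrm{T}_{m,n}(m-2,n-2,0)$ with the bipartite rigidity matroid $\mathcal{B}_{m,n}(2,2)$, together with Bernstein's combinatorial characterization (Theorem~\ref{thm:Bernstein}) of independence in $\mathcal{B}_{m,n}(2,2)$.

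First I would recall precisely what ``$E$ is correctable'' means: the servers form an $m\times n$ grid, each column constrained to lie in a generic subspace of codimension $a=2$ and each row in a generic subspace of codimension $b=2$, and $E$ is correctable exactly when the erased coordinates are a function of the surviving ones, i.e.\ when $E$ (viewed as a subset of the $mn$ ground-set elements) is independent in the recoverability matroid, which is the matroid dual of $\mathrm{T}_{m,n}(m-2,n-2,p)$. This is the statement quoted in the Connections to Information Theory paragraph; I would spell out the tensor-product-of-vectors description of $\mathrm{T}_{m,n}(m-a,n-b,p)$ and note that ``generic choice of subspaces'' is exactly the genericity hypothesis under which the recoverability matroid equals this fixed matroid.

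Next I would chain the identifications. By Theorem~\ref{thm:char} (case $m-s=n-r=2$), $\mathrm{T}_{m,n}(m-2,n-2,p)$ does not depend on $p$, so its dual does not either; hence $E$ is correctable over $\F$ of characteristic $p$ if and only if $E$ is independent in the dual of $\mathrm{T}_{m,n}(m-2,n-2,0)$. By Theorem~\ref{thm:equivalence}(\ref{bipartitecase}) with $a=b=2$, this dual is the bipartite rigidity matroid $\mathcal{B}_{m,n}(2,2)$. Finally, by Theorem~\ref{thm:Bernstein}, $E$ is independent in $\mathcal{B}_{m,n}(2,2)$ if and only if the bipartite graph $E$ admits an edge orientation with no directed cycles and no alternating cycles. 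Concatenating these equivalences yields the claim.

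The only subtlety—the part I would be most careful about—is making sure the genericity conventions match up across the three sources: Theorem~\ref{thm:equivalence} is stated in characteristic $0$, the information-theoretic setup lives over a large finite field, and Bernstein's theorem is a statement about $\mathcal{B}_{m,n}(2,2)$ as an abstract matroid. The bridge is precisely Theorem~\ref{thm:char}, which collapses the characteristic dependence, so the logical skeleton is a short chain of ``if and only if''s once that is invoked; there is no new combinatorics or algebra to do here beyond quoting the results. I would therefore present the proof as essentially a one-paragraph deduction, perhaps with a sentence reminding the reader why ``correctable'' translates to ``independent in the dual tensor matroid.''
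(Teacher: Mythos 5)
Your proposal is correct and is essentially the same deduction the paper has in mind: translate ``correctable'' to ``independent in the dual of $\mathrm{T}_{m,n}(m-2,n-2,p)$,'' use Theorem~\ref{thm:char} (equivalently, the pair Proposition~\ref{prop:char-free-Bernstein} plus Proposition~\ref{prop:uppersemi}, which is what the paper actually bundles into that theorem for the $m-s=n-r=2$ case) to erase the dependence on $p$, then apply Theorem~\ref{thm:equivalence}(\ref{bipartitecase}) and Theorem~\ref{thm:Bernstein}. The paper leaves this chain implicit in the paragraph preceding the proposition, and your write-up simply makes it explicit.
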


Despite the combinatorial nature of the description in Theorem~\ref{thm:Bernstein}, we do not know a polynomial time algorithm to check independence in $\mathcal{B}_{m,n}(2,2)$. We do not even know a coNP certificate, i.e., a certificate that a graph is \emph{not} independent in $\mathcal{B}_{m,n}(2,2)$, that can be checked in polynomial time. A candidate coNP certificate for independence in $\mathcal{B}_{m,n}(a,b)$ is given in \cite{JacksonTanigawa}*{Conjecture 6.4}. 

Finally, we give a conjectural description of the bases of $\mathcal{B}_{m,n}(d, d)$ for all $d$, generalizing Theorem~\ref{thm:Bernstein} (Conjecture~\ref{conj:generalizedbernstein}). Using a ``coning'' operation (Proposition~\ref{prop:cone}), this gives a description of $\mathcal{B}_{m,n}(a,b)$ for all $a, b$. 
We show that our conjecture implies that $\mathrm{T}_{m,n}(s,r,p)$ is independent of $p$.

\begin{table}[ht]
    \centering
    \begin{tabular}{|c|c|}
         \hline
         Cases & Description of $\mathrm{T}_{m,n}(s,r,p)$ \\
         \hline\hline
         $s=1$ or $r=1$ & Corollary~\ref{lem:r=1}\\
         \hline
         $s=2$ or $r=2$ & Proposition~\ref{lem:m-a=2}\\
         \hline
         $s=3$ or $r=3$ & Proposition~\ref{lem:m-a=3}\\
         \hline
         $m-s=1$ or $n-r=1$ & \makecell{\cite[Theorem 4.2]{whiteley1989matroid} \\ \cite[Theorem 3.2]{gopalan2017maximally}}\\
         \hline
         $m-s=n-r=2$ & \makecell{\cite[Theorem 4.4]{bernstein2017completion} for $p=0$ \\ Proposition~\ref{prop:char-free-Bernstein} for $p>0$}\\
         \hline
    \end{tabular}
    \caption{Currently known cases of the structure of the matroid $\mathrm{T}_{m,n}(s,r,p)$. 
    }
    \label{tab:result-table}
\end{table}

\subsubsection*{Acknowledgements}
We thank the referees for their helpful comments. 
The first author was supported by a Microsoft Research PhD Fellowship. The second author was supported by NSF grant DMS-1953807. 
The third author would like to thank Alex Postnikov and Yibo Gao for introducing him to the matrix completion problem. 
The fifth author is supported by an ARCS fellowship. 

\section{Rigidity matroids and their duals}

In this section, we recall the definitions of the rigidity matroids. We then prove Theorem~\ref{thm:equivalence}. 

\subsection{Rigidity matroids}\label{ssec:rigiditydef}

\subsubsection{Symmetric matrix completion}

For $0 \le d \le n$, the \emph{symmetric matrix completion matroid} $\mathcal{S}_n(d)$ is the algebraic matroid realized by the $\binom{n}{2} + n$ coordinate functions on the variety of $n \times n$ symmetric matrices of rank at most $d$. More precisely, let $K$ be the field of rational functions on the variety of $n \times n$ symmetric matrices of rank at most $d$ over $\mathbb{C}$. For each $i \le j$, we have a coordinate function $x_{ij} \in K$. A subset $S \subseteq \binom{[n]}{2} \sqcup [n]$ is independent in $\mathcal{S}_n(d)$ if and only if the corresponding set of coordinate functions is algebraically independent. The dimension of the variety of $n \times n$ symmetric matrices of rank at most $d$ is $nd - \binom{d}{2}$, so the rank of $\mathcal{S}_n(d)$ is $nd - \binom{d}{2}$.  

The restriction of $\mathcal{S}_n(d)$ to $\binom{[n]}{2}$ is the matroid denoted $\mathcal{I}^d_n$ in \cite{JacksonTanigawa}*{Section 6.3}, which was introduced in \cite{KalaiHyperconnectivity}*{Section 8}. If $n = m + p$, then the restriction of $\mathcal{S}_n(d)$ to $[m] \times [p] \subseteq \binom{[n]}{2}$ is $\mathcal{B}_{m,p}(d,d)$. This is most easily seen using the description of $\mathcal{B}_{m,p}(d,d)$ as a \emph{matrix completion matroid}: the projection of the variety of $n \times n$ symmetric matrices of rank at most $d$ onto the northeast $m \times p$ corner is the variety of $m \times p$ matrices of rank at most $d$.

We use a description of $\mathcal{S}_n(d)$ which was derived in \cite{KRT}*{Section 3.2}. See Section~\ref{sec:poschar} for a discussion of an analogous calculation.

\begin{proposition}\label{prop:symmetricmatrix}
Consider the $nd \times (\binom{n}{2} + n)$ matrix $J_{\mathrm{Sym}}$ over the field $\mathbb{C}(x_{ij})_{1 \le i \le n, 1 \le j \le d}$ whose rows are labeled by pairs $(i, j)$ with $1 \le i \le n, 1 \le j \le d$, and whose columns are labeled by either $\{k, \ell\} \in \binom{[n]}{2}$ or $k \in [n]$. 
In each row labeled by $(i, j)$, we have an entry of $2x_{ij}$ in the column labeled by $i \in [n]$, and we have an entry of $x_{kj}$ in the column labeled by $\{i, k\} \in \binom{[n]}{2}$. The other entries are $0$. 
A subset $S \subseteq \binom{[n]}{2} \sqcup [n]$ is independent in $\mathcal{S}_n(d)$ if and only if the columns labeled by $S$ in $J_{\mathrm{Sym}}$ are linearly independent. 
\end{proposition}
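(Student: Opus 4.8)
The plan is to realize the symmetric matrix completion matroid as an algebraic matroid and then compute its Jacobian. Since $\mathcal{S}_n(d)$ is by definition the algebraic matroid on the coordinate functions $x_{ij}$ ($i \le j$) of the variety of symmetric $n \times n$ matrices of rank $\le d$, and a set of functions on an irreducible variety is algebraically independent if and only if their differentials are linearly independent at a generic (smooth) point, it suffices to produce an explicit parametrization of (a dense subset of) this variety, differentiate, and identify the resulting Jacobian matrix with $J_{\mathrm{Sym}}$ after generic specialization of the parameters.

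First I would take the parametrization $M = X X^T$ where $X$ is an $n \times d$ matrix of independent variables $x_{ij}$, $1 \le i \le n$, $1 \le j \le d$; the image is exactly the variety of symmetric rank-$\le d$ matrices (over $\mathbb{C}$, every such matrix factors this way up to a generic change of basis, and the map is dominant since the target has dimension $nd - \binom{d}{2}$ and the fibers of $X \mapsto XX^T$ are orbits of the orthogonal group $O(d)$, of dimension $\binom{d}{2}$). The entry $M_{k\ell} = \sum_{j=1}^d x_{kj} x_{\ell j}$, so for $k < \ell$ the differential is $dM_{k\ell} = \sum_j (x_{\ell j}\, dx_{kj} + x_{kj}\, dx_{\ell j})$, and for the diagonal entry $M_{kk} = \sum_j x_{kj}^2$ we get $dM_{kk} = \sum_j 2 x_{kj}\, dx_{kj}$. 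Writing these differentials in the basis $\{dx_{ij}\}$ (indexed by the $nd$ rows $(i,j)$ of the claimed matrix), the column labeled $\{i,k\}$ has $x_{kj}$ in row $(i,j)$ and $x_{ij}$ in row $(k,j)$, and the column labeled $k$ has $2x_{kj}$ in row $(k,j)$ — precisely the description of $J_{\mathrm{Sym}}$, with all other entries zero.

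Next I would invoke the standard algebraic-matroid criterion: $S$ is independent in $\mathcal{S}_n(d)$ iff the functions $\{M_{k\ell}\}_{(k,\ell) \in S}$ are algebraically independent in the function field iff their differentials are linearly independent over $\mathbb{C}(x_{ij})$ iff the corresponding columns of $J_{\mathrm{Sym}}$ are linearly independent. The only subtlety is to confirm that the generic point of the parametrizing affine space $\mathbb{A}^{nd}$ maps to a generic (hence smooth) point of the rank-$\le d$ variety, so that the Jacobian rank there computes the true transcendence degree; this follows because $X \mapsto XX^T$ is dominant and the transcendence degree of a subfield can be read off the rank of the Jacobian at any point where the parametrization is smooth and dominant, e.g. by base-changing to $\mathbb{C}(x_{ij})$ itself as in \cite{KRT}*{Section 3.2}.

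The main obstacle is not any single computation but making the reduction airtight: one must be careful that the factorization $M = XX^T$ genuinely dominates the rank-$\le d$ variety over $\mathbb{C}$ (this uses algebraic closure and characteristic $0$ — over $\mathbb{R}$ one would only get positive semidefinite matrices, and in characteristic $2$ the $dM_{kk}$ computation degenerates, which is exactly why the positive-characteristic analogue treated in \Cref{sec:poschar} needs a different argument) and that the fiber dimension $\binom{d}{2}$ is accounted for so that the image has the stated dimension $nd - \binom d2$. Once the dominance and the dimension count are in place, matching the differentials to the entries of $J_{\mathrm{Sym}}$ is a direct calculation and the proposition follows.
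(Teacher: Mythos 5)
Your proposal matches the paper's approach: the paper does not prove this proposition in the text but attributes it to \cite{KRT}*{Section 3.2} and sketches the analogous calculation in Section~\ref{sec:poschar}, which is exactly the Jacobian-of-the-parametrization argument you give (parametrize by $M = XX^T$, compute $dM_{k\ell} = \sum_j (x_{\ell j}\, dx_{kj} + x_{kj}\, dx_{\ell j})$ and $dM_{kk} = \sum_j 2x_{kj}\, dx_{kj}$, and use that in characteristic $0$ the pullback on Kähler differentials is injective because the extension $\mathbb{C}(x_{ij})/K(Y_d)$ is automatically separable). Your dimension/fiber count for dominance and the remark that characteristic $2$ is exactly where the diagonal differentials $dM_{kk}$ degenerate are both consistent with the paper's discussion in Section~\ref{sec:poschar}.
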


In other words, $\mathcal{S}_n(d)$ is the column matroid of $J_{\mathrm{Sym}}$. 

\subsubsection{Hyperconnectivity}\label{sssec:hyperconnectivity}

For $0 \le d \le n$, the hyperconnectivity matroid $\mathcal{H}_n(d)$ is a matroid on $\binom{[n]}{2}$ of rank $dn - \binom{d+1}{2}$.
It was defined in \cite{KalaiHyperconnectivity} in terms of \emph{algebraic shifting}. 
It can equivalently be defined as a column matroid of an explicit matrix, see Definition~\ref{def:hyperconnectivity} below.
If $n = m + p$, then the restriction of $\mathcal{H}_n(d)$ to $[m] \times [p] \subseteq \binom{[n]}{2}$ is $\mathcal{B}_{m,p}(d,d)$. 

\begin{definition}\label{def:hyperconnectivity}
Consider the $nd \times \binom{n}{2}$ matrix over the field $\mathbb{C}(x_{ij})_{1 \le i \le n, 1 \le j \le d}$ whose rows are labeled by pairs $(i, j)$ with $1 \le i \le n, 1 \le j \le d$, and whose columns are labeled by $\{k, \ell\} \in \binom{[n]}{2}$. The row corresponding to $(i, j)$ has $x_{ki}$ in the column labeled by $\{k, j\}$ if $k < j$, $-x_{ki}$ in the column labeled by $\{k, j\}$ if $k > j$, and is $0$ otherwise. 
The \emph{hyperconnectivity matroid} $\mathcal{H}_n(d)$ is the column matroid of this matrix. 
\end{definition}

\subsubsection{Bipartite rigidity}

For $0 \le a \le m$ and $0 \le b \le n$, the bipartite rigidity matroid $\mathcal{B}_{m,n}(a,b)$ is a matroid on $[m] \times [n]$ of rank $an + bm - ab$. It was introduced in \cite{kalai2016bipartite}. See \cite{kalai2016bipartite}*{Section 1.2} for a physical interpretation of bipartite rigidity in terms of embeddings of bipartite graphs on $[m] \sqcup [n]$ into $\mathbb{R}^a \oplus \mathbb{R}^b$, where $[m]$ is embedded into $\mathbb{R}^a \oplus 0$ and $[n]$ is embedded into $0 \oplus \mathbb{R}^b$. It can equivalently be defined as the column matroid of an explicit matrix \cite{kalai2016bipartite}*{Proposition 3.3}.

\begin{definition}\label{def:bipartiterigidity}
Consider the $(an + bm) \times mn$ matrix over the field $\mathbb{C}(x_{ij}, y_{k\ell})$, where $(i, j) \in [m] \times [a]$ and $(k, \ell) \in [n] \times [b]$, whose rows are labeled by elements of $[a] \times [n]$ or $[b] \times [m]$, and whose columns are labeled by elements of $[m] \times [n]$. In the row labeled by $(j, k) \in [a] \times [n]$, we have $x_{pj}$ in the column labeled by $(p, k)$ for $p \in [m]$ and $0$ in every other column. In the row labeled by $(\ell, i) \in [b] \times [m]$, we have $y_{q \ell}$ in the column labeled by $(i, q)$ for $q \in [n]$ and $0$ in every other column. The \emph{bipartite rigidity matroid} $\mathcal{B}_{m,n}(a,b)$ is the column matroid of this matrix. 
\end{definition}

\subsection{Linear algebraic matroids}\label{ssec:linearalg}
We now give a more detailed definition of the symmetric power matroid $\mathrm{S}_n(r, p)$. The tensor matroid and wedge power matroid are defined similarly. Let $K$ be a field of characteristic $p \ge 0$. We say that $n$ vectors $v_1, \dotsc, v_n$ in $K^r$ are \emph{totally generic} if their coordinates are algebraically independent. Then the vectors $v_1^2, v_1v_2, \dotsc, v_{n-1}v_n, v_n^2$ are contained in $\operatorname{Sym}^2 K^r$, and so they define a matroid, which we denote by $\mathrm{S}_n(r, p)$. Given any vectors $v_1, \dotsc, v_n$ in an $r$-dimensional vector space $V$ over a field of characteristic $p \ge 0$, we say that they are \emph{generic} if the matroid represented by the vectors $v_1^2, v_1v_2, \dotsc, v_n^2$ in $\operatorname{Sym}^2 V$ is $\mathrm{S}_n(r,p)$. If $V$ is a vector space over an infinite field or over any sufficiently large finite field, then generic vectors exist. This follows from the fact that a nonzero polynomial $f \in K[x_1, \dotsc, x_n]$ does not vanish for every value of $x_1, \dotsc, x_n$ over an infinite field or a sufficiently large finite field, applied to the product of the maximal minors witnessing that each basis of $\mathrm{S}_{n}(r,p)$ is in fact a basis. 
See \cite[Corollary IV.1.6]{Lang} for the case of infinite fields; the case of sufficiently large finite fields follows from the Schwartz--Zippel lemma \cite{Sch80,Zip79}.

\subsection{Dualities}

In this section, we prove Theorem~\ref{thm:equivalence}. For this, it is convenient to pass to the dual picture. Suppose we have a collection of $n$ vectors in a $d$-dimensional vector space $L$. If we choose a basis for $L$, then we obtain a $d \times n$ matrix $A$. A collection of vectors is independent in the matroid represented by the vector configuration if and only if the corresponding columns of $A$ are linearly independent. Whether a given set of columns is linearly independent depends only on the row span of $A$. In particular, we can replace $A$ by any matrix with the same row span (even if its rows are linearly dependent). 

Using this formulation, we can describe $\mathrm{S}_n(r, p)$ as follows. Choose an infinite field $K$ of characteristic $p \ge 0$, and choose a generic linear subspace $L \subseteq K^n$ of dimension $r$. We then obtain a subspace $\operatorname{Sym}^2 L \subseteq \operatorname{Sym}^2 K^n$. We have a canonical basis for $\operatorname{Sym}^2K^n$, given by the vectors $e_1^2, e_1e_2, \dotsc, e_n^2$, where $e_1, \dotsc, e_n$ is the standard basis of $K^n$. To compute the independent sets of $\mathrm{S}_n(r, p)$, we choose $m$ vectors in $\operatorname{Sym}^2 K^n$ whose span is $\operatorname{Sym}^2 L$ for some $m \ge \dim \operatorname{Sym}^2L$, form the corresponding $m \times \binom{n + 1}{2}$ matrix, and check which columns are linearly independent.
To compute the dual of $\mathrm{S}_n(r, p)$, we choose vectors which span $(\operatorname{Sym}^2 L)^{\perp} \subseteq \operatorname{Sym}^2 K^n$, where the orthogonal complement is taken with respect to the usual inner product on a vector space with a basis, i.e., the dot product. We can calculate $\mathrm{W}_n(r, p)$ and $\operatorname{T}_{m,n}(s,r, p)$ in a similar way. 

\begin{proof}[Proof of Theorem~\ref{thm:equivalence}(\ref{symcase})]
As $\mathrm{S}_n(n-d, 0)$ is independent of the choice of infinite field of characteristic $0$, we may work over $K = \mathbb{C}(x_{ij})_{1 \le i \le n, 1 \le j \le d}$ and choose our generic linear subspace $L \subseteq K^n$ to be the orthogonal complement of the span of the vectors $(x_{11}, \dotsc, x_{n1}), \dotsc, (x_{1d}, \dotsc, x_{nd})$. In order to compute the dual of $\mathrm{S}_n(n-d, 0)$, we find vectors which span $(\operatorname{Sym}^2L)^{\perp}$. 
There is a surjective map $L^{\perp} \otimes K^n \to (\operatorname{Sym}^2L)^{\perp} \subseteq \operatorname{Sym}^2 K^n$ which sends $v \otimes w$ to $vw$. There is a basis for $L^{\perp} \otimes K^n$ given by vectors of the form $(x_{1j}, \dotsc, x_{nj}) \otimes e_i$ for $1 \le i \le n$ and $1 \le j \le d$. We form the matrix $A$ whose rows are given by the images of these vectors in $\operatorname{Sym}^2K^n$, written in the basis $e_1^2, e_1e_2, \dotsc, e_n^2$. The dual of  $\mathrm{S}_n(n-d, 0)$ records which columns of $A$ are linearly independent. 
We note that we obtain the matrix $J_{\operatorname{Sym}}$ of Proposition~\ref{prop:symmetricmatrix} from $A$ after multiplying the columns labeled by $i \in [n]$ by $2$, proving the equivalence. 
\end{proof}

\begin{example}
We now illustrate the proof of Theorem~\ref{thm:equivalence}(\ref{symcase}) in the case $n = 4$ and $d = 2$. The matroid $\mathcal{S}_4(2)$ is the column matroid of the matrix
$$\begin{pmatrix} x_{21} & x_{31} & x_{41} & 0 & 0 & 0 & 2x_{11} & 0 & 0 & 0 \\ 
x_{11} & 0 & 0 & x_{31} & x_{41} & 0 & 0 & 2x_{21} & 0 & 0 \\ 
0 & x_{11} & 0 & x_{21} & 0 & x_{41} & 0 & 0 & 2x_{31} & 0 \\
0 & 0 & x_{11} & 0 & x_{21} & x_{31} & 0 & 0 & 0 & 2x_{41} \\ 
x_{22} & x_{32} & x_{42} & 0 & 0 & 0 & 2x_{12} & 0 & 0 & 0 \\ 
x_{12} & 0 & 0 & x_{32} & x_{42} & 0 & 0 & 2x_{22} & 0 & 0 \\ 
0 & x_{12} & 0 & x_{22} & 0 & x_{42} & 0 & 0 & 2x_{32} & 0 \\
0 & 0 & x_{12} & 0 & x_{22} & x_{32} & 0 & 0 & 0 & 2x_{42} \\ 
\end{pmatrix}.$$
Here, in order, the rows are labeled by $(1,1), (2,1), \dotsc, (4, 1), (1,2), \dotsc, (4,2)$, and the columns are labeled by $(1,2), (1,3), (1,4),(2,3), (2,4), (3,4), 1, 2, 3, 4$. If $L$ is the subspace of $K^4$ which is the orthogonal complement of the span of the vectors $(x_{11}, x_{21}, x_{31}, x_{41})$ and $(x_{12}, x_{22}, x_{32}, x_{42})$, then the subspace $(\operatorname{Sym}^2L)^{\perp}$ of $\operatorname{Sym^2} K^4$ is spanned by the $8$ vectors $(x_{11}, x_{21}, x_{31}, x_{41}) \cdot e_1$, $(x_{11}, x_{21}, x_{31}, x_{41}) \cdot e_2, \dotsc, (x_{12}, x_{22}, x_{32}, x_{42}) \cdot e_4$. Expressing these vectors in terms of the basis $e_1e_2, e_1e_3, e_1e_4, e_2e_3, e_2e_4, e_3e_4, e_1^2, e_2^2, e_3^2, e_4^2$ for $\operatorname{Sym}^2 K^4$ gives the rows of the matrix above, up to multiplying some columns by $2$. 
\end{example}

\begin{proof}[Proof of Theorem~\ref{thm:equivalence}(\ref{skewcase})]
We work over $K = \mathbb{C}(x_{ij})_{1 \le i \le n, 1 \le j \le d}$, and choose our generic linear subspace $L \subseteq K^n$ to be the orthogonal complement of the span of the vectors $(x_{11}, \dotsc, x_{n1}), \dotsc, (x_{1d}, \dotsc, x_{nd})$. There is a surjective map $L^{\perp} \otimes K^n \to (\wedge^2L)^{\perp} \subseteq \wedge^2 K^n$. Using the basis $\{(x_{1j}, \dotsc, x_{nj}) \otimes e_i\}_{1 \le i \le n,  1 \le j \le d}$ for $L^{\perp} \otimes K^n$ and the basis $\{e_i \wedge e_j : i < j\}$ for $\wedge^2 K^n$, we see that $(\wedge^2 L)^{\perp}$ is the row span of the matrix appearing in Definition~\ref{def:hyperconnectivity}. 
\end{proof}

\begin{proof}[Proof of Theorem~\ref{thm:equivalence}(\ref{bipartitecase})]
We work over $K = \mathbb{C}(x_{ij}, y_{k\ell})$, where $(i, j) \in [m] \times [a]$ and $(k, \ell) \in [n] \times [b]$. Set $L_1 \subseteq K^m$ to be the orthogonal complement to the span of the vectors $(x_{11}, \dotsc, x_{m1}), \dotsc, (x_{1a}, \dotsc, x_{ma})$, and set $L_2 \subseteq K^n$ to be the orthogonal complement to the span of the vectors $(y_{11}, \dotsc, y_{n1}), \dotsc, (y_{1b}, \dotsc, y_{nb})$. There is a surjective map $(L_1^{\perp} \otimes K^n) \oplus (K^m \otimes L_2^{\perp}) \to (L_1 \otimes L_2)^{\perp} \subseteq K^m \otimes K^n$. This implies that $(L_1 \otimes L_2)^{\perp}$ is the row span of the matrix appearing in Definition~\ref{def:bipartiterigidity}. 
\end{proof}

\begin{example}
Using Theorem~\ref{thm:equivalence}, a special case of \cite{KalaiShifting}*{Problem 3} which is given as a conjecture in \cite{CrespoRuizSantos}*{Conjecture 4.3} becomes the following. Let $G$ be a graph on $n$ vertices, and suppose that $E(G)$ is independent in $\mathrm{W}_n(r, 0)$. Then $E(G)$ is independent in $\mathrm{S}_n(r-1, 0)$. 
We checked this for $r \le 6$. 
\end{example}

\subsection{Matrix completion in positive characteristic}\label{sec:poschar}

As mentioned in the introduction, the algebraic matroid of the variety of $n \times n$ skew-symmetric matrices of rank at most $2d$ over $\mathbb{C}$ is the hyperconnectivity matroid $\mathcal{H}_n(2d)$ \cite{CrespoRuizSantos}*{Proposition 3.1}, and the algebraic matroid of $m \times n$ matrices of rank at most $d$ over $\mathbb{C}$ is the bipartite rigidity matroid $\mathcal{B}_{m,n}(d,d)$ \cite{SingerCucuringu}*{Section 4}. 
We briefly comment on the relationship between the linear algebraic matroids in characteristic $p$ and the low-rank matrix completion matroids in characteristic $p$. This section is not used in the rest of the paper and can be skipped by the uninterested reader. For the field theory facts used in this section, see \cite{Matsumura}*{Section 26}, especially Theorem 26.6. 

We first sketch how one computes the low-rank matrix completion matroid. Let $Y_d$ be the subvariety of $\mathbb{C}^{mn}$ given by $m \times n$ matrices of rank at most $d$, and let $K(Y_d)$ be the function field of $Y_d$. The matrix completion matroid encodes when the coordinate functions $z_{ij} \in K(Y_d)$ are algebraically independent. Because we are over a field of characteristic $0$, some functions $\{z_{ij}\}$ are algebraically independent if and only if their differentials $\mathrm{d}z_{ij} \in \Omega_{K(Y_d)/\mathbb{C}}$ are linearly independent in the module of differentials of $K(Y_d)$. In order to make the module of differentials $\Omega_{K(Y_d)/\mathbb{C}}$ explicit, we use that every matrix of rank at most $d$ can be written as $AB$, where $A$ is an $m \times d$ matrix and $B$ is a $d \times n$ matrix. This means that there is a surjective map $\mathbb{C}^{md + dn} \to Y_d$, which sends a coordinate function $z_{ij}$ to $\sum_{\ell = 1}^{d} x_{i\ell} y_{\ell j}$. Because we are over a field of characteristic $0$, the pullback map $\Omega_{K(Y_d)/\mathbb{C}} \otimes_{K(Y_d)} \mathbb{C}(x_{ij}, y_{k \ell}) \to \Omega_{\mathbb{C}(x_{ij}, y_{k \ell})/\mathbb{C}}$ is injective. 
There is a basis for $\Omega_{\mathbb{C}(x_{ij}, y_{k \ell})/\mathbb{C}}$ given by $\{\mathrm{d}x_{ij}, \mathrm{d}y_{k\ell}\}$. 
Then $\mathrm{d}z_{ij}$ pulls back to $\mathrm{d}(\sum_{\ell = 1}^{d} x_{i\ell} y_{\ell j}) = \sum_{\ell=1}^{d} (x_{i\ell} \mathrm{d}y_{\ell j} + y_{\ell j} \mathrm{d}x_{i \ell})$. The matrix whose columns are given by the pullbacks of the $\mathrm{d}z_{ij}$ is exactly the matrix defining $\mathcal{B}_{m,n}(d,d)$. A similar argument can be used to compute the symmetric matrix completion matroid, using that an $n \times n$ symmetric matrix of rank at most $d$ can be written as $AA^t$, where $A$ is an $n \times d$ matrix, or the skew-symmetric matrix completion matroid, using that an $n \times n$ skew-symmetric matrix of rank at most $2d$ can be written as $AB^t - BA^t$ for $A, B$ $n \times d$ matrices. 

This argument breaks down in positive characteristic due to the presence of inseparable extensions. Given a finitely generated extension of fields $K/L$, we say that $a_1, \dotsc, a_\ell$ are \emph{separably algebraically independent} if there are $b_1, \dotsc, b_{p}$ such that $a_1, \dotsc, a_{\ell}, b_1, \dotsc, b_p$ is a \emph{separating transcendence basis} for $K/L$, i.e., they are algebraically independent and $K/L(a_1, \dotsc, b_p)$ is a finite separable extension. We say that $K/L$ is \emph{separable} if it has a separating transcendence basis. 

Over a field $\mathbf{k}$ of positive characteristic, it is no longer true that we can test algebraic independence of a collection $a_1, \dotsc, a_{\ell}$ by checking the linear independence of $\mathrm{d}a_1, \dotsc, \mathrm{d}a_{\ell}$. Rather, $\mathrm{d}a_1, \dotsc, \mathrm{d}a_{\ell}$ are linearly independent if and only if $a_1, \dotsc, a_{\ell}$  are separably algebraically independent. Furthermore, pullback maps on differentials are no longer automatically injective; they are injective if and only if the corresponding field extension is separable. This fails for the variety of symmetric matrices of rank at most $d$ in characteristic $2$: the map
$$\mathbf{k}^{nd} \to \{n \times n \text{ symmetric matrices of rank }\le d\}, \quad A \mapsto AA^t$$
does not induce a separable extension of function fields. In all other cases, the analogous map does induce a separable extension of function fields. This can be proved by verifying that the matrix defining $\mathcal{S}_n(d)$ has rank $nd - \binom{d}{2}$ in characteristic $p \not= 2$, the matrix defining $\mathcal{H}_n(2d)$ has rank $2dn - \binom{2d + 1}{2}$ in any characteristic, and the matrix defining $\mathcal{B}_{m,n}(d,d)$ has rank $d(m + n) - d^2$ in any characteristic. 
Then the proof of Theorem~\ref{thm:equivalence} gives the following result.

\begin{theorem}
Let $\mathbf{k}$ be a field of characteristic $p$. 
\begin{enumerate}
\item If $p \not= 2$, then a collection of elements is independent in the dual of the symmetric power matroid $\mathrm{S}_n(n-d, p)$ if and only if the corresponding rational functions on the variety of $n \times n$ symmetric matrices of rank at most $d$ over $\mathbf{k}$ are separably algebraically independent.
\item A collection of elements is independent in the dual of the wedge power matroid $\mathrm{W}_n(n-2d, p)$ if and only if the corresponding rational functions on the variety of $n \times n$ skew-symmetric matrices\footnote{In characteristic $2$, we say that a matrix is skew-symmetric if it is symmetric with zeroes on the diagonal. It remains true that, over an algebraically closed field, an $n \times n$ matrix can be written as $AB^t - BA^t$ with $A, B$ $n \times d$ matrices if and only if it is skew-symmetric and has rank at most $2d$.} of rank at most $2d$ over $\mathbf{k}$ are separably algebraically independent.
\item A collection of elements is independent in the dual of the tensor matroid $\mathrm{T}_{m, n}(m -d , n- d, p)$ if and only if the corresponding rational functions on the variety of $m \times n$ matrices of rank at most $d$ over $\mathbf{k}$ are separably algebraically independent.
\end{enumerate}
\end{theorem}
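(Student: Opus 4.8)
The plan is to leverage the general principle, recalled in this section, that relates algebraic matroids in characteristic $p$ to column matroids of Jacobian matrices of differentials, together with the explicit parametrizations already written down. For part (3), consider the dominant map $\phi\colon \mathbf{k}^{md+dn} \to Y_d$ sending $(A,B)$ to $AB$, where $Y_d$ is the variety of $m\times n$ matrices of rank at most $d$. The key input is that over any field $\mathbf{k}$, this map induces a separable extension of function fields: this follows because the Jacobian matrix of $\phi$ — which, as computed in the characteristic-$0$ discussion, is exactly the matrix defining $\mathcal{B}_{m,n}(d,d)$ — has rank $d(m+n)-d^2 = \dim Y_d$ over every characteristic. (This rank claim is asserted in the paragraph above the theorem; it can be checked directly by exhibiting a maximal nonvanishing minor with entries that are monomials in the $x_{ij}, y_{k\ell}$, so that it does not vanish in any characteristic.) Separability of $\phi$ is equivalent to the pullback map on modules of Kähler differentials $\Omega_{K(Y_d)/\mathbf{k}} \otimes_{K(Y_d)} \mathbf{k}(x_{ij},y_{k\ell}) \to \Omega_{\mathbf{k}(x_{ij},y_{k\ell})/\mathbf{k}}$ being injective (using that a dominant map between varieties of the same dimension is separable iff the generic rank of the differential equals that common dimension, c.f. \cite{Matsumura}*{Theorem 26.6}).

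Granting separability, the argument proceeds as in characteristic $0$. A collection of coordinate functions $\{z_{ij}\}$ on $Y_d$ is separably algebraically independent over $\mathbf{k}$ if and only if the differentials $\{\mathrm{d}z_{ij}\}$ are linearly independent in $\Omega_{K(Y_d)/\mathbf{k}}$ (this is the characterization of separable algebraic independence recalled in the section). By injectivity of the pullback, this holds if and only if the pulled-back differentials $\mathrm{d}\bigl(\sum_{\ell} x_{i\ell}y_{\ell j}\bigr) = \sum_{\ell}(x_{i\ell}\,\mathrm{d}y_{\ell j} + y_{\ell j}\,\mathrm{d}x_{i\ell})$ are linearly independent over $\mathbf{k}(x_{ij},y_{k\ell})$. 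The matrix whose columns are these pullbacks, written in the basis $\{\mathrm{d}x_{ij},\mathrm{d}y_{k\ell}\}$, is precisely the matrix of Definition~\ref{def:bipartiterigidity}, whose column matroid is $\mathcal{B}_{m,n}(d,d)$. By Theorem~\ref{thm:equivalence}(\ref{bipartitecase}) and its proof — which exhibits this same matrix as a matrix whose row span is $(L_1\otimes L_2)^\perp$, working now over $\mathbf{k}$ in place of $\mathbb{C}$ — this column matroid is the dual of $\mathrm{T}_{m,n}(m-d,n-d,p)$. This chain of equivalences is exactly the assertion of part (3). Parts (1) and (2) are identical in structure, using the parametrizations $A \mapsto AA^t$ (for $n\times n$ symmetric matrices of rank $\le d$, valid when $p\neq 2$) and $(A,B)\mapsto AB^t - BA^t$ (for $n\times n$ skew-symmetric matrices of rank $\le 2d$, valid in all characteristics, with the footnote's convention on ``skew-symmetric'' in characteristic $2$), together with the rank computations $nd - \binom{d}{2}$ and $2dn - \binom{2d+1}{2}$ asserted above and the corresponding parts of Theorem~\ref{thm:equivalence}.

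The main obstacle — indeed essentially the only nontrivial point — is verifying that these Jacobian matrices have the expected rank in positive characteristic, equivalently that the parametrizing maps are separable (and, for the symmetric case in characteristic $2$, confirming that they genuinely are \emph{not}). For the bipartite and skew-symmetric cases one wants a characteristic-free argument, which is cleanest via specialization: plug in specific integer (or indeterminate) values for the $x$'s and $y$'s making a chosen $(\dim Y_d)\times(\dim Y_d)$ submatrix triangular or otherwise obviously invertible with a determinant that is a unit, or at least nonzero in $\mathbb{Z}$ up to units avoiding no prime — e.g. the matrix defining $\mathcal{B}_{m,n}(d,d)$ has a natural block structure for which one can read off a permutation-matrix-like pattern of monomial entries. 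For the symmetric case in characteristic $2$, the failure is visible because $\mathrm{d}(\sum_\ell x_{i\ell}^2) = 0$, so the diagonal coordinate functions pull back to zero differentials; one notes that the generic rank of $J_{\mathrm{Sym}}$ drops below $nd-\binom{d}{2}$, which is precisely why only separable algebraic independence (not ordinary algebraic independence, and only for $p \neq 2$) is captured by the dual of $\mathrm{S}_n(n-d,p)$. I would also be careful to state explicitly that each parametrizing map is dominant and that source and target have equal dimension over $\mathbf{k}$, so that the rank-equals-dimension criterion for separability applies.
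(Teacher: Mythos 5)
Your proposal follows the paper's argument essentially verbatim: parametrize the rank-$\le d$ locus, use the characterization of separable algebraic independence via linear independence of differentials together with the equivalence ``parametrizing map is separable iff pullback on differentials is injective,'' reduce to verifying that the Jacobian matrix has rank $\dim Y_d$ in the relevant characteristics, and then run the proof of Theorem~\ref{thm:equivalence} over $\mathbf{k}$ in place of $\mathbb{C}$. The one small slip is the parenthetical ``a dominant map between varieties of the same dimension'': here $\mathbf{k}^{md+dn}$ has dimension $md+dn$ while $Y_d$ has dimension $d(m+n)-d^2$, so the dimensions differ; the correct criterion (which is in fact what you then apply) is that a dominant $\varphi\colon X\to Y$ induces a separable extension of function fields iff the generic rank of $d\varphi$ equals $\dim Y$.
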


\begin{remark}
A collection of coordinate functions $\{x_{ij} : (i, j) \in S \subseteq [m] \times [n]\} \subseteq K(Y_d)$ is algebraically independent if and only if the coordinate projection $Y_d \hookrightarrow \mathbf{k}^{mn} \to \mathbf{k}^S$ is \emph{dominant}, i.e., its image contains a Zariski open set. The collection is separably algebraically independent if and only if the coordinate projection is \emph{generically smooth}. 
\end{remark}

\begin{remark}
One can sometimes show that the rank completion matroid in characteristic $p$, i.e., the algebraic matroid of the rank at most $d$ locus, is independent of the characteristic by showing that the tropicalization of the rank at most $d$ locus does not depend on the characteristic. For example, this was shown for $d=2$ in \cite{tropicalrank}*{Section 6}. We note that results of this form do \emph{not} imply that the tensor matroid is independent of the characteristic. 
\end{remark}

\begin{remark}
We are unaware of any case in which the symmetric matrix completion matroid, the skew-symmetric matrix completion matroid, or the matrix completion matroid depends on the characteristic. In particular, the $4 \times 4$ symmetric rank $2$ matrix completion matroid is the same in all characteristics, even though $\mathrm{S}_4(2, 0) \not= \mathrm{S}_4(2, 2)$. 
\end{remark}

\section{Bipartite rigidity when $m-a$ is small}\label{sec:m-asmall}
In this section, we give a complete description of the matroid $\mathrm{T}_{m,n}(s,r, p)$ when $s \le 3$. Our description is independent of the characteristic $p$. 
By Theorem~\ref{thm:equivalence}(\ref{bipartitecase}), this gives a description of $\mathcal{B}_{m,n}(a,b)$ when $m - a \le 3$. We use this description to prove Theorem~\ref{thm:smallm-a} and Corollary~\ref{cor:laman}.

\subsection{The disjoint case and $s=1$}\label{subsec:descriptiontensor}

Given an infinite field $\F$ of characteristic $p$, consider $m$ generic vectors $u_1, \dotsc, u_m$ in $\F^s$ and $n$ generic vectors $v_1,\dotsc,v_n$ in $\F^r$. By definition, the independent sets of the matroid $\mathrm{T}_{m,n}(s,r,p)$ are given by sets $E\subseteq [m]\times [n]$ such that $\{u_i\otimes v_j : (i,j)\in E\}$ is linearly independent. We give a characteristic-independent description of the independent sets when $s\le 3$.

We write $E=\bigcup_{i=1}^m\{i\}\times A_i,A_i\subseteq [n]$. We first show that when the $A_i$ are disjoint a simple dimension/size criterion is equivalent to independence.

\begin{lemma}[The disjoint case]\label{lem:disjoint-case}
   Let $A_1, \hdots A_m \subseteq [n]$ be disjoint sets. Let $E=\bigcup_{i=1}^m \{i\}\times A_i\subseteq [m]\times [n]$. Then $E$ is independent in $\mathrm{T}_{m,n}(s,r, p)$ if and only if $|A_i| \le r$ for all $i$ and $\sum_{i=1}^m |A_i| \le sr$.   
 \end{lemma}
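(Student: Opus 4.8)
The statement splits into two directions. For the ``only if'' direction, both conditions are instances of the submatroid structure already recorded before the lemma. If some $|A_i| > r$, then $\{i\} \times A_i$ consists of more than $r$ vectors of the form $u_i \otimes v_j$ all lying in the $r$-dimensional space $u_i \otimes \F^r$, hence they are dependent; so $E$ is dependent. And $\sum_i |A_i| = |E|$ cannot exceed the rank of $\mathrm{T}_{m,n}(s,r,p)$, which (being dual to $\mathcal{B}_{m,n}(m-s,n-r)$, or directly as the dimension of the span of all $u_i \otimes v_j$ inside $\F^s \otimes \F^r$) is at most $sr$. Both are immediate.

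For the ``if'' direction, the plan is to exhibit, for disjoint $A_i$ with $|A_i| \le r$ and $\sum |A_i| \le sr$, a concrete choice of generic vectors $u_i, v_j$ (generic suffices since independence of a fixed pattern is an open condition and the matroid does not depend on the choice) making $\{u_i \otimes v_j : (i,j) \in E\}$ linearly independent. I would first reduce to the case $\sum |A_i| = sr$ by enlarging the $A_i$: since the $A_i$ are disjoint and $|A_i| \le r$, as long as $\sum |A_i| < sr$ and $n$ is large enough we can grow them (adding fresh columns, possibly increasing $n$, which only enlarges the ground set) keeping them disjoint and each of size $\le r$; proving independence of the larger pattern implies it for $E$. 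Then I want to organize the $m$ vectors $u_1, \dots, u_m$ so that a matrix built blockwise from the $v_j$'s is visibly invertible. Concretely: think of the linear map $\F^s \otimes \F^r \to \bigoplus_i \F^{A_i}$ dual to the span; equivalently, consider the $(\sum|A_i|) \times (sr)$ matrix whose row $(i,j)$, $j \in A_i$, is $u_i \otimes v_j$ written in the standard basis of $\F^s \otimes \F^r$. This matrix is square (after the reduction) and its block row $i$ is $u_i^T \otimes M_i$ where $M_i$ is the $|A_i| \times r$ matrix with rows $v_j$, $j \in A_i$. I would choose each $M_i$ to have a specified $|A_i|$-subset of columns equal to the identity (possible generically since $|A_i|\le r$), and choose the $u_i$ so that the induced square matrix is block-triangular with invertible diagonal blocks — this is where a careful bookkeeping of which $r$-subspace each block ``uses'' is needed.

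The main obstacle is precisely this combinatorial packing step: arranging the column-support of the $M_i$ blocks together with the choice of $u_i \in \F^s$ so that the $\sum|A_i| = sr$ vectors genuinely fill out $\F^s \otimes \F^r$ without collision. The cleanest route is probably to split $[r]$-indexed columns into $s$ groups according to a near-equitable partition consistent with the $|A_i|$ (a flow/Hall-type argument guarantees one exists given $|A_i| \le r$ and $\sum |A_i| = sr$), assign to block $i$ the standard basis vector $e_{k}$ for the appropriate $k \in [s]$ on each of its rows, and then perturb generically; the resulting matrix is, up to permutation, block diagonal with each block an identity, hence nonsingular, and genericity upgrades this to the generic configuration. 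I expect the write-up to hinge on making the partition/assignment statement precise (essentially a degree-constrained bipartite subgraph existence), after which linear independence follows by inspection. An alternative, possibly shorter, is an induction on $m$: strip off $u_m$, handle the single-vertex case $E = \{m\} \times A_m$ (independence iff $|A_m|\le r$, trivial) and glue using that $u_m$ is generic relative to $u_1,\dots,u_{m-1}$; the bookkeeping there is about how much of the ``budget'' $sr$ is consumed, and the same counting conditions $|A_i| \le r$, $\sum |A_i| \le sr$ are exactly what is needed for the inductive step to go through.
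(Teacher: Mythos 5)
Your ``only if'' direction is fine and matches the paper. The problem is in the ``if'' direction. The construction you sketch --- choosing each $u_i$ to be a standard basis vector $e_{k(i)}\in\F^s$ and each $v_j$ to be a standard basis vector of $\F^r$ so that, after reducing to $\sum|A_i| = sr$, the $sr\times sr$ matrix with rows $u_i \otimes v_j$ is a permutation matrix --- requires partitioning $[m]$ into $s$ groups $G_1, \dotsc, G_s$ with $\sum_{i\in G_k}|A_i|\le r$ for every $k$; that is, you are asking to bin-pack the numbers $|A_i|$ (each at most $r$) into $s$ bins of capacity $r$. This is not always possible even when the total is exactly $sr$: take $s=2$, $r=3$, and three disjoint $A_i$ each of size $2$. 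The total is $6=sr$ and each $|A_i|\le r$, so the lemma asserts independence, but three items of size $2$ cannot fill two bins of capacity $3$. So the ``block diagonal with each block an identity'' starting point you want to perturb need not exist, and no flow/Hall argument will manufacture it.

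The paper sidesteps this by never specializing the $u_i$: they stay generic, and only the $v_j$ are sent to standard basis vectors. The combinatorial requirement then relaxes to an assignment $\phi\colon\bigcup_i A_i\to[r]$ that is injective on each $A_i$ (possible since $|A_i|\le r$) and hits each value $\ell\in[r]$ at most $s$ times in total (possible since $\sum|A_i|\le sr$); such a $\phi$ always exists and the paper produces it greedily by cycling through $e_1,\dotsc,e_r$ while processing $A_1, A_2,\dotsc$ in order. A linear dependence among the $u_i\otimes v_j$ then decomposes across the direct sum $\bigoplus_\ell \F^s\otimes e_\ell$, and on each summand it involves at most $s$ of the generic $u_i$, so it is trivial. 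Note also that the reduction to $\sum|A_i|=sr$ is unnecessary (and in the case that every $|A_i|$ already equals $r$ it would force you to increase $m$, not just $n$); the greedy argument handles the slack inequalities directly.
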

 \begin{proof}
   The ``only if'' conditions follow from comparing dimensions. For the ``if'' direction, it suffices to give a choice of the $u_i$ and $v_j$ such that $\{u_i \otimes v_j : (i, j) \in E\}$ is a set of linearly independent vectors. Let $e_1, \hdots, e_{r}$ be a basis of $\F^{r}$. Set each $v_j \in A_1$ to be one of the basis vectors. Keep going, and set each $v_j \in A_2$ to be the ``next'' basis vector and so forth. Each $v_j$ will be set to some basis vector, no two vectors in some $A_i$ are set to the same basis vector, and each basis vector is used at most $s$ times. For each $j \in [r]$, let $C_j$ be the set of $i$ for which $e_j$ is used for some vector in $A_i$. 

   Pick the $u_i$ to be generic vectors, so every $s$ of them are linearly independent. Consider a linear dependence among $\{u_i \otimes v_j : (i, j) \in E\}$. That is, some $\lambda_{i,j} \in \F$ for which
   \[
     \sum_{(i,j) \in E} \lambda_{i,j} u_i\otimes v_j = 0.
   \]
   In particular, we have that
   \[
     \sum_{j \in [r]} \sum_{i \in C_j} \lambda_{i,j'} u_i \otimes e_j = 0,
   \]
   where $j'$ is shorthand for the $j' \in A_i$ for which $v_{j'} = e_j$.  Since the $e_j$ form a basis, we have that, for all $j \in [r]$,
   \[
     \sum_{i \in C_j} \lambda_{i,j'} u_i = 0. 
   \]
   Since $|C_j| \le s$ for each $j$ and the $u_i$ are generic, we have that $\lambda_{i,j'} = 0$ for all $(i, j') \in E$. Thus, the vectors $u_i \otimes v_j$ are linearly independent, as desired.
 \end{proof}

We can use the above result to address the case $s = 1$.

\begin{corollary}\label{lem:r=1}
Let $A_1, \hdots A_m \subseteq [n]$, and set $E=\bigcup_{i=1}^m \{i\}\times A_i\subseteq [m]\times [n]$.
Then $E$ is independent in $\mathrm{T}_{m,n}(1, r, p)$ if and only if
\begin{align}
  |A_i \cap A_j| &= 0 && \text{ for all distinct } i,j\in [m]\label{eq:A01}\\
  \sum_{i=1}^m |A_i| &\le r.\label{eq:A03}
\end{align}
\end{corollary}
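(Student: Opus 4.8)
The plan is to reduce the statement directly to Lemma~\ref{lem:disjoint-case}, specialized to the case $s = 1$. The key observation is that when $s = 1$, each $u_i$ is a generic (hence nonzero) scalar, so $u_i \otimes v_j$ is a nonzero scalar multiple of $1 \otimes v_j \in \F^1 \otimes \F^r$; in particular, two such tensors sharing the same column index $j$ are parallel.

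First I would handle the ``only if'' direction. Assuming $E$ is independent, I would note that if some $j$ lay in $A_i \cap A_{i'}$ for distinct $i, i'$, then $u_i \otimes v_j$ and $u_{i'} \otimes v_j$ would be two nonzero parallel vectors in $\{u_i \otimes v_j : (i,j) \in E\}$, contradicting linear independence; hence the $A_i$ are pairwise disjoint, which is \eqref{eq:A01}. With disjointness in hand, Lemma~\ref{lem:disjoint-case} applied with $s = 1$ immediately gives $|A_i| \le r$ for all $i$ and $\sum_i |A_i| \le sr = r$, the latter being \eqref{eq:A03}.

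For the ``if'' direction, I would start from \eqref{eq:A01} and \eqref{eq:A03}. Condition \eqref{eq:A01} says the $A_i$ are disjoint, and \eqref{eq:A03} gives $\sum_i |A_i| \le r = sr$ as well as $|A_i| \le r$ for each $i$ (since the summands are nonnegative). Lemma~\ref{lem:disjoint-case} with $s = 1$ then yields that $E$ is independent in $\mathrm{T}_{m,n}(1, r, p)$.

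I do not expect a genuine obstacle here: the content is entirely in the reduction, and the only point requiring a line of argument is that a repeated column index among the $A_i$ produces a parallel pair of tensors in the $s = 1$ setting, which is precisely what forces the $A_i$ to be disjoint. Everything else is a direct specialization of Lemma~\ref{lem:disjoint-case}.
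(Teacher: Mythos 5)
Your proposal is correct and follows essentially the same route as the paper: deduce disjointness from the fact that $u_i \otimes v_k$ and $u_j \otimes v_k$ are dependent when $s=1$, then invoke Lemma~\ref{lem:disjoint-case} with $s=1$ for sufficiency. The only cosmetic difference is that the paper gets the necessity of \eqref{eq:A03} directly from $\dim(\F \otimes \F^r) = r$ rather than routing it back through Lemma~\ref{lem:disjoint-case}; both are valid.
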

\begin{proof}
First, we prove that (\ref{eq:A01}) and (\ref{eq:A03}) are necessary. If there exists some $k \in A_i \cap A_j$, then $u_i \otimes v_k$ and $u_j \otimes v_k$ are linearly dependent, so (\ref{eq:A01}) is necessary. The necessity of (\ref{eq:A03}) is obvious, as $\dim (\F \otimes \F^r) = r$.

To prove sufficiency, note that (\ref{eq:A01}) implies we are in the disjoint case. Thus, we can apply Lemma~\ref{lem:disjoint-case} with $s=1$.
\end{proof}

\subsection{The case $s=2$}

We now give a description of $\mathrm{T}_{m,n}(2,r,p)$ for every $p$ and $r$. A different description of $\mathcal{B}_{m,n}(m -2, m-2)$ was given in \cite[Proposition 2]{Tsakiris}, which gives a description of $\mathrm{T}_{m,n}(2,r,0)$. 

\begin{proposition}[Case $s=2$]\label{lem:m-a=2}
Let $A_1, \hdots A_m \subseteq [n]$ be sets of size at most $r$, and set $E=\bigcup_{i=1}^m \{i\}\times A_i\subseteq [m]\times [n]$.
Then $E$ is independent in $\mathrm{T}_{m,n}(2, r, p)$ if and only if
\begin{align}
  |A_i \cap A_j \cap A_k| &= 0 && \text{ for all distinct } i,j,k\in [m]\label{eq:A1}\\
  \sum_{1 \le i < j \le m} |A_i \cap A_j| + \left|A_k \setminus \bigcup_{i\in [m]\setminus \{k\}}  A_i\right| &\le r && \text{ for all } k \in [m]\label{eq:A2}\\
  \sum_{i=1}^m |A_i| &\le 2r.\label{eq:A3}
\end{align}
\end{proposition}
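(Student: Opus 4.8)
The plan is to establish both directions by explicit linear algebra, uniformly in the characteristic. Since $\mathrm{T}_{m,n}(2,r,p)$ is defined by sufficiently generic vectors $u_1,\dots,u_m \in \F^2$ and $v_1,\dots,v_n \in \F^r$, and linear independence of a finite family is a Zariski-open condition on these vectors, for the ``if'' direction it suffices to exhibit one choice realizing independence; only the relation ``any two of the $u_i$ are linearly independent'' will be used, which holds generically over any infinite field.

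\emph{Necessity.} Condition \eqref{eq:A1} is forced because $u_i\otimes v_\ell, u_j\otimes v_\ell, u_k\otimes v_\ell$ lie in the $2$-dimensional space $\F^2\otimes\langle v_\ell\rangle$, and \eqref{eq:A3} because $\dim(\F^2\otimes\F^r)=2r$. For \eqref{eq:A2}, fix $k$ and let $q_k\colon \F^2\otimes\F^r\to(\F^2/\langle u_k\rangle)\otimes\F^r\cong\F^r$ be the projection. With $W=\Span\{u_i\otimes v_j:(i,j)\in E\}$, the image $q_k(W)$ is spanned by $\{v_j : j\in\bigcup_{i\ne k}A_i\}$, and $W\cap\ker q_k = \langle u_k\rangle\otimes\bigl(\Span\{v_j : j\in A_k\}+\Span\{v_j : j\in P_k\}\bigr)$, where $P_k$ is the set of columns lying in exactly two of the $A_i$ with $i\ne k$; reading off these dimensions uses \eqref{eq:A1} and the hypothesis $|A_i|\le r$, and is valid when $|\bigcup_{i\ne k}A_i|\le r$. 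Rank--nullity then gives $\dim W = \min(|A_k|+|P_k|,\,r)+|\bigcup_{i\ne k}A_i|$, while independence of $E$ gives $\dim W = \sum_i|A_i| = |A_k|+|P_k|+|\bigcup_{i\ne k}A_i|$ (inclusion--exclusion via \eqref{eq:A1}); hence $|A_k|+|P_k|\le r$, which an easy bookkeeping identity ($|P|+|B_k|=|A_k|+|P_k|$, writing $B_k:=A_k\setminus\bigcup_{i\ne k}A_i$) shows is exactly \eqref{eq:A2}. Finally, a short count with $|P|$ and $|B_k|$ shows that if \eqref{eq:A3} holds and \eqref{eq:A2} fails for $k$ then $|\bigcup_{i\ne k}A_i|\le r$, so the case excluded above never arises.

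\emph{Sufficiency.} Fix a basis $e_1,\dots,e_r$ of $\F^r$ and generic $u_1,\dots,u_m\in\F^2$, and put $v_j=e_{\sigma(j)}$ for a function $\sigma$ from the columns appearing in $E$ to $[r]$ to be chosen. The family $\{u_i\otimes v_j:(i,j)\in E\}$ respects $\F^2\otimes\F^r=\bigoplus_\ell \F^2\otimes e_\ell$, so it is independent exactly when, for each $\ell$, the multiset $\{u_i:(i,j)\in E,\ \sigma(j)=\ell\}$ is a linearly independent subset of $\F^2$, i.e.\ it involves at most two distinct rows, each only once. Calling a column \emph{heavy} if it lies in two of the $A_i$ (never more, by \eqref{eq:A1}) and \emph{light} otherwise, a valid $\sigma$ assigns to each color either one heavy column, or one or two light columns from distinct rows. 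Writing $Q$ for the number of light columns, the minimum number of colors needed to pack the light columns under the distinct-rows constraint is the scheduling quantity $\max(\lceil Q/2\rceil,\ \max_k|B_k|)$, so a suitable $\sigma$ exists once $|P|+\max(\lceil Q/2\rceil,\ \max_k|B_k|)\le r$. Since $\sum_i|A_i|=2|P|+Q$, condition \eqref{eq:A3} is equivalent to $|P|+\lceil Q/2\rceil\le r$ and \eqref{eq:A2} to $|P|+|B_k|\le r$ for all $k$; together these give the required bound, completing the construction.

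\emph{Expected difficulty.} Conditions \eqref{eq:A1} and \eqref{eq:A3} are routine. The delicate points are the exact computation of $\dim(W\cap\ker q_k)$ together with disposing of the case $|\bigcup_{i\ne k}A_i|>r$ in the necessity of \eqref{eq:A2}, and, on the sufficiency side, the packing lemma (the formula $\max(\lceil Q/2\rceil,\max_k|B_k|)$ and the fact that a coloring of the heavy columns always extends to a valid $\sigma$). The genuinely new phenomenon relative to Lemma~\ref{lem:disjoint-case} and Corollary~\ref{lem:r=1} is the interplay between doubly-covered and private columns, which is precisely what \eqref{eq:A2} records.
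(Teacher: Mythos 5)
Your proof is correct, and the high-level strategy matches the paper's: establish necessity by dimension counts, and establish sufficiency by specializing the $v_j$ to standard basis vectors and exploiting the resulting decomposition $\F^2\otimes\F^r=\bigoplus_\ell\F^2\otimes e_\ell$. The tactical details differ in two places, and the comparison is worth recording. For the necessity of \eqref{eq:A2}, the paper argues directly inside $\langle u_k\rangle\otimes\F^r$: it observes that $\F^2\otimes V_{A_i\cap A_j}\subseteq W$ for every pair $i<j$, hence $u_k\otimes V_{A_i\cap A_j}\subseteq W$, and likewise $u_k\otimes V_{A_k\setminus\bigcup_{i\ne k}A_i}\subseteq W$; since these are spanned by disjoint subsets of $E$ and all sit inside the $r$-dimensional slice $\langle u_k\rangle\otimes\F^r$, independence of $E$ forces \eqref{eq:A2} immediately, with no side cases. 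Your rank--nullity computation via the quotient $q_k$ is also valid, but it requires the extra bookkeeping step you supply to dispose of the case $|\bigcup_{i\ne k}A_i|>r$ (which you correctly show cannot occur when \eqref{eq:A3} holds and \eqref{eq:A2} fails); the paper's version avoids that subtlety entirely. For sufficiency, the paper assigns $|S_2|$ distinct basis vectors to the heavy columns and then puts generic vectors in the complementary coordinate subspace on the light columns, reducing to Lemma~\ref{lem:disjoint-case} (note the paper has a typo there --- the parameter should be $r-|S_2|$, not $r$); you instead assign basis vectors to every column and carry out the packing explicitly, which makes the scheduling quantity $\max(\lceil Q/2\rceil,\max_k|B_k|)$ visible. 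Both routes are fine; the paper's is slightly more modular since it reuses the disjoint-case lemma (which is also invoked in the $s=3$ proof), while yours is more self-contained and makes the combinatorics of the packing transparent.
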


\begin{proof}
  For any set $A\subseteq [n] $, let $V_A$ be the subspace of $\F^r$ spanned by the generic vectors $v_i$ for $i\in A$.

  The fact that these inequalities are necessary is not hard to see. The last inequality is equivalent to requiring $|E| \le 2r=\dim (\F^2\otimes \F^r)$.  For (\ref{eq:A2}), let $W = \Span \{u_i \otimes v_j : (i,j) \in E\}$. Observe that, for all $i < j \in [m]$, $\F^2 \otimes V_{A_i \cap A_j} \subseteq W$. In particular, $u_k \otimes V_{A_i \cap A_j} \subseteq W$. Also note that $u_k \otimes V_{A_k \setminus \bigcup_{i\not = k} A_i} \subseteq W$. For these to be linearly independent we must have (\ref{eq:A2}).

  If $\ell\in A_i\cap A_j\cap A_k$ for some distinct $i,j,k \in [m]$, then we have linearly dependent vectors $u_i\otimes v_\ell,u_j\otimes v_\ell,u_k\otimes v_\ell\in \F^2\otimes \F^r$. This shows that \eqref{eq:A1} is necessary.
  
  If the above inequalities are satisfied, then we show that $\{u_i \otimes v_j : (i,j) \in E\}$ are independent for a semi-explicit choice of the $u_i \in \F^2$ and $v_j\in \F^{r}$. Let $e_1, \hdots, e_{r}$ be a basis of $\F^{r}$. Let $S_1$ be the elements of $\bigcup_{i=1}^m A_i$ that appear in exactly one $A_i$. Let $S_2$ be the elements which appear in exactly two $A_i$. By (\ref{eq:A1}), we have that $S_1 \cup S_2 = \bigcup_{i=1}^m A_i$. Observe then that (\ref{eq:A2}) and (\ref{eq:A3}) can thus be re-written as
  \begin{align}
    |A_i \cap S_1| + |S_2| &\le r \label{eq:B2} \qquad \qquad \text{ for all }i\\
    |S_1| + 2|S_2| &\le 2r \label{eq:B3} 
      \end{align}

  Pick $u_1, \hdots, u_m$ to be generic vectors in $\F^2$, so any pair of the vectors are linearly independent. For each $j \in S_2$, set $v_j$ to be a separate $e_i$ for $i \in \{1, 2, \hdots, |S_2|\}.$ This is possible by (\ref{eq:A2}) as $|S_2| \le r$. Finally, set the $v_i$ for $i \in S_1$ to be generic vectors in the space spanned by $\{e_i : i \in \{|S_2|+1, \hdots, r\}\}$.

  It is not hard to see from these choices that $\{u_i \otimes v_j : (i, j) \in E\}$ are independent if and only if $\{u_i \otimes v_j : (i, j) \in E, j \in S_1\}$ are independent, as the vectors in $S_2$ and $S_1$ are in disjoint subspaces. Thus a non-zero linear combination of $\{u_i \otimes v_j : (i, j) \in E, j \in S_2\}$ cannot intersect with the subspace spanned by $\{u_i \otimes v_j : (i, j) \in E, j \in S_1\}$. As $j\in S_2$ only appears in two of the $A_i$, we also see that the set $\{u_i \otimes v_j : (i, j) \in E, j \in S_2\}$ is linearly independent and indeed spans $\F^2\otimes \F^{|S_2|}$.
  
Then $\{u_i \otimes v_j : (i, j) \in E, j \in S_1\}$ are vectors in the subspace $\F^2\otimes \F^{r-|S_2|}$. The result follows from applying Lemma~\ref{lem:disjoint-case} with the parameters $(m,n,s,r)=(m, n-|S_2|, 2, r - |S_2|)$ and the sets $A_i \cap S_1$ for $i \in [m]$. The inequalities needed to invoke Lemma~\ref{lem:disjoint-case} are exactly (\ref{eq:B2}) and (\ref{eq:B3}). 
\end{proof}

\subsection{The case $s=3$}

\begin{proposition}[Case $s=3$]\label{lem:m-a=3}
Let $A_1, \hdots A_m \subseteq [n]$ be sets of size at most $r$, and set $E=\bigcup_{i=1}^m \{i\}\times A_i\subseteq [m]\times [n]$. For $k \in \{1, 2, 3\}$, let $S_k$ be the set of $j \in [n]$ which appear in exactly $k$ of the $A_i$. 
Then $E$ is independent in $\mathrm{T}_{m,n}(3, r, p)$ if and only if
  \begin{align}
    |A_i \cap A_j \cap A_k \cap A_\ell| &= 0 && \text{for all distinct }i,j,k,\ell\in [m]\label{eq:c1}\\
    |A_i \setminus S_3| + |S_3| &\le r && \text{for all }i\in [m]\label{eq:c2}\\
    |(A_i \cap A_j) \setminus S_3| + |(A_k \cap A_\ell) \setminus S_3| + |S_3| &\le r  && \text{for all distinct }i,j,k,\ell\in [m] \label{eq:c3}\\
    |A_i \setminus S_3| + |A_j \setminus S_3| + |S_2 \setminus (S_3 \cup A_i \cup A_j)| + 2|S_3| &\le 2r && \text{for all distinct }i,j\in [m] \label{eq:c5}\\
    |S_1| + 2|S_2| + 3|S_3| &\le 3r \label{eq:c4}.
  \end{align}
\end{proposition}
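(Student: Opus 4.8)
The plan is to follow the same two-step template used for $s=1$ and $s=2$: first verify that the five inequalities are necessary by a dimension-counting argument on the subspace $W = \Span\{u_i \otimes v_j : (i,j) \in E\} \subseteq \F^3 \otimes \F^r$, and then, assuming the inequalities, build an explicit (or semi-explicit) choice of vectors $u_i \in \F^3$ and $v_j \in \F^r$ witnessing independence, reducing to Lemma~\ref{lem:disjoint-case}. For necessity: \eqref{eq:c1} holds because if $\ell$ lies in four sets then the four vectors $u_i \otimes v_\ell$ live in the $3$-dimensional space $\F^3 \otimes \Span(v_\ell)$; \eqref{eq:c4} is just $|E| \le 3r = \dim(\F^3 \otimes \F^r)$; and \eqref{eq:c2}, \eqref{eq:c3}, \eqref{eq:c5} come from exhibiting, for a given choice of indices, a collection of subspaces of $W$ whose dimensions must add up to at most $r$ or $2r$ — e.g. for \eqref{eq:c2}, $\F^3 \otimes V_{S_3}$ together with $u_i \otimes V_{A_i \setminus S_3}$; for \eqref{eq:c3}, $\F^3 \otimes V_{S_3}$ together with $u_{\text{(any third index)}} \otimes V_{(A_i \cap A_j)\setminus S_3}$ and $u_{\text{(any third index)}} \otimes V_{(A_k \cap A_\ell)\setminus S_3}$, using that triple intersections outside $S_3$ are empty by \eqref{eq:c1}; for \eqref{eq:c5}, a two-plane $\F^2 \otimes(\cdots)$ coming from the two sets $A_i, A_j$ and a one-plane from the remaining elements of $S_2$. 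One must check these subspace sums are direct, which is where \eqref{eq:c1} (no quadruple, hence outside $S_3$ no triple intersections) is repeatedly used.

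For sufficiency, I would partition $\bigcup A_i = S_1 \sqcup S_2 \sqcup S_3$ using \eqref{eq:c1}, choose a basis $e_1, \dots, e_r$ of $\F^r$, and assign the $v_j$ in three blocks. First, for $j \in S_3$ set $v_j$ to be a distinct basis vector $e_1, \dots, e_{|S_3|}$; this is legitimate since $|S_3| \le r$ by \eqref{eq:c2}. Next handle $S_2$: the elements of $S_2$ with their pair-labels form a graph $H$ on vertex set $[m]$ (an edge $\{i,j\}$ for each $\ell \in S_2$ appearing in $A_i \cap A_j$, with multiplicity); I would place the corresponding $v_\ell$ as distinct basis vectors in the block $e_{|S_3|+1}, \dots, e_{|S_3|+|S_2|}$, so that the vectors $\{u_i \otimes v_\ell : \ell \in S_2\}$ together with $\F^3 \otimes V_{S_3}$ span $\F^3 \otimes \Span(e_1,\dots,e_{|S_3|})$ plus a piece inside $\Span$ of the $S_2$-block; the point is that an element of $S_2$ appearing in $A_i \cap A_j$ contributes only the two vectors $u_i \otimes v_\ell, u_j \otimes v_\ell$, which are independent since $u_i, u_j$ are. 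Finally set the $v_j$ for $j \in S_1$ to be generic vectors in the complementary space $\Span(e_{|S_3|+|S_2|+1}, \dots, e_r)$ — this is possible precisely when $|S_3| + |S_2| \le r$, which is a consequence of the hypotheses (it follows from \eqref{eq:c5} applied with $i,j$ chosen to contain the most $S_2$-elements, or more cleanly by a direct argument). As in the $s=2$ proof, the three blocks live in disjoint coordinate subspaces of $\F^r$, so independence of the whole set reduces to independence within each block; the $S_3$-block is automatic, the $S_2$-block reduces to independence of $\{u_i \otimes v_\ell\}$ which holds because each $v_\ell$ is used at most... \emph{twice but by two different $u$'s} — actually one must be careful here, since two distinct $\ell, \ell' \in S_2$ could be assigned... no, they get distinct basis vectors, so the $S_2$-block is a disjoint union over basis vectors of pairs $\{u_i \otimes e, u_j \otimes e\}$, automatically independent. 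The $S_1$-block is handled by Lemma~\ref{lem:disjoint-case} with parameters $(m, |S_1|, 3, r - |S_3| - |S_2|)$ applied to the sets $A_i \cap S_1$, whose hypotheses $|A_i \cap S_1| \le r - |S_3| - |S_2|$ and $\sum_i |A_i \cap S_1| = |S_1| \le 3(r - |S_3| - |S_2|)$ must be derived from \eqref{eq:c2}–\eqref{eq:c4}.

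The main obstacle I anticipate is the $S_2$-block: unlike the $s=2$ case, where each $S_2$-element forced exactly a $2$-dimensional contribution that summed cleanly, here the interaction between the $S_2$-pairs and the surviving $3$-planes $\F^3 \otimes V_{S_3}$ is governed by the two inequalities \eqref{eq:c3} and \eqref{eq:c5} simultaneously, and it is not a priori obvious that assigning the $S_2$-vectors to \emph{any} distinct basis vectors in a single block makes the combined set $\{u_i \otimes v_\ell : \ell \in S_2 \cup S_3, (i,\ell)\in E\}$ independent — one may instead need to route the $S_2$-elements whose pair is $\{i,j\}$ into a subspace carefully chosen so that the "$A_i$-load" and "$A_j$-load" never exceed $r$, which is exactly the content of \eqref{eq:c3} and \eqref{eq:c5}. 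Concretely, I expect the correct bookkeeping is: the vectors indexed by $S_2 \cup S_3$ that involve $A_i$ must fit in an $r$-dimensional space, which after the reduction becomes a constraint on how many $S_2$-basis-vectors can be "charged" to row $i$; balancing this over all $i$ is a small combinatorial feasibility problem (a bipartite-degree / Hall-type condition) whose solvability is equivalent to \eqref{eq:c2}, \eqref{eq:c3}, \eqref{eq:c5}. Verifying that these three inequalities are exactly the feasibility conditions — and in particular that no further inequality is needed — is the technical heart of the argument, and I would isolate it as a separate combinatorial lemma before assembling the final proof.
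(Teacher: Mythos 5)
Your necessity argument for \eqref{eq:c3} does not go through as stated. You propose to use $u \otimes V_{(A_i \cap A_j)\setminus S_3} \subseteq W$ for ``any third index'' $u$, but for an arbitrary vector $u$ there is no reason this subspace should lie inside $W$: the elements of $E$ over $A_i \cap A_j$ only give you $u_i \otimes V_{A_i \cap A_j}$ and $u_j \otimes V_{A_i \cap A_j}$, so $u \otimes V_{A_i \cap A_j} \subseteq W$ requires $u \in \Span\{u_i, u_j\}$. To simultaneously get $u \otimes V_{(A_k \cap A_\ell) \setminus S_3} \subseteq W$ one must have $u \in \Span\{u_k, u_\ell\}$ as well, so the correct choice is a nonzero $u$ in the intersection $\Span\{u_i, u_j\} \cap \Span\{u_k, u_\ell\}$, which exists because two $2$-planes in $\F^3$ meet. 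That intersection phenomenon is precisely why \eqref{eq:c3} involves four distinct indices and is new at $s=3$.

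The sufficiency strategy has a more serious structural flaw. You assign each $\ell \in S_2$ a distinct basis vector and then put the $S_1$-vectors generically in the remaining $r - |S_3| - |S_2|$ coordinates. Both steps rest on claims that are false. First, $|S_2| + |S_3| \le r$ does \emph{not} follow from the hypotheses: take $m = 3$, $r$ even, $S_1 = S_3 = \emptyset$, $A_1 = B_{12} \cup B_{13}$, $A_2 = B_{12} \cup B_{23}$, $A_3 = B_{13} \cup B_{23}$ with $|B_{12}| = |B_{13}| = |B_{23}| = r/2$ and pairwise disjoint. All five inequalities hold (with equality in \eqref{eq:c2}, \eqref{eq:c5}, \eqref{eq:c4}; \eqref{eq:c1} and \eqref{eq:c3} are vacuous), $E$ has size $3r$ and is a basis of $\mathrm{T}_{3,n}(3,r,p)$, yet $|S_2| = 3r/2 > r$, so there is no room to give every $S_2$-element its own basis vector. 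Second, even when $|S_2| + |S_3| \le r$, the disjoint lemma applied to the $S_1$-block needs $|S_1| \le 3(r - |S_3| - |S_2|)$, but \eqref{eq:c4} only gives $|S_1| \le 3(r - |S_3| - |S_2|) + |S_2|$, which is off by $|S_2|$. This discrepancy is not an artifact: in $\F^3$ an $S_2$-element only occupies two of the three $u$-slots over its basis direction, and the third slot must be recycled for an $S_1$-element; allocating $S_1$ and $S_2$ to disjoint blocks throws that slot away. Your $s=2$ template does not face this because there an $S_2$-element fills all $s=2$ slots, so the arithmetic $|S_1| + 2|S_2| \le 2r$ lines up exactly.

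The paper's proof consequently abandons the one-pass construction. After reducing to $S_3 = \emptyset$, it is an induction on $m$ and $r$: when one of the simplified inequalities is tight it quotients $\F^3 \otimes \F^r$ by a subspace of the form $u \otimes \F^r$, $\Span\{u_i,u_j\} \otimes \F^r$, or $\F^3 \otimes \langle v \rangle$ and reduces to the $s=2$ case or to smaller $r$; when none is tight and $S_1, S_2 \ne \emptyset$ it places one $S_1$-element and one $S_2$-element on the same basis vector $e_1$ and quotients by $\F^3 \otimes \langle e_1\rangle$; and the final case $S_1 = \emptyset$ with \eqref{eq:c4} tight requires choosing three $S_2$-elements whose three associated $2$-planes of $u$'s together span $\F^3 \otimes \F^2$, a delicate selection problem. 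You correctly sense at the end that the bookkeeping must be a ``feasibility problem,'' but the Hall-type lemma you defer to is not substantiated, and the proposal does not contain the tight-case reductions where essentially all of the proof's content resides.
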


\begin{figure}[t]
\begin{center}
\begin{tikzpicture}
\newcommand{\cbb}{\Huge $\star$}
\newcommand{\cbr}{\Huge $\diamond$}

\fill[red!50!white] (0,4) rectangle (3,3);
\fill[red!50!white] (3,3) rectangle (6,2);

\fill[red!50!white] (7,2) rectangle (9,1);
\fill[red!50!white] (8,2) rectangle (9,0);
\fill[red!50!white] (6,1) rectangle (7,-1);
\fill[red!50!white] (6,0) rectangle (8,-1);

\fill[blue!40!white] (0,-1) rectangle (6,2);

\draw (0.5,3.5) node {\cbr};
\draw (1.5,3.5) node {\cbr};
\draw (2.5,3.5) node {\cbr};

\draw (3.5,2.5) node {\cbr};
\draw (4.5,2.5) node {\cbr};
\draw (5.5,2.5) node {\cbr};

\draw (6.5,0.5) node {\cbr};
\draw (7.5,1.5) node {\cbr};
\draw (8.5,1.5) node {\cbr};
\draw (6.5,-0.5) node {\cbr};
\draw (7.5,-0.5) node {\cbr};
\draw (8.5,0.5) node {\cbr};
\draw (0.5,1.5) node {\cbb};
\draw (0.5,0.5) node {\cbb};
\draw (1.5,1.5) node {\cbb};
\draw (1.5,0.5) node {\cbb};
\draw (2.5,1.5) node {\cbb};
\draw (2.5,0.5) node {\cbb};

\draw (0.5,-0.5) node {\cbb};
\draw (1.5,-0.5) node {\cbb};
\draw (2.5,-0.5) node {\cbb};
\draw (3.5,-0.5) node {\cbb};
\draw (4.5,-0.5) node {\cbb};
\draw (5.5,-0.5) node {\cbb};

\draw (3.5,1.5) node {\cbb};
\draw (3.5,0.5) node {\cbb};
\draw (4.5,1.5) node {\cbb};
\draw (4.5,0.5) node {\cbb};
\draw (5.5,1.5) node {\cbb};
\draw (5.5,0.5) node {\cbb};

\draw (0,-1) grid (9,4);
\end{tikzpicture}
\end{center}
\caption{(red $\diamond$) A circuit of $\mathrm{T}_{5,9}(3,4,p)$ which violates \eqref{eq:c5} but none of \eqref{eq:c1},\eqref{eq:c2},\eqref{eq:c3}, or \eqref{eq:c4}. (blue $\star$) A corresponding Laman circuit in $\mathcal{B}_{5,9}(2,5)$. See Figure~\ref{fig:nonlaman} for how to interpret.}\label{fig:5925}
\end{figure}

\begin{example}
A natural question that one may ask concerning Lemma~\ref{lem:m-a=3} is whether each of these constraints is ``strictly'' necessary. We note that (\ref{eq:c1}), (\ref{eq:c2}), and (\ref{eq:c4}) capture many simple Laman conditions, showing that sets such as $E = [4] \times [1]$, $E = [1] \times [r+1]$, and $E = \{(i,i) : i \in [3r+1]\}$ are dependent in $\mathrm{T}_{m,n}(3,r,p)$.

The role of (\ref{eq:c5}) is to capture more complex Laman conditions. To see why, assuming for simplicity that $|E| \le 3r$ and that $S_3 =  \emptyset$. Pick the sets $S = [m] \setminus \{i,j\}$ and $T = [n] \setminus (S_2 \setminus (A_i \cup A_j))$. Note that $|S| = m-2 = a+1$ and that $E \cap (S \times T) = \emptyset$. Therefore, the Laman condition imposes that $|T| \le n-r$. Thus, $|S_2 \setminus (A_i \cup A_j)| \ge r$. Since $3r \ge |E| \ge |A_1| + |A_2| + 2|S_2 \setminus (A_i \cup A_j)|$, we may deduce (\ref{eq:c5}). See Figure~\ref{fig:5925} for an example a circuit caught by (\ref{eq:c5}) and the corresponding Laman condition violation.

The role of (\ref{eq:c3}) is to systematically capture non-Laman circuits similar to the one described in Figure~\ref{fig:nonlaman}. 
\end{example}

\begin{proof}
Let $W = \Span \{u_i \otimes v_j : (i,j) \in E\}$.   For any set $A\subseteq [n] $, let $V_A$ be the subspace of $\F^r$ spanned by the generic vectors $v_i$ for $i\in A$.

The necessity of (\ref{eq:c1}) and (\ref{eq:c4}) is obvious. To show the necessity of \eqref{eq:c2}, observe that $\F^3 \otimes V_{S_3} \subseteq W$. Thus, $u_i \otimes V_{S_3} \subseteq W$. Also note that $u_i \otimes V_{A_i \setminus S_3} \subseteq W$. For these two subspaces to be linearly independent we must have $|A_i \setminus S_3|+|S_3|\le r$.

Next, we show \eqref{eq:c3} is necessary. Consider distinct $i,j,k,\ell \in [m]$ and let $u$ be a nontrivial vector in $\Span\{u_i, u_j\} \cap \Span\{u_k, u_\ell\}$. As before, $u \otimes V_{S_3} \subseteq W$. Further, $u \otimes V_{(A_i \cap A_j) \setminus S_3} \subseteq W$ and $u \otimes V_{(A_k \cap A_\ell) \setminus S_3} \subseteq W$. If these subspaces are linearly independent, then (\ref{eq:c3}) must hold. 

To finish the proof of the necessity of the inequalities, we show \eqref{eq:c5} is necessary. Consider distinct $i,j \in [m]$. For each $k \in S_2 \setminus (S_3 \cup A_i \cup A_j)$, let $u'_k$ be a nontrivial vector in $\Span \{u_i, u_j\} \cap \Span \{u_a, u_b\}$, where $a$ and $b$ are the two sets such that $k \in A_a \cap A_b$. Observe that $u_i \otimes V_{A_i \setminus S_3}, u_j \otimes V_{A_j \setminus S_3}, \sum_{k \in S_2 \setminus (S_3 \cup A_i \cup A_j)} u'_k \otimes v_{k},$ and $\Span\{u_i, u_j\} \otimes V_{S_3}$ are linearly independent subspaces of $W \cap (\Span\{u_i, u_j\} \otimes \F^{r})$.  This proves the necessity of \eqref{eq:c5}.

Next, we prove the sufficiency of these equations by specializing some of the vectors. The strategy is to induct by  finding a nice subspace to quotient by. We first use this strategy to reduce to the case $S_3=\emptyset$.

\begin{claim}
$E$ is independent in $\mathrm{T}_{m,n}(3,r,p)$ if and only if $E \setminus \{(i, j) : j \in S_3\}$ is independent in $\mathrm{T}_{m,n}(3,r-|S_3|,p)$.
\end{claim}
\begin{proof}
Let $e_1,\hdots,e_r$ be a basis of $\F^r$. For each $i\in S_3$, we set $v_i$ to a different basis element (which is possible as $|S_3|\le r$ by \eqref{eq:c4}). For $j\in [n]\setminus S_3$ we choose the $v_j$ to be generic vectors in the subspace spanned by $e_{|S_3|+1},\hdots,e_r$. We have that $E \setminus \{(i, j) : j \in S_3\}$ is independent in $\mathrm{T}_{m,n}(3,r-|S_3|,p)$ if and only if $\{u_i\otimes v_j : (i,j)\in E,j\not\in S_3\}$ is independent in $\mathrm{T}_{m,n}(3,r,p)$.
We see that
$$\operatorname{span} \{u_i\otimes v_j : (i,j)\in E,j\in S_3\} \cap \operatorname{span} \{u_i\otimes v_j : (i,j)\in E,j\not\in S_3\} = 0.$$ As the $u_i$ are generic, $\{u_i\otimes v_j : (i,j)\in E,j\in S_3\}$ is independent in $\mathrm{T}_{m,n}(3,r,p)$ if and only if $E \setminus \{(i, j) : j \in S_3\}$ is independent in $\mathrm{T}_{m,n}(3,r-|S_3|,p)$, so this implies the result. 
\end{proof}

 With the assumption $S_3 = \emptyset$, we have a much simpler set of inequalities:
\begin{align}
|A_i| &\le r && \text{ for all } i\in [n]\label{eq:d1}\\
|A_i \cap A_j| + |A_k \cap A_\ell| &\le r && \text{ for all distinct } i,j,k, \ell \in [n]\label{eq:d2}\\
|A_i| + |A_j| + |S_2 \setminus (A_i \cup A_j)| &\le 2r && \text{ for all distinct } i,j\in [n]\label{eq:d3}\\
|S_1| + 2|S_2| &\le 3r.\label{eq:d4}
\end{align}

We will proceed via induction on $m$ and considering several cases.

\paragraph{\bf Case 1, \eqref{eq:d1} is tight:} Without loss of generality, assume that $i=1$ in \eqref{eq:d1}. Since $|A_1|=r$, we have that $V_{A_1} = \textbf{k}^r$. 
For $i \ge 2$, let $u_i'$ be the image of $u_i$ in $\F^3/\langle u_1 \rangle$. Then the $u'_i$ are generic vectors in $\F^3/\langle u_1 \rangle$.

We claim that the $(i, j) \in E$ with $i \ge 2$ give an independent set in $\mathrm{T}_{m-1,n}(2,r,p)$. This follows by checking the inequalities of Lemma~\ref{lem:m-a=2}: \eqref{eq:A1} follows because $|S_3|=0$, \eqref{eq:A2} follows from \eqref{eq:d3} for $A_1$ and $A_i$, and \eqref{eq:A3} follows from \eqref{eq:d4}. This means $\{u'_i\otimes v_j : (i,j)\in \bigcup_{k=2}^m \{k\}\times A_k\}$ is  linearly independent, which implies $\{u_i\otimes v_j :  (i,j)\in E\}$ is linearly independent.

\paragraph{\bf Case 2, \eqref{eq:d2} is tight:} Without loss of generality, assume that $(i,j,k,\ell) = (1,2,3,4)$. As the $u_i$ are generic vectors, any three of them are linearly independent and there is no common non-zero vector in the span of any three disjoint pairs $(u_{i_1},u_{i_2}),(u_{i_3},u_{i_4}),(u_{i_5},u_{i_6})$.

Suppose $|A_1\cap A_2|+|A_3\cap A_4|=r$, so $\textbf{k}^r = V_{A_1 \cap A_2} + V_{A_3 \cap A_4}$. Let $u$ be a non-zero vector in $\Span\{u_1, u_2\} \cap \Span\{u_3, u_4\}$.  We quotient $\textbf{k}^3 \otimes \textbf{k}^r$ by $  u\otimes V_{A_1\cap A_2} +    u\otimes V_{A_3\cap A_4} =   u \otimes \textbf{k}^r$. Let $u'_j$ be the image of $u_j$ in $\F^3/\langle u \rangle$.

We set $A'_1=A_1 \cup A_2$ and $A'_2=A_3 \cup A_4$. We note $u'_1,u'_2$ are scalar multiples of each other and so are $u'_3,u'_4$. Observe that  $\{u'_1,u'_3,u'_5, \dotsc, u'_m\}$ is a set of generic vectors, i.e., any pair is linearly independent. 

Let $E'=(\{1\}\times A'_1) \cup (\{3\}\times A'_2) \cup \bigcup_{i=5}^m (\{i\}\times A_i)$. We claim that $\{u'_i\otimes v_j : (i,j)\in E'\}$ is linearly independent in $(\F^3/\langle u \rangle)\otimes \F^r$. This follows by checking the conditions in Lemma~\ref{lem:m-a=2} for $A'_1,A'_2,A_5,\hdots,A_m$. Indeed, \eqref{eq:A1} easily follows. Let $B = (A_1 \cap A_2) \cup (A_3 \cap A_4)$, which has size exactly $r$ because $A_1 \cap A_2 \cap A_3 \cap A_4 = \emptyset$. We see that $|A'_1|+|A'_2|+\sum_{i=5}^m |A_i|=\sum_{i=1}^m |A_i|-|B|\le 2r$, so \eqref{eq:A3} holds. To check \eqref{eq:A2}, we use \eqref{eq:d3} for $A_1,\hdots,A_m$ and the fact that the indices which appear in two sets for $A'_1,A'_2,A_5,\hdots,A_m$ are precisely $S_2\setminus B$, and $|S_2\setminus B|=|S_2|-|B|=|S_2|-r$.  

If we have a non-zero linear combination of $\{u_i\otimes v_j : (i,j)\in E\}$ which is zero, then quotienting by $u\otimes \F^r$ and using the linear independence of $\{u'_i\otimes v_j : (i,j)\in E'\}$ gives a contradiction.

\paragraph{\bf Case 3, \eqref{eq:d3} is tight:} Without loss of generality, assume that $|A_1|+|A_2|+|S_2\setminus (A_{1}\cup A_2)|=2r$.  For each $i \in S_2 \setminus (A_1 \cup A_2)$, let $u'_i$ be a non-zero element of $\Span\{u_1,u_2\} \cap \Span\{u_j,u_k\}$, where $j, k$ are the two indices such that $i \in A_j \cap A_k$. 

\begin{claim}\label{claim:m-a=3C3Span}
The three subspaces $\langle u_1\rangle \otimes V_{A_1}, \langle u_2\rangle \otimes V_{A_2},$ and $\sum_{i \in S_2 \setminus (A_1 \cup A_2)} \langle u'_i\rangle  \otimes \langle v_{i}\rangle $ span $\Span\{u_1, u_2\} \otimes \F^{r}.$
\end{claim}
\begin{proof}
We see that $\langle u_1\rangle  \otimes V_{A_1}, \langle u_2\rangle \otimes V_{A_2},$ and  $\sum_{i \in S_2 \setminus (A_1 \cup A_2)} \langle u'_i \rangle \otimes \langle v_{i} \rangle $ span a subspace of $\Span\{u_1, u_2\} \otimes \F^{r}$.
We will show it spans the whole space by proving that $\{u_1 \otimes v_i:i\in A_1\}, \{u_2 \otimes v_j: j\in A_2\},$ and  $\{u'_i \otimes v_{i} : i \in S_2 \setminus (A_1 \cup A_2)\}$  are linearly independent. 

We will use Lemma~\ref{lem:m-a=2} for this. First note that, because the $u_i$ are generic, the vectors $\{u_1, u_2\} \cup \{u'_i : i\in S_2 \setminus (A_1 \cup A_2)\}$ are generic vectors in $\Span\{u_1, u_2\}$. In the collection of sets $\{A_1,A_2\} \cup \{\{i\} : i \in S_2 \setminus (A_1 \cup A_2)\}$, the only non-empty pairwise intersection is between $A_1$ and $A_2$. As $|A_1\cap A_2|<r$, we can use this to check \eqref{eq:A1},\eqref{eq:A2}, and \eqref{eq:A3}.
\end{proof}

Suppose there is a dependence relation in  $\{u_i\otimes v_j :  (i,j)\in E\}$:
\begin{equation}\label{eq:m-a=3C3}
    \sum\limits_{(i,j)\in E} \lambda_{i,j} u_i\otimes v_j = 0.
\end{equation}

Consider the image of this relation in $\mathbf{k}^3/ \Span\{u_1, u_2\} \otimes \mathbf{k}^r$. All $u_i,i\ge 3$ are projected to non-zero scalar multiples, say $\alpha_i$, of the same vector $u$. Then \eqref{eq:m-a=3C3} becomes,
\begin{equation}\label{eq:m-a=3C3proj}
    \sum\limits_{i=3}^{m}\sum\limits_{j\in A_i\setminus S_2} \alpha_i \lambda_{i,j} u\otimes v_j + \sum\limits_{i\ne j\in [m]\setminus \{1,2\}}\sum\limits_{k\in A_i\cap A_j} (\alpha_i \lambda_{i,k}+\alpha_j \lambda_{j,k}) u\otimes v_k=0. 
\end{equation}

Note $\{v_j : j\in A_i\setminus S_2,i\ge 3\} \cup \{v_k : k\in A_i\cap A_j,i\ne j\in [m]\setminus \{1,2\}\}$ is a set of at most $r$ distinct vectors (we use $S_1+2S_2\le 3r$ and $|A_1|+|A_2|+|S_2\setminus (A_1\cup A_2)|=2r$). Thus $\lambda_{i,j}=0$ for $i\ge 3, j\in A_i\setminus S_2$ and $\alpha_i \lambda_{i,k}+\alpha_j \lambda_{j,k}=0$ for $i\ne j\in [m]\setminus \{1,2\},k\in A_i\cap A_j$.

Using this in \eqref{eq:m-a=3C3} gives us a linear dependence  which contradicts Claim~\ref{claim:m-a=3C3Span}.

\paragraph{\bf Case 4, $|S_2|=0$:}
This is just the disjoint case (Lemma~\ref{lem:disjoint-case}).

\paragraph{\bf Case 5, $|S_1|\ne 0,|S_2|\ne 0$ and \eqref{eq:d1},\eqref{eq:d2},\eqref{eq:d3} are not tight:}
As $m \ge 3$, without loss of generality we can assume that $A_1 \cap S_1 \neq \emptyset$ and $A_2 \cap A_3 \neq \emptyset$. After relabeling, let $i \in A_1$ be in $S_1$ and $j\in A_2\cap A_3$. We now set $v_i=v_j=e_1$ and quotient by $\F^3\otimes e_1\cong \Span\{u_1,u_2,u_3\}\otimes e_1$. Set $A'_1=A_1\setminus \{i\}$, $A'_2=A_2\setminus \{j\}$, $A'_3=A_3\setminus \{j\}$, and $A'_k=A_k$ for $k\ge 4$. Then $A'_1,\hdots,A'_m$ satisfy the inequalities \eqref{eq:d1},\eqref{eq:d2},\eqref{eq:d4} for $\mathbf{k}^3 \otimes 
\mathbf{k}^{r-1}$. The only nontrivial check is for \eqref{eq:d3}, but if we re-write \eqref{eq:d3} as $|S_2\cup A_1\cup A_2| + |A_1\cap A_2|\le 2r-1$ (as \eqref{eq:d3} is not tight) then we see $A'_i$ satisfy the needed inequality.

\paragraph{\bf Case 6, $|S_1|= 0,|S_2|\ne 0$ and \eqref{eq:d1},\eqref{eq:d2},\eqref{eq:d3} are not tight:}
If \eqref{eq:d4} is not tight then $2|S_2|\le 3r-1$. Choose $i \in S_2$ and quotient by $\mathbf{k}^3 \otimes \langle v_i \rangle$. The new sets will satisfy the inequalities for $\mathbf{k}^3 \otimes \mathbf{k}^{r-1}$.

From now on we assume \eqref{eq:d4} is tight, which means $|S_2| = 3r/2$ (which also implies that $r$ is even). In that case (\ref{eq:d3}) simplifies even further to $|A_i \cap A_j| \le r/2-1$ (as \eqref{eq:d3} is not tight), which also makes (\ref{eq:d2}) redundant.

We claim that if $r \ge 1$, we can always pick $i_1, i_2, i_3 \in S_2$  with the following properties. Let $j_1,j'_1,j_2,j'_2,j_3,j'_3 \in [m]$ be such that $i_1 \in A_{j_1} \cap A_{j'_1}$, and so forth. Further, set $A'_i = A_i \setminus \{i_1,i_2,i_3\}$ for all $i \in [m]$. It is not hard to see that $|A'_1| + \cdots + |A'_m| = 3(r-2)$. We claim that the following holds:
\begin{itemize}
\item If $e_1, e_2 \in \F^2$ are standard basis vectors, then $\Span\{u_{j_1}, u_{j'_1}\} \otimes e_1 + \Span\{u_{j_2}, u_{j'_2}\} \otimes e_2 + \Span\{u_{j_3}, u_{j'_3}\} \otimes (e_1 + e_2) = \F^3 \otimes \F^2$.
\item $|A'_i| \le r-2$ for all $i \in [m]$.
\item $|A'_i \cap A'_j| \le (r-2)/2$ for $i \neq j \in [m]$.
\end{itemize}

If all these properties are satisfied, then we can use induction to finish this case. By setting $v_{i_1}=e_1,v_{i_2}=e_2,$ and $v_{i_3}=e_1+e_2$ and quotienting out $\F^3 \otimes \Span\{e_1,e_2\}$, we reduce to a smaller case on $\F^3 \otimes \F^{r-2}$. 

The third condition follows directly as $|A_i\cap A_j|\le r/2-1$.
One can check that the first condition is satisfied when the pairs $\{j_1,j'_1\},\{j_2,j'_2\},\{j_3,j'_3\}$ are distinct and no index appears in the multiset $\{j_1,j'_1,j_2,j'_2,j_3,j'_3\}$ more than twice.\footnote{And this is not a characteristic-dependent condition.} As $|A_i\cap A_j|\le r/2-1$ for every $i,j$ and $S_2=3r/2$,  we have at least three pairs to choose from. We would be forced to pick the same index thrice if there are only 4 sets $A_1,A_2,A_3,A_4$ and one of them intersects with the rest but the rest do not intersect with each other, but that would contradict that $S_2=3r/2$.

For the second condition, if $|A_i| > r-2$, then $i$ must appear at least $|A_i| - (r-2)$ times in $\{j_1,j'_1,j_2,j'_2,j_3,j'_3\}$. As $|A_i|\le r-1$ we only have to ensure $|A_i|=r-1$ happens at most 6 times. If it happens 7 times, say for $A_1,\hdots,A_7$ then $|A_i\cap A_j|\ge r/2-2$ for $i\ne j=1,\hdots,7$. This gives us that $21r/2-42\le |S_2|=3r/2$ which implies $r\le 42/9$. As $r$ is even we have $r\in \{2,4\}$. In either case we have $|S_2|\le 6$ and $|A_i|=r-1$ for $i=1,\hdots,7$, so the fact that $|S_3|=|S_1|=0$ leads to a contradiction.
\end{proof}

\begin{proof}[Proof of Theorem~\ref{thm:smallm-a}]
The description of $\mathrm{T}_{m,n}(s,r,0)$ when $s \le 3$ above shows that we can check independence in $\mathrm{T}_{m,n}(s,r,0)$ by checking polynomially many conditions (the case $s=0$ is immediate).  By Theorem~\ref{thm:equivalence}, $\mathrm{T}_{m,n}(s,r,0)$ is dual to $\mathcal{B}_{m,n}(m-s,n-r)$, so a set is independent in $\mathrm{T}_{m,n}(s,r,0)$ if and only if its complement is spanning in $\mathcal{B}_{m,n}(m-s,n-r)$. 
By \cite{MatroidComplexity}, there is a polynomial time algorithm to compute the rank function of a matroid by checking if polynomially many sets are spanning. 
\end{proof}

\subsection{Characterizing the Laman condition}

In this section, we prove Corollary~\ref{cor:laman}. One consequence of Corollary~\ref{cor:laman} is that, when $m - a \le 2$, the Laman circuits are the circuits of a matroid; Example~\ref{ex:nonlaman} shows that this is false in general. It will be necessary to prove this directly in order to prove Corollary~\ref{cor:laman}. 

Any subset $U$ of $[m] \times [n]$ is contained in some \emph{minimal rectangle} $S \times T$. We say that $U$ \emph{violates the Laman condition} if there is some rectangle $S \times T$ with $|S| \ge a, |T| \ge b$ such that $|U \cap S \times T| > |S|b + |T|a - ab$. Clearly any subset which violates the Laman condition contains a minimal subset which violates the Laman condition. In general, a minimal subset which violates the Laman condition need not contain a Laman circuit (which are, by definition, minimal dependent sets).

\begin{lemma}\label{lem:circuitelim}
Let $C_1, C_2$ be distinct minimal sets which violate the Laman condition whose minimal rectangles are $S_1 \times T_1$ and $S_2 \times T_2$, respectively. Suppose that $|S_1 \cap S_2| \ge a$. Then, for any $e \in C_1 \cup C_2$, $C_1 \cup C_2 \setminus e$ violates the Laman condition. 
\end{lemma}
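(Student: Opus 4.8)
The statement is a "circuit elimination"-type axiom for minimal violators of the Laman condition, and the natural strategy is the standard submodularity argument used to verify such axioms, applied to the function that counts the defect of the Laman inequality. Define, for a rectangle $R = S \times T$ with $|S| \ge a$, $|T| \ge b$ and a set $U$, the defect
\[
  f_R(U) = |U \cap (S \times T)| - \bigl(|S|b + |T|a - ab\bigr),
\]
so that $U$ violating the Laman condition on $R$ means $f_R(U) > 0$. The plan is: first, because $C_1, C_2$ are \emph{minimal} violators with minimal rectangles $S_1\times T_1$, $S_2\times T_2$, I would observe that $C_i$ is exactly a set that achieves $f_{S_i\times T_i}(C_i) = 1$ (one more edge than the Laman bound), that $C_i \subseteq S_i\times T_i$, and that every proper subrectangle of $S_i\times T_i$ is satisfied by $C_i$ — minimality forces $C_i$ to "just barely" violate on $S_i\times T_i$ and nowhere smaller. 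This rigidity of minimal violators is what makes the elimination work.

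Next, the key computation: consider the two rectangles $R_\cup = (S_1\cup S_2)\times(T_1\cup T_2)$ and $R_\cap = (S_1\cap S_2)\times(T_1\cap T_2)$. The right-hand side $|S|b+|T|a-ab$ of the Laman bound, as a function of $(|S|,|T|)$, is "supermodular" in the relevant sense: writing $g(s,t) = sb+ta-ab$, one has $g(|S_1\cup S_2|,|T_1\cup T_2|) + g(|S_1\cap S_2|,|T_1\cap T_2|) = g(|S_1|,|T_1|) + g(|S_2|,|T_2|)$ because $g$ is affine and $|S_1|+|S_2| = |S_1\cup S_2| + |S_1\cap S_2|$ (ditto for $T$). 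Meanwhile the edge count satisfies $|(C_1\cup C_2)\cap R_\cup| + |(C_1\cup C_2)\cap R_\cap| \ge |(C_1\cup C_2)\cap(S_1\times T_1)| + |(C_1\cup C_2)\cap(S_2\times T_2)| \ge |C_1| + |C_2|$, using $C_i\subseteq S_i\times T_i$ and inclusion–exclusion for the grid sets $S_i\times T_i$ (note $(S_1\times T_1)\cup(S_2\times T_2)\subseteq R_\cup$ and $(S_1\times T_1)\cap(S_2\times T_2) = R_\cap$). Subtracting, I get $f_{R_\cup}(C_1\cup C_2) + f_{R_\cap}(C_1\cup C_2) \ge f_{S_1\times T_1}(C_1) + f_{S_2\times T_2}(C_2) = 2$. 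The hypothesis $|S_1\cap S_2|\ge a$ (together with $b\le |T_1\cap T_2|$, which I need to check also holds — if $T_1 \cap T_2$ is too small one works with $T_1$ or $T_2$ directly, or the edge count on $R_\cap$ is $0$ and the inequality is even easier) guarantees $R_\cap$ is an admissible rectangle for the Laman condition.

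Now I argue that $f_{R_\cap}(C_1\cup C_2) \le 1$: since $C_1, C_2$ are distinct minimal violators, if $R_\cap$ were a "small" rectangle strictly inside both $S_i\times T_i$ then each $C_i$ satisfies Laman on it, and one bounds $|(C_1\cup C_2)\cap R_\cap|$ using that the overlap $C_1\cap C_2\subseteq R_\cap$ can't be too large — here is where I must be careful, and this is the main obstacle: controlling $f_{R_\cap}(C_1 \cup C_2)$ from above requires knowing that neither $C_1$ nor $C_2$ is \emph{itself} essentially supported on $R_\cap$, which uses minimality and distinctness. Once $f_{R_\cap}(C_1\cup C_2)\le 1$ is established, the displayed inequality forces $f_{R_\cup}(C_1\cup C_2)\ge 1$, i.e. $C_1\cup C_2$ violates Laman on $R_\cup$ with at least one edge to spare; hence deleting any single $e\in C_1\cup C_2$ still leaves $f_{R_\cup}(C_1\cup C_2\setminus e)\ge 0$. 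Finally I need strict violation after deletion: if $f_{R_\cup} = 0$ exactly after removing $e$, I fall back on $f_{R_\cap}$ — the point of the $\ge 2$ total is precisely that we have budget to lose one edge and one unit of slack and still have a violator somewhere. The bookkeeping of which of the two rectangles inherits the surviving violation after deleting $e$ is the last step, and I expect that and the upper bound $f_{R_\cap}(C_1\cup C_2) \le 1$ to be the two places where the minimality and distinctness of $C_1$, $C_2$ must genuinely be used rather than just the supermodular arithmetic.
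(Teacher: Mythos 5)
Your high-level framework (supermodularity of the Laman bound, inclusion--exclusion, then control the "intersection" term) is the right one and matches the paper's approach. But the proposal has a genuine gap that I do not think your "budget" bookkeeping at the end can patch.

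First, the target bound is off. You aim to show $f_{R_\cap}(C_1\cup C_2)\le 1$, which via $f_{R_\cup}+f_{R_\cap}\ge 2$ gives only $f_{R_\cup}(C_1\cup C_2)\ge 1$, hence $f_{R_\cup}(C_1\cup C_2\setminus e)\ge 0$ --- no violation. Your fallback on $f_{R_\cap}$ does not rescue this: if $e\in R_\cap$ the summed inequality only gives $f_{R_\cup}(C_1\cup C_2\setminus e)+f_{R_\cap}(C_1\cup C_2\setminus e)\ge 0$, and even when it gives $\ge 1$ you cannot conclude either term is positive from a bound on their sum alone, since $f_{R_\cap}$ has no a priori lower bound. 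Second, and more structurally, the quantity you are trying to bound is not quite the right one. Because $C_1\cup C_2\subseteq(S_1\times T_1)\cup(S_2\times T_2)$, your first inequality is actually an equality, and your middle term equals $|C_1\cup C_2|+|(C_1\cup C_2)\cap R_\cap|$. The term $|(C_1\cup C_2)\cap R_\cap|$ can strictly exceed $|C_1\cap C_2|$ (edges of $C_1\setminus C_2$ or $C_2\setminus C_1$ can land in $R_\cap$), so there is no reason $f_{R_\cap}(C_1\cup C_2)\le 0$ should hold, which is what you would actually need. The clean route is to use the exact identity $|C_1\cup C_2|=|C_1|+|C_2|-|C_1\cap C_2|$ and bound $|C_1\cap C_2|\le b|S_1\cap S_2|+a|T_1\cap T_2|-ab$; that plus the affine identity for $g$ immediately gives $|C_1\cup C_2\setminus e|\ge g(|S_1\cup S_2|,|T_1\cup T_2|)+1$. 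Third, your treatment of the case $|T_1\cap T_2|<b$ as "even easier" is not right: $R_\cap$ is then not an admissible rectangle so the Laman-satisfaction argument is unavailable, and the bound on $|C_1\cap C_2|$ instead comes from the crude estimate $|C_1\cap C_2|\le|S_1\cap S_2|\cdot|T_1\cap T_2|=(a+\ell)|T_1\cap T_2|$ together with $b\ell\ge\ell|T_1\cap T_2|$, using $|S_1\cap S_2|\ge a$ and $|T_1\cap T_2|\le b$. This is a genuine separate case, not a degeneracy. Finally, in the main case $|T_1\cap T_2|\ge b$, the point where distinctness and minimality enter is that $C_1\cap C_2$ is a proper subset of the minimal violator $C_1$ (else $C_1\subseteq C_2$ forces $C_1=C_2$), hence does not itself violate Laman on $R_\cap$; you gesture at this but never actually deploy it.
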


\begin{proof}
We have $|C_1| \ge b |S_1| + a |T_1| - ab + 1$ and $|C_2| \ge b|S_2| + a |T_2| - ab + 1$. Therefore
$$|C_1 \cup C_2 \setminus e| \ge b(|S_1| + |S_2|) + a(|T_1| + |T_2|) - 2ab + 1 - |C_1 \cap C_2|.$$
We claim that we have the bound 
$$b|S_1 \cap S_2| + a|T_1 \cap T_2| -ab \ge |C_1 \cap C_2|.$$
Given this inequality, we see that $|C_1 \cup C_2 \setminus e| \ge b|S_1 \cup S_2| + a |T_1 \cup T_2| -ab + 1$. As $C_1 \cup C_2 \setminus e$ is contained in the rectangle $(S_1 \cup S_2) \times (T_1 \cup T_2)$, this implies the result.

First suppose that $|T_1 \cap T_2| \ge b$. Note that, because $C_1 \not= C_2$, $C_1 \cap C_2$ does not violate the Laman condition and is contained in $(S_1 \cap S_2) \times (T_1 \cap T_2)$. The claimed inequality follows. 

Now suppose that $|T_1 \cap T_2| \le b$. Write $|S_1 \cap S_2| = a + \ell$ for some $\ell \ge 0$. Then we have that
$$b|S_1 \cap S_2| + a|T_1 \cap T_2| -ab = b\ell + a |T_1 \cap T_2|.$$
The bound $|C_1 \cap C_2| \le |S_1 \cap S_2| \cdot |T_1 \cap T_2| = (a + \ell) |T_1 \cap T_2|$ implies the claim, as $b\ell \ge \ell |T_1 \cap T_2|$. 
\end{proof}

\begin{lemma}
For $m, n, a, b$ with $m - a \le 2$, the minimal sets which violate the Laman condition in $[m] \times [n]$ form the circuits of a matroid. 
\end{lemma}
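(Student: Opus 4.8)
The plan is to verify the circuit exchange axiom directly for the family $\mathcal{C}$ of minimal sets which violate the Laman condition in $[m]\times[n]$, using Lemma~\ref{lem:circuitelim} as the key tool. Recall that a nonempty family $\mathcal{C}$ of subsets of a ground set is the set of circuits of a matroid if and only if (i) no member of $\mathcal{C}$ is properly contained in another, and (ii) for any two distinct $C_1, C_2 \in \mathcal{C}$ and any $e \in C_1 \cap C_2$, there is $C_3 \in \mathcal{C}$ with $C_3 \subseteq (C_1 \cup C_2) \setminus e$. Property (i) is immediate from minimality: by definition, each member of $\mathcal{C}$ is a minimal set violating the Laman condition, so no one contains another. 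Since any set violating the Laman condition contains a minimal one, (ii) reduces to showing that $(C_1 \cup C_2)\setminus e$ violates the Laman condition whenever $C_1, C_2 \in \mathcal{C}$ are distinct and $e \in C_1 \cap C_2$.

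The main step, then, is to prove this elimination statement, and this is where the hypothesis $m - a \le 2$ enters. Let $S_1 \times T_1$ and $S_2 \times T_2$ be the minimal rectangles of $C_1$ and $C_2$. Lemma~\ref{lem:circuitelim} already handles the case $|S_1 \cap S_2| \ge a$, giving the conclusion even for arbitrary $e \in C_1 \cup C_2$. By the symmetry between rows and columns (swapping the roles of $m,a$ and $n,b$), the analogous statement with the hypothesis $|T_1 \cap T_2| \ge b$ also holds. So I would first dispense with those two cases by citing (and, for the transposed version, invoking the symmetry of) Lemma~\ref{lem:circuitelim}. It remains to treat the case $|S_1 \cap S_2| \le a - 1$ and $|T_1 \cap T_2| \le b - 1$ simultaneously. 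Here I would exploit $m - a \le 2$: since $S_1, S_2 \subseteq [m]$ and $|S_1 \cup S_2| \le m \le a + 2$, while $|S_1 \cap S_2| \le a-1$, we get a strong constraint, namely $|S_1| + |S_2| = |S_1 \cup S_2| + |S_1 \cap S_2| \le (a+2) + (a-1) = 2a+1$, forcing, say, $\min(|S_1|,|S_2|) \le a$. Since $C_i$ violates the Laman condition with minimal rectangle $S_i \times T_i$, we must have $|S_i| \ge a$ (otherwise the Laman threshold $|S_i|b + |T_i|a - ab$ is not even defined as a violated inequality, or more precisely the rectangle would not witness a violation), so in fact $\min(|S_1|, |S_2|) = a$ exactly, and the two $S_i$ are ``nearly equal.'' I would then combine this with the counting argument of Lemma~\ref{lem:circuitelim}: the crucial inequality $b|S_1 \cap S_2| + a|T_1 \cap T_2| - ab \ge |C_1 \cap C_2|$ needs to be re-examined, now using $|C_1 \cap C_2| \le |S_1 \cap S_2|\cdot|T_1 \cap T_2|$ together with the fact that $|S_1\cap S_2|$ is close to $a$ (it is either $a$ or $a-1$ given the constraints), to push through the same conclusion $|(C_1\cup C_2)\setminus e| \ge b|S_1\cup S_2| + a|T_1\cup T_2| - ab + 1$.

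I expect the main obstacle to be the residual case where both $|S_1 \cap S_2| < a$ and $|T_1 \cap T_2| < b$, because there Lemma~\ref{lem:circuitelim} does not apply in either orientation, and one genuinely needs the small-width hypothesis $m - a \le 2$ to restrict the combinatorial possibilities. In this regime $|S_1 \cap S_2| \in \{a-1, a\}$ is forced, and one should split further on whether it equals $a$ (handled by Lemma~\ref{lem:circuitelim}) or $a-1$; in the latter subcase $|S_1 \cup S_2| \in \{a, a+1, a+2\}$ and one has $\{|S_1|,|S_2|\}$ among a short list of values, so that the inequality $b|S_1\cap S_2| + a|T_1\cap T_2| - ab \ge |C_1\cap C_2|$ can be checked by a finite case analysis using $|C_1 \cap C_2| \le (a-1)|T_1 \cap T_2|$ and $|T_1 \cap T_2| \le b - 1$. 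A subtle point to get right is whether $C_1 \cap C_2$ can itself violate the Laman condition — it cannot, since then by minimality $C_1 \cap C_2$ would contain a member of $\mathcal{C}$ strictly smaller than $C_1$, contradicting minimality of $C_1$ unless $C_1 = C_2$; I would record this observation explicitly as it is what makes the bound on $|C_1 \cap C_2|$ via the Laman threshold of the rectangle $(S_1 \cap S_2) \times (T_1 \cap T_2)$ available whenever that rectangle is ``large enough'' in either dimension. Putting these pieces together gives property (ii), and hence $\mathcal{C}$ is the set of circuits of a matroid.
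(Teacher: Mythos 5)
Your proposal has the right overall shape (verify circuit elimination; invoke Lemma~\ref{lem:circuitelim} when $|S_1 \cap S_2| \ge a$), but it misses the single observation that makes the whole proof collapse into one case, and the fallback plan you propose for the ``residual case'' would not go through.

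The key fact you are missing is that the minimal rectangle $S_i \times T_i$ of a minimal Laman-violating set $C_i$ satisfies $|S_i| \ge a + 1$, not just $|S_i| \ge a$. Indeed, if $|S_i| = a$ then the Laman threshold is $|S_i|b + |T_i|a - ab = |T_i|a$, which equals the total number of cells in $S_i \times T_i$; no subset of that rectangle can exceed it, and any rectangle $S' \times T' \supseteq S_i \times T_i$ has at least as large a threshold, so $C_i$ cannot violate the Laman condition at all. Once you have $|S_1|, |S_2| \ge a+1$, the hypothesis $m - a \le 2$ immediately gives $|S_1 \cap S_2| = |S_1| + |S_2| - |S_1 \cup S_2| \ge (a+1)+(a+1)-(a+2) = a$, so Lemma~\ref{lem:circuitelim} applies in \emph{every} case and there is no residual case to treat. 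This is precisely the paper's proof.

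With only the weaker bound $|S_i| \ge a$, you are led to the conclusion $\min(|S_1|,|S_2|) = a$ in the residual case and to re-examine the bound $b|S_1 \cap S_2| + a|T_1 \cap T_2| - ab \ge |C_1 \cap C_2|$. But that inequality actually \emph{fails} there: with $|S_1 \cap S_2| = a-1$ the left side is $a|T_1 \cap T_2| - b$, while $|C_1 \cap C_2|$ can be as large as $(a-1)|T_1 \cap T_2|$, so the inequality would require $|T_1 \cap T_2| \ge b$, contradicting the assumption $|T_1 \cap T_2| \le b-1$. So the ``finite case analysis'' you sketch cannot push the same counting through; what you would eventually discover, if you carried it out carefully, is that the case is vacuous because $|S_i| = a$ is impossible. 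As written, though, you assert $\min(|S_1|,|S_2|) = a$ as a genuine possibility and propose an argument for it that does not work, which is a gap. Tightening $|S_i| \ge a$ to $|S_i| \ge a+1$ both closes the gap and eliminates the need for any case split beyond the single application of Lemma~\ref{lem:circuitelim}.
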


\begin{proof}
We must check the circuit elimination axiom. Let $C_1, C_2$ be distinct minimal sets which violate the Laman condition, and let $S_1 \times T_1$, $S_2 \times T_2$ be the minimal rectangles containing them. We have $|S_1| \ge a+1$ and $|S_2| \ge a+1$: if $|S_i| = a$, then no subset of $S_i \times T_i$ is large enough to violate the Laman condition. As $m - a \le 2$, this implies that $|S_1 \cap S_2| \ge a$. It then follows from Lemma~\ref{lem:circuitelim} that, for any $e \in C_1 \cup C_2$, $C_1 \cup C_2 \setminus e$ violates the Laman condition. 
\end{proof}

\begin{proposition}\label{prop:Lamanbipartite}
If $m - a \le 2$, then the circuits of $\mathcal{B}_{m,n}(a,b)$ are the minimal sets which violate the Laman condition.
\end{proposition}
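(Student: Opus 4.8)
My plan is to show that $\mathcal{B}_{m,n}(a,b)$ coincides with the matroid $\mathcal{L}$ on $[m]\times[n]$ whose circuits are the minimal sets violating the Laman condition --- which is a matroid by the previous lemma, since $m-a\le 2$ --- after which the statement about circuits is immediate. One containment is easy, and I would dispatch it first: if $I$ is independent in $\mathcal{B}_{m,n}(a,b)$ and $S\times T$ is a rectangle with $|S|\ge a$ and $|T|\ge b$, then $I\cap(S\times T)$ is independent in $\mathcal{B}_{m,n}(a,b)|_{S\times T}=\mathcal{B}_{|S|,|T|}(a,b)$, whose rank is $|S|b+|T|a-ab$, so $|I\cap(S\times T)|\le|S|b+|T|a-ab$; hence $I$ violates no Laman condition and is independent in $\mathcal{L}$. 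For the reverse containment it suffices to prove that \emph{every circuit of $\mathcal{B}_{m,n}(a,b)$ violates the Laman condition}: then a set independent in $\mathcal{L}$ contains no minimal Laman-violator, hence no circuit of $\mathcal{B}_{m,n}(a,b)$ (a circuit violating the Laman condition must contain a minimal one), so it is independent in $\mathcal{B}_{m,n}(a,b)$, and $\mathcal{B}_{m,n}(a,b)=\mathcal{L}$ follows.

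To carry out this main step I would take a circuit $C$ of $\mathcal{B}_{m,n}(a,b)$ with minimal rectangle $S\times T$. One checks, as before, that $|S|\ge a+1$ and $|T|\ge b+1$ ($\mathcal{B}_{|S|,|T|}(a,b)$ is a free matroid when $|S|\le a$ or $|T|\le b$, hence has no circuits), so after restricting to $S\times T$ and relabelling $|S|,|T|$ as $m,n$ I may assume $C$ meets every row and every column and that $m-a\in\{1,2\}$. Any circuit of $\mathcal{B}_{m,n}(a,b)$ has at most $mb+na-ab+1$ elements (the rank of $\mathcal{B}_{m,n}(a,b)$ being $an+bm-ab$), with equality precisely when the circuit is a spanning set, so it is enough to show $C$ is spanning. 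By Theorem~\ref{thm:equivalence}(\ref{bipartitecase}) the dual of $\mathcal{B}_{m,n}(a,b)$ is $\mathrm{T}_{m,n}(s,r,0)$ with $s:=m-a\in\{1,2\}$ and $r:=n-b$, and $C$ spans $\mathcal{B}_{m,n}(a,b)$ if and only if $H:=([m]\times[n])\setminus C$ is independent in $\mathrm{T}_{m,n}(s,r,0)$. Since $C$ is a cocircuit of $\mathrm{T}_{m,n}(s,r,0)$, its complement $H$ is a hyperplane, so $\operatorname{rank}_{\mathrm{T}}(H)=sr-1$; and ``$C$ meets every row and column'' translates to ``$H$ contains no full row $\{i\}\times[n]$ and no full column $[m]\times\{j\}$''. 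Thus everything reduces to the claim: \emph{for $s\le 2$, a hyperplane of $\mathrm{T}_{m,n}(s,r,0)$ with no full row and no full column has $sr-1$ elements, i.e.\ is independent.}

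For $s=1$ I would note that the $u_i\otimes v_j$ are nonzero scalar multiples of the $v_j$, so (cf.\ Corollary~\ref{lem:r=1}) the proper flats of $\mathrm{T}_{m,n}(1,r,0)$ are exactly the sets $[m]\times B$ with $|B|\le r-1$; a hyperplane is $[m]\times B$ with $|B|=r-1$, which contains a full column unless $r=1$, in which case $H=\emptyset$ is independent. For $s=2$ I would first dispose of $m=2$ (then $a=0$ and $\mathcal{B}_{2,n}(0,b)$ splits as a direct sum over the two rows, so no circuit meets both rows and there is nothing to check), and then assume $m\ge 3$. Choose a maximal independent $H'\subseteq H$, so $|H'|=2r-1$ and $H=\operatorname{cl}(H')$, and write $H'=\bigcup_i\{i\}\times A'_i$. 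By \eqref{eq:A1} each column lies in at most two of the $A'_i$; if a column $j$ lies in two, say $j\in A'_p\cap A'_q$, then for any $i\notin\{p,q\}$ (such $i$ exists since $m\ge 3$) the set $H'\cup\{(i,j)\}$ places $j$ in three parts, hence is dependent, so $(i,j)\in H$ --- but then column $j$ is full in $H$, contradicting the hypothesis. Hence the $A'_i$ are pairwise disjoint with $\sum_i|A'_i|=2r-1$, and a short verification with \eqref{eq:A2} and \eqref{eq:A3} shows that for $(i,j)\notin H'$ the set $H'\cup\{(i,j)\}$ stays independent unless $|A'_t|=r$ for some $t$; but $|A'_{i_0}|=r$ would make row $i_0$ full in $H$, again a contradiction. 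So $\operatorname{cl}(H')=H'$, and $H$ is independent, as needed.

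The main obstacle --- and the place where the hypothesis $m-a\le 2$ is genuinely used --- is the $s=2$ analysis of $\operatorname{cl}(H')$ through the description of $\mathrm{T}_{m,n}(2,r,p)$ in Proposition~\ref{lem:m-a=2}; all the real content sits in the bookkeeping of which columns are singly versus doubly covered by the $A'_i$ and which $|A'_t|$ can equal $r$. For $m-a=3$ the analogous claim is false: the circuit of $\mathcal{B}_{5,5}(2,2)$ in Example~\ref{ex:nonlaman} produces a dependent hyperplane of $\mathrm{T}_{5,5}(3,3,p)$ with no full row and no full column, so this route genuinely stops working once $m-a$ exceeds $2$ and one would need the extra inequalities of Proposition~\ref{lem:m-a=3}.
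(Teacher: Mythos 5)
Your proof is correct, and it takes a genuinely different route through the same central tool. Both arguments pass through the duality of Theorem~\ref{thm:equivalence}(\ref{bipartitecase}) and the description of $\mathrm{T}_{m,n}(2,r,p)$ in Proposition~\ref{lem:m-a=2}, but the direction of attack differs. The paper's proof compares \emph{bases}: it shows that a set $F$ of size $na+2b$ that is independent in the Laman matroid $\mathrm{M}_{\mathrm{Lam}}$ has complement satisfying conditions \eqref{eq:A1}--\eqref{eq:A3}, by translating each inequality into a Laman-type counting bound on $F$. Since the two matroids have the same rank and every independent set of $\mathcal{B}_{m,n}(a,b)$ is clearly independent in $\mathrm{M}_{\mathrm{Lam}}$, equality of the matroids follows. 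Your proof instead compares \emph{circuits}: you restrict a circuit $C$ of $\mathcal{B}_{m,n}(a,b)$ to its minimal rectangle, recast the claim as ``a hyperplane $H$ of $\mathrm{T}_{m,n}(s,r,0)$ with no full row and no full column is independent,'' and establish this by a closure argument on a maximal independent subset $H'$. The bookkeeping in the $s=2$ case (disjointness of the $A'_i$ forced by no full column, and $|A'_i|\le r-1$ forced by no full row) is correct, and I checked that it correctly handles both the case where the new element lands in a fresh column and the case where it collides with some $A'_p$. What your approach buys is a sharper structural statement about the cocircuits of $\mathrm{T}_{m,n}(2,r,0)$ (a hyperplane not containing any full row or column is automatically independent), at the cost of a somewhat longer reduction (passing to the minimal rectangle, disposing of $a=0$, treating $s=1$ separately); the paper's basis-counting argument is more compact. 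Your closing observation that Example~\ref{ex:nonlaman} produces a dependent hyperplane of $\mathrm{T}_{5,5}(3,3,p)$ with no full row or column, pinpointing exactly where the hyperplane argument fails for $m-a=3$, is a nice sanity check that the paper does not make explicit.
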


\begin{proof}
We focus on the case $m - a = 2$; the cases $m  = a$ and $m = a + 1$ are straightforward or can be deduced from the case $m - a = 2$. Let $\mathrm{M}_{\Lam}$ be the matroid on $[m] \times [n]$ whose circuits are the minimal sets which violate the Laman condition. Note that every basis of $\mathcal{B}_{m,n}(a,b)$ is independent in $\mathrm{M}_{\Lam}$. The Laman condition implies that the rank of $\mathrm{M}_{\Lam}$ is at most $na + mb - ab = 2b + na$, which is the rank of $\mathcal{B}_{m,n}(a,b)$, so the rank of $\mathrm{M}_{\Lam}$ is the same as the rank of $\mathcal{B}_{m,n}(a,b)$. It therefore suffices to show that if $F \subseteq [m] \times [n]$ is independent in $\mathrm{M}_{\Lam}$ with $|F| = 2b + na$, then the complement $F^c$ is independent in $\mathrm{T}_{m,n}(2, n-b, 0)$. Set $F^c = \cup_i \{i\} \times A_i$. 

We check that $F^c$ satisfies the inequalities in Proposition~\ref{lem:m-a=2} (with $r = n-b$). That $|A_i| \le n - b$ follows from the Laman condition applied to $([m] \setminus \{i\}) \times [n]$ and the fact that $|F| = 2b + na$. 

Condition \eqref{eq:A3} holds: it is equivalent to requiring that $|F| \ge 2b + na$. 

Condition \eqref{eq:A1} holds: suppose $i \in A_j \cap A_k \cap A_\ell$, i.e., $|F \cap [m] \times \{i\}| < a$. The Laman condition implies that  $|F \cap [m] \times ([n] \setminus i)| \le 2b + a(n-1)$. But this implies that $|F| < 2b + an$, a contradiction. 

Condition \eqref{eq:A2} holds: let $S_1$ be the set of elements in $[n]$ which occur in exactly $1$ of the $A_i$, let $S_2$ be the elements in $[n]$ which occur in exactly two of the $A_i$, and suppose $|A_i \cap S_1| + |S_2| > n-b$. 
Consider the complement of the $i$th row. The Laman condition implies that there can be at most $b$ columns $j$ where $F$ contains $([m] \setminus i) \times \{j\}$. The other $n - b$ columns have at least one entry missing from $([m] \setminus i) \times \{j\}$. We see that $|S_1| - |A_i \cap S_1|$ of these columns can arise from elements of $S_1$, and $|S_2|$ of these columns can arise from elements of $S_2$. Therefore
$$n - b \le (|S_1| - |A_i \cap S_1|) + |S_2|.$$
Adding this to the equation $|A_i \cap S_1| + |S_2| > n-b$, we see that $2(n - b) < |S_1| + 2|S_2|$. But $2(n - b) = |S_1| + 2|S_2|$ because $|F| = 2b + na$. 
\end{proof}

The circuits which are not Laman circuits that we will use to prove Corollary~\ref{cor:laman} will be built out of Example~\ref{ex:nonlaman} using the following result. 

\begin{proposition}\label{prop:cone}
Let $S = \{(1, 1), \dotsc, (1, n)\}$. For any $p \ge 0$ and $0 < s < m$, the contraction $\mathrm{T}_{m,n}(s,r,p)/S$ is $\mathrm{T}_{m-1, n}(s-1, r, p)$, and the deletion $\mathrm{T}_{m,n}(s,r,p) \setminus S$ is $\mathrm{T}_{m-1, n}(s, r, p)$. 
\end{proposition}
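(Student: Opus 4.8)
## Proof proposal

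The plan is to work directly with the linear-algebraic definition of $\mathrm{T}_{m,n}(s,r,p)$ in terms of tensors of generic vectors, and to realize both the contraction and the deletion by choosing the generic vectors $u_1,\dots,u_m \in \F^s$ in a mildly non-generic but still valid way. Recall that $S = \{1\} \times [n]$, so the $n$ tensors corresponding to $S$ are $u_1 \otimes v_1, \dots, u_1 \otimes v_n$, which span $\langle u_1 \rangle \otimes \F^r$, an $r$-dimensional subspace (here I use $|S| = n \ge r$). First I would observe that, since $u_1$ is a nonzero vector in $\F^s$, we may choose the remaining $u_2,\dots,u_m$ so that their images $u_2',\dots,u_m'$ in the quotient $\F^s/\langle u_1\rangle \cong \F^{s-1}$ are generic; this is legitimate because genericity of $u_1,\dots,u_m$ only requires genericity of the $u_i$ subject to $u_1$ being arbitrary nonzero, and a generic choice of the $u_i$ indeed has generic images mod $u_1$.

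For the \textbf{contraction} statement: a set $E \subseteq ([m]\setminus\{1\}) \times [n]$ is independent in $\mathrm{T}_{m,n}(s,r,p)/S$ if and only if $\{u_i \otimes v_j : (i,j) \in E\} \cup \{u_1 \otimes v_k : k \in [n]\}$ is linearly independent in $\F^s \otimes \F^r$, i.e., if and only if the images of $\{u_i \otimes v_j : (i,j)\in E\}$ in $(\F^s \otimes \F^r)/(\langle u_1\rangle \otimes \F^r) \cong (\F^s/\langle u_1\rangle) \otimes \F^r$ are linearly independent. But those images are exactly $u_i' \otimes v_j$, and since the $u_i'$ ($i \ge 2$) are generic in $\F^{s-1}$ and the $v_j$ are generic in $\F^r$, this is precisely independence of $E$ in $\mathrm{T}_{m-1,n}(s-1,r,p)$ (reindexing $[m]\setminus\{1\}$ as $[m-1]$). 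One subtle point I would check carefully: that the direct sum decomposition $(\F^s \otimes \F^r)/(\langle u_1\rangle \otimes \F^r) \cong (\F^s/\langle u_1\rangle)\otimes \F^r$ is canonical, and that $u_1 \otimes V_{[n]} = \langle u_1\rangle \otimes \F^r$ genuinely has dimension $r$ so that we are contracting an independent set of the right size.

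For the \textbf{deletion} statement: $\mathrm{T}_{m,n}(s,r,p) \setminus S$ is simply the matroid represented by the tensors $\{u_i \otimes v_j : i \ge 2, j \in [n]\}$ in $\F^s \otimes \F^r$. Since $u_2,\dots,u_m$ are themselves generic vectors in $\F^s$ (genericity of $u_1,\dots,u_m$ implies genericity of any subset), this vector configuration represents exactly $\mathrm{T}_{m-1,n}(s,r,p)$ after reindexing; here one only needs $m - 1 \ge s$, which holds because $s < m$. This direction is essentially immediate from the definition.

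I expect the main obstacle — though it is more a matter of bookkeeping than genuine difficulty — to be justifying that the non-generic choice of $u_1$ (with $u_2,\dots,u_m$ generic mod $u_1$) computes the same matroid as a fully generic choice. The clean way to see this is: the matroid $\mathrm{T}_{m,n}(s,r,p)$ is, by definition, the matroid obtained from \emph{any} sufficiently generic choice, and the set of $(u_1,\dots,u_m)$ for which $u_2',\dots,u_m'$ fail to be generic in $\F^s/\langle u_1\rangle$ is a proper closed subvariety of the parameter space, so a generic point avoids it; hence a generic choice already has the desired form, and we lose nothing by assuming it. With that in hand, both the contraction and deletion identifications fall out of the linear algebra of quotienting by $\langle u_1 \rangle \otimes \F^r$ as sketched above. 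The cases $s=1$ and $s = m$ are excluded by the hypothesis $0 < s < m$, which is exactly what we need for $s - 1 \ge 0$ in the contraction and $m-1 \ge s$ in the deletion.
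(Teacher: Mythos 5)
Your proof takes essentially the same approach as the paper: contract $S$ by quotienting out $\langle u_1\rangle\otimes\F^r=(\F^s\otimes\F^r)/(\F^s/\langle u_1\rangle)\otimes\F^r$ (so the quotient configuration gives $\mathrm{T}_{m-1,n}(s-1,r,p)$), and delete $S$ by simply dropping $u_1$. One small imprecision to fix: the sentence asserting that $E$ is independent in $\mathrm{T}_{m,n}(s,r,p)/S$ \emph{iff} $\{u_i\otimes v_j:(i,j)\in E\}\cup\{u_1\otimes v_k:k\in[n]\}$ is linearly independent is only correct when $n=r$; for $n>r$ the set $\{u_1\otimes v_k\}$ is already dependent, so you should replace it by a basis of $S$ (equivalently, state directly, as you do in the ``i.e.''\ clause, that $E$ is independent in the contraction iff the images of the corresponding vectors in the quotient by $\operatorname{span}(S)=\langle u_1\rangle\otimes\F^r$ are linearly independent). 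The rest of the argument, including the genericity of the images $u_2',\dots,u_m'$, is correct and matches the paper.
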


\begin{proof}
Let $U, V$ be vector spaces over an infinite field $\mathbf{k}$ of characteristic $p$ of dimensions $s, r$ respectively. Choose generic vectors $u_1, \dotsc, u_m \in U$ and $v_1, \dotsc, v_n \in V$. 
The contraction $\mathrm{T}_{m,n}(s,r,p)/S$ is represented by the vector configuration $u_2 \otimes v_1, u_2 \otimes v_2, \dotsc, u_m \otimes v_n \in U/(\mathbf{k} \cdot u_1) \otimes V$, and the deletion $\mathrm{T}_{m,n}(s,r,p) \setminus S$ is represented by $u_2 \otimes v_1, u_2 \otimes v_2, \dotsc, u_m \otimes v_n \in U \otimes V$. 
\end{proof}

Using Proposition~\ref{prop:cone} and Theorem~\ref{thm:equivalence}(\ref{bipartitecase}), we obtain a simple proof of the ``cone lemma'' \cite{kalai2016bipartite}*{Lemma 3.12} for bipartite rigidity. 

\begin{corollary}\label{cor:cone}
Let $G$ be a bipartite graph on $[m] \times [n]$. Let $C_LG$ (respectively $C_RG$) be the bipartite graph on $[m + 1] \times [n]$ (resp. $[m] \times [n+1]$) obtained by adding a vertex to $G$ on the left (resp. right) and then connecting it to everything in $[n]$ (resp. $[m]$). Then $G$ is independent in $\mathcal{B}_{m,n}(a,b)$ if and only if $C_LG$ is independent in $\mathcal{B}_{m+1,n}(a+1, b)$ (resp. $C_RG$ is independent in $\mathcal{B}_{m, n+1}(a, b+1)$). 
\end{corollary}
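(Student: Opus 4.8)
The plan is to deduce Corollary~\ref{cor:cone} directly from Proposition~\ref{prop:cone} and Theorem~\ref{thm:equivalence}(\ref{bipartitecase}) by dualizing. First I would recall the standard matroid fact that duality interchanges deletion and contraction: for a matroid $\mathrm{M}$ on ground set $X$ and $S \subseteq X$, one has $(\mathrm{M}/S)^* = \mathrm{M}^* \setminus S$ and $(\mathrm{M} \setminus S)^* = \mathrm{M}^*/S$. Also recall that $G$ is independent in a matroid $\mathrm{N}$ if and only if the complement $G^c$ is spanning in $\mathrm{N}^*$, and that a set $F$ is spanning in $\mathrm{N}^*$ iff $F$ contains a basis of $\mathrm{N}^*$ iff $X \setminus F$ is independent in $\mathrm{N}$ — but more useful here: $G$ independent in $\mathrm{N}$ $\iff$ $G$ disjoint from some cobasis, which is cleaner to phrase as: $G$ is independent in the deletion/contraction translating correctly under complementation.

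The key steps, in order: (1) Identify $C_L G$ combinatorially. With $S = \{m+1\} \times [n] \subseteq [m+1] \times [n]$ the set of edges of the new left vertex, $C_L G$ is the disjoint union $G \sqcup S$ inside $[m+1] \times [n]$. (2) By Theorem~\ref{thm:equivalence}(\ref{bipartitecase}), $\mathcal{B}_{m+1,n}(a+1,b)$ is the dual of $\mathrm{T}_{m+1,n}(m-a, n-b, 0)$ — note the parameters: $(m+1) - (a+1) = m - a$ and $n - b$ unchanged. (3) $C_L G$ is independent in $\mathcal{B}_{m+1,n}(a+1,b)$ iff its complement $(C_L G)^c = ([m+1]\times[n]) \setminus (G \sqcup S)$ is spanning in $\mathrm{T}_{m+1,n}(m-a,n-b,0)$; since $S$ is entirely removed, $(C_L G)^c = G^c$, the complement of $G$ taken inside $[m]\times[n]$, viewed as a subset of the ground set of $\mathrm{T}_{m+1,n}(m-a,n-b,0) \setminus S$. (4) Apply Proposition~\ref{prop:cone}: $\mathrm{T}_{m+1,n}(m-a,n-b,0) \setminus S = \mathrm{T}_{m,n}(m-a, n-b, 0)$ (here $s = m-a$, and we need $0 < m-a < m+1$, i.e. $a < m$, which holds since otherwise $C_L G$ being independent is trivial/degenerate and the statement is vacuous or easily checked). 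Then $G^c$ spanning in $\mathrm{T}_{m,n}(m-a,n-b,0)$ iff $G$ is independent in its dual $\mathcal{B}_{m,n}(a,b)$, again by Theorem~\ref{thm:equivalence}(\ref{bipartitecase}). For the right-coning statement, repeat with $S = [m] \times \{n+1\}$, the contraction/deletion roles identical since $\mathrm{T}_{m,n}(s,r,p)$ is symmetric in the two factors, and use $\mathrm{T}_{m,n+1}(m-a, n+1-b, 0) \setminus S = \mathrm{T}_{m,n}(m-a, n-b, 0)$.

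The main subtlety — not really an obstacle — is bookkeeping the parameter shifts and checking that the "spanning in the dual" reformulation of independence correctly commutes with deletion: precisely, $G \sqcup S$ independent in $\mathrm{N}^*$ (on ground set $X \sqcup S$) iff $(X \sqcup S) \setminus (G \sqcup S) = X \setminus G$ is spanning in $\mathrm{N}$, and spanning in $\mathrm{N} = \mathrm{N}' \setminus S$... wait, careful: I want deletion on the $\mathrm{T}$ side, so I should track that $\mathcal{B}_{m+1,n}(a+1,b)^* = \mathrm{T}_{m+1,n}$ and the deletion $\mathcal{B}_{m+1,n}(a+1,b) \setminus S$ corresponds under duality to the contraction $\mathrm{T}_{m+1,n}/S$. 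The cleanest route is therefore: $C_L G = G \sqcup S$ independent in $\mathcal{B}_{m+1,n}(a+1,b)$ $\iff$ $G$ independent in $\mathcal{B}_{m+1,n}(a+1,b)/S$ (since $S$ is itself independent — the $n$ edges at a single vertex are independent in any bipartite rigidity matroid with $a+1 \ge 1$, as they span at most $n$ and in fact the relevant rank count works out — this needs a one-line check that $S$ is independent in $\mathcal{B}_{m+1,n}(a+1,b)$). Then $\mathcal{B}_{m+1,n}(a+1,b)/S = (\mathrm{T}_{m+1,n}(m-a,n-b,0))^* / S = (\mathrm{T}_{m+1,n}(m-a,n-b,0) \setminus S)^* = \mathrm{T}_{m,n}(m-a,n-b,0)^* = \mathcal{B}_{m,n}(a,b)$ by Proposition~\ref{prop:cone} and Theorem~\ref{thm:equivalence}. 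I expect the genuinely delicate point to be verifying that $S$ is independent in $\mathcal{B}_{m+1,n}(a+1,b)$ so that "independent after adding a cone point" equals "independent in the contraction"; this follows because the $n$ columns at the new left vertex impose independent constraints — concretely, in the vector model $u_{m+1} \otimes v_1, \dots, u_{m+1}\otimes v_n$ are linearly independent in $U \otimes V$ when $\dim V = n - b$... hmm, that is only $n-b$ vectors' worth of space, so actually $S$ is NOT independent in $\mathrm{T}$; but we need $G \sqcup S$'s complement, and I should instead argue on the $\mathrm{B}$ side directly that $S$ (the $n$ edges at one vertex of the left part) is independent in $\mathcal{B}_{m+1,n}(a+1,b)$, which holds because restricting to that single left vertex and all $n$ right vertices gives $\mathcal{B}_{1,n}(1,b)$ of rank $n + b \cdot 1 - b = n$, so all $n$ edges are independent. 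That settles it, and the rest is the formal duality manipulation above.
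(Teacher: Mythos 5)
Your proposal is correct and is exactly the argument the paper intends: the paper states Corollary~\ref{cor:cone} without a detailed derivation, only citing Proposition~\ref{prop:cone} and Theorem~\ref{thm:equivalence}(\ref{bipartitecase}), and your chain $\mathcal{B}_{m+1,n}(a+1,b)/S=(\mathrm{T}_{m+1,n}(m-a,n-b,0)\setminus S)^{*}=\mathrm{T}_{m,n}(m-a,n-b,0)^{*}=\mathcal{B}_{m,n}(a,b)$, together with the check that $S=\{m+1\}\times[n]$ is itself independent so that ``$G\sqcup S$ independent'' equals ``$G$ independent in the contraction by $S$,'' is the missing reasoning; your self-correction away from the naive ``complement spanning after deletion'' move is the essential point, since that move secretly needs $S$ to be coindependent anyway. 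One small parameter slip in the right-cone step: the dual of $\mathcal{B}_{m,n+1}(a,b+1)$ is $\mathrm{T}_{m,n+1}(m-a,\,n-b,\,0)$ since $(n+1)-(b+1)=n-b$, not $\mathrm{T}_{m,n+1}(m-a,\,n+1-b,\,0)$, though the matroid obtained after deleting $S$ is the same.
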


\begin{proof}[Proof of Corollary~\ref{cor:laman}]
Suppose $2 \le a < m-2$ and $2 \le b < n-2$. Let $G$ be the graph described in Example~\ref{ex:nonlaman}, which is dependent in $\mathcal{B}_{5,5}(2,2)$ but does not contain a Laman circuit. If we perform $a-2$ left cones and $b - 2$ right cones, then we obtain a graph $\mathcal{H}$ which is dependent in $\mathcal{B}_{a+3, b+3}(a,b)$ but does not contain a Laman circuit. The restriction of $\mathcal{B}_{m,n}(a,b)$ to $[a + 3] \times [b+3]$ is $\mathcal{B}_{a+3, b+3}(a,b)$, e.g. by Proposition~\ref{prop:cone}, so $\mathcal{H}$ also gives a dependent set which does not contain a Laman circuit for $\mathcal{B}_{m,n}(a,b)$.

If $a \le 1$ or $b \le 1$, then the result follows from \cite[Theorem 5.4]{kalai2016bipartite}, which is based on \cite{whiteley1989matroid}.
If $m - a \le 2$ or $n - b \le 2$, then the result follows from Proposition~\ref{prop:Lamanbipartite}.
\end{proof}

\subsection{Challenges for $s=4$}
Consider $m = n = 7$ and $s = r = 4$. Consider the pattern
\[
    E = (\{1,2,3\}^2 \setminus \{(3,3)\}) \cup \{4,5\}^2 \cup \{6,7\}^2.
\]
This is illustrated in Figure~\ref{fig:7733}. Although $E$ is of basis size and its complement satisfies the Laman conditions, it is not a basis of the matroid $\mathrm{T}_{7,7}(4,4,p)$ for any $p$. 

\begin{figure}[ht]
\begin{center}
\begin{tikzpicture}
\newcommand{\cbb}{\Huge $\star$}
\newcommand{\cbr}{\Huge $\diamond$}

\fill[red!50!white] (0,7) rectangle (3,5);
\fill[red!50!white] (0,7) rectangle (2,4);
\fill[red!50!white] (3,4) rectangle (5,2);
\fill[red!50!white] (5,2) rectangle (7,0);

\fill[blue!40!white] (3,7) rectangle (7,4);
\fill[blue!40!white] (0,4) rectangle (3,0);
\fill[blue!40!white] (3,2) rectangle (5,0);
\fill[blue!40!white] (5,4) rectangle (7,2);

\draw (0.5,6.5) node {\cbr};
\draw (0.5,5.5) node {\cbr};
\draw (0.5,4.5) node {\cbr};
\draw (1.5,6.5) node {\cbr};
\draw (1.5,5.5) node {\cbr};
\draw (1.5,4.5) node {\cbr};
\draw (2.5,6.5) node {\cbr};
\draw (2.5,5.5) node {\cbr};

\draw (4.5,2.5) node {\cbr};
\draw (4.5,3.5) node {\cbr};
\draw (3.5,2.5) node {\cbr};
\draw (3.5,3.5) node {\cbr};

\draw (6.5,0.5) node {\cbr};
\draw (5.5,0.5) node {\cbr};
\draw (6.5,1.5) node {\cbr};
\draw (5.5,1.5) node {\cbr};

\draw (4.5,0.5) node {\cbb};
\draw (3.5,0.5) node {\cbb};
\draw (4.5,1.5) node {\cbb};
\draw (3.5,1.5) node {\cbb};

\draw (6.5,2.5) node {\cbb};
\draw (5.5,3.5) node {\cbb};
\draw (5.5,2.5) node {\cbb};
\draw (6.5,3.5) node {\cbb};

\draw (0.5,0.5) node {\cbb};
\draw (0.5,1.5) node {\cbb};
\draw (0.5,2.5) node {\cbb};
\draw (0.5,3.5) node {\cbb};
\draw (1.5,0.5) node {\cbb};
\draw (1.5,1.5) node {\cbb};
\draw (1.5,2.5) node {\cbb};
\draw (1.5,3.5) node {\cbb};
\draw (2.5,0.5) node {\cbb};
\draw (2.5,1.5) node {\cbb};
\draw (2.5,2.5) node {\cbb};
\draw (2.5,3.5) node {\cbb};

\draw (3.5,6.5) node {\cbb};
\draw (4.5,6.5) node {\cbb};
\draw (5.5,6.5) node {\cbb};
\draw (6.5,6.5) node {\cbb};
\draw (3.5,5.5) node {\cbb};
\draw (4.5,5.5) node {\cbb};
\draw (5.5,5.5) node {\cbb};
\draw (6.5,5.5) node {\cbb};
\draw (3.5,4.5) node {\cbb};
\draw (4.5,4.5) node {\cbb};
\draw (5.5,4.5) node {\cbb};
\draw (6.5,4.5) node {\cbb};

\draw (0,0) grid (7,7);
\end{tikzpicture}
\end{center}
\caption{(red $\diamond$) A circuit of $\mathrm{T}_{7,7}(4,4,p)$. (blue $\star$) The corresponding circuit in $\mathcal{B}_{7,7}(3,3)$. See Figure~\ref{fig:nonlaman} for how to interpret.}\label{fig:7733}
\end{figure}

\begin{proposition}\label{prop:7733}
    For all $p$, $E$ is not a basis of $\mathrm{T}_{7,7}(4,4,p)$.
\end{proposition}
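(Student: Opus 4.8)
The plan is to show directly that the $16$ tensors $\{u_i \otimes v_j : (i,j) \in E\}$ are linearly dependent; this suffices since $|E| = 16 = 4\cdot 4$ equals the rank of $\mathrm{T}_{7,7}(4,4,p)$, so a set of this size is a basis exactly when it is independent. Here $u_1,\dots,u_7,v_1,\dots,v_7$ are generic vectors in $\mathbf{k}^4$ over an infinite field of characteristic $p$, and for generic vectors $\langle u_1,u_2,u_3\rangle$ is $3$-dimensional while $\langle u_4,u_5\rangle$, $\langle u_6,u_7\rangle$, $\langle v_1,v_2,v_3\rangle$, $\langle v_4,v_5\rangle$, $\langle v_6,v_7\rangle$ all have the evident dimension. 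This is the only property of the $u_i,v_j$ that the argument uses, and it holds in every characteristic.

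First I would exploit the block-diagonal structure of $E$ visible in Figure~\ref{fig:7733}. Write $E = E_1 \sqcup E_2 \sqcup E_3$ with $E_1 = \{1,2,3\}^2 \setminus \{(3,3)\}$, $E_2 = \{4,5\}^2$, $E_3 = \{6,7\}^2$, and set $W_1 = \langle u_1,u_2,u_3\rangle \otimes \langle v_1,v_2,v_3\rangle$, $W_2 = \langle u_4,u_5\rangle \otimes \langle v_4,v_5\rangle$, and $W_3 = \langle u_6,u_7\rangle \otimes \langle v_6,v_7\rangle$. Since $E_k$ involves only the rows and columns indexing $W_k$, we get $\operatorname{span}\{u_i \otimes v_j : (i,j) \in E_k\} \subseteq W_k$ for each $k$, and hence all $16$ tensors lie in $W_1 + W_2 + W_3$, where $\dim W_1 = 9$ and $\dim W_2 = \dim W_3 = 4$.

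The crux is the bound $\dim(W_1 + W_2 + W_3) \le 15 < 16$. From the identity $\dim(W_1 + W_2 + W_3) = \dim W_1 + \dim W_2 + \dim W_3 - \dim(W_1 \cap W_2) - \dim\bigl((W_1 + W_2)\cap W_3\bigr)$, it is enough to show $\dim(W_1 \cap W_2) \ge 1$ and $\dim\bigl((W_1 + W_2)\cap W_3\bigr) \ge \dim(W_1 \cap W_3) \ge 1$. Both reduce to two characteristic-free observations: for subspaces one has $(A \cap A') \otimes (B \cap B') \subseteq (A \otimes B) \cap (A' \otimes B')$, and a $3$-dimensional and a $2$-dimensional subspace of $\mathbf{k}^4$ meet in dimension at least $3 + 2 - 4 = 1$. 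Applying these to the row- and column-subspaces of the pairs $(E_1,E_2)$ and $(E_1,E_3)$ yields both bounds, so the $16$ tensors span a space of dimension at most $15$ and are dependent; thus $E$ is not a basis of $\mathrm{T}_{7,7}(4,4,p)$ for any $p$.

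I do not expect a genuine obstacle here: once the block decomposition is in place, the argument is pure linear algebra valid over an arbitrary field, and genericity enters only to guarantee that the listed tuples of vectors are linearly independent. The only points requiring a little care are verifying the three-subspace dimension identity and phrasing the intersection estimates so that they rest on nothing beyond the trivial inequality $\dim(A \cap B) \ge \dim A + \dim B - \dim(\text{ambient space})$ — in particular, we never need any intersection to attain its generic dimension, only to be nonzero, which a dimension count forces.
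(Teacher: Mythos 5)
Your proof is correct and takes essentially the same approach as the paper: both rest on the observations that all $16$ tensors lie in $W_0 + W_2 + W_3$ (where $W_0 = \langle u_1,u_2,u_3\rangle \otimes \langle v_1,v_2,v_3\rangle$) and that $W_0 \cap W_2$ and $W_0 \cap W_3$ are nonzero via the tensor product of the forced intersections of row- and column-subspaces in $\mathbf{k}^4$. The only difference is presentational: the paper phrases the dimension count as a contradiction (an $8+1+1=10$-dimensional subspace sitting inside the $9$-dimensional $W_0$), while you directly bound $\dim(W_0 + W_2 + W_3) \le 15$ by inclusion–exclusion.
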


\begin{proof}
    Let $u_1, \hdots, u_7, v_1, \hdots, v_7 \in \F^4$ be generic. Further define
    \begin{align*}
        W_0 &= \Span\{u_i \otimes v_j : (i,j) \in \{1,2,3\}^2\}\\
        W_1 &= \Span\{u_i \otimes v_j : (i,j) \in \{1,2,3\}^2 \setminus \{(3,3)\}\}\\
        W_2 &= \Span\{u_i \otimes v_j : (i,j) \in \{4,5\}^2\}\\
        W_3 &= \Span\{u_i \otimes v_j : (i,j) \in \{6,7\}^2\}.
    \end{align*}
    Assume for the sake of contradiction that $\dim(W_1 + W_2 + W_3) = 16.$ Note that there exists $w_2 \in W_0 \cap W_2$ and $w_3 \in W_0 \cap W_3$. Since $W_1, W_2$, and $W_3$ are linearly independent, then $W_1, w_2, w_3$ are linearly independent. This implies that $\dim(W_1 + \Span\{w_2\} + \Span\{w_3\}) = 10$, which contradicts the fact that $W_1 + \Span\{w_2\} + \Span\{w_3\} \subseteq W_0$.
\end{proof}

Note that the example of Proposition~\ref{prop:7733} is an obstruction to extending the proof of Proposition~\ref{lem:m-a=3} to $s=4$. In the proof of Proposition~\ref{lem:m-a=3}, once we reduce to the $S_3 = \emptyset$ case, we show (through six cases) that a set $E\subseteq [m]\times [n]$ is dependent if and only if there is a tensor product $R= L\otimes \F^r$ for some  $L\subseteq \F^s$ and a partition $E_1,E_2,\hdots,E_\ell$ of $E$ such that $\sum\limits_{i=1}^\ell \dim ((\Span E_i) \cap R) > r \dim L$. This gives a family of inequalities that are satisfied by independent sets, and by this method we obtain all the inequalities in Proposition~\ref{lem:m-a=3}. To prove the sufficiency of these inequalities, we use an inductive argument: when one such inequality is tight then we can quotient by the tensor subspace $L\otimes \F^r$ and work over the smaller space $(\F^s / L)\otimes \F^r$. In the example of Proposition~\ref{prop:7733}, the dependence is detected by looking at a subspace $L \otimes L$ in $\F^4\otimes \F^4$, where $\dim L = 3$. As $(\F^4\otimes \F^4) / (L \otimes L)$ is not naturally the tensor product of two spaces, the simple inductive argument of Proposition~\ref{lem:m-a=3} will not work. 
\section{Characteristic independence}

In this section, we prove Theorem~\ref{thm:char}. The cases $s=0$ and $m = s$ are trivial. The descriptions of $\mathrm{T}_{m,n}(s,r,p)$ when $s \le 3$ (Section~\ref{sec:m-asmall}) or when $m - s  = 1$ (\cite{gopalan2017maximally}) are independent of the characteristic, so it remains to do the case when $m - s = n - r = 2$. For this we use Bernstein's description of the independent sets of $\mathcal{B}_{m,n}(2,2)$, Theorem~\ref{thm:Bernstein},  which gives a description of $\mathrm{T}_{m,n}(m-2,n-2,0)$. The following result shows that the dependent sets of $\mathrm{T}_{m,n}(m-2,n-2,0)$ are also dependent in $\mathrm{T}_{m,n}(m-2, n-2, p)$, so we only need to show that the bases of $\mathrm{T}_{m,n}(m-2, n-2, 0)$ are bases of $\mathrm{T}_{m,n}(m-2, n-2, p)$. 

\begin{proposition}\label{prop:uppersemi}
Any independent set of $\mathrm{T}_{m,n}(s,r, p)$ is an independent set of $\mathrm{T}_{m,n}(s,r,0)$. 
\end{proposition}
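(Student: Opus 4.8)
The statement is a semicontinuity (or "upper semicontinuity of rank") assertion: the collection of tensor products $\{u_i \otimes v_j : (i,j) \in E\}$ being linearly independent is an open condition on the choice of the $u_i$ and $v_j$, so it can only become "more true" under specialization — but we are comparing a positive-characteristic field with characteristic $0$, so the argument must route through a genericity statement that holds over an arbitrary base. The plan is to work directly with determinants. Fix $E \subseteq [m] \times [n]$ with $|E| = e$, and suppose $E$ is independent in $\mathrm{T}_{m,n}(s,r,p)$. Independence means: for generic $u_i \in \mathbf{k}^s$, $v_j \in \mathbf{k}^r$ (with $\mathbf{k}$ an infinite field of characteristic $p$), the $e$ vectors $u_i \otimes v_j \in \mathbf{k}^{sr}$ are linearly independent, i.e. some $e \times e$ minor of the $sr \times e$ matrix whose columns are the $u_i \otimes v_j$ is nonzero.

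The key observation is that each such $e \times e$ minor is a fixed polynomial $P$ — with \emph{integer} coefficients — in the coordinates of the $u_i$ and $v_j$, independent of the field. (Concretely, one can take the $u_i$ and $v_j$ to have indeterminate entries $x_{i\ell}$, $y_{jk}$; then $P \in \mathbb{Z}[x_{i\ell}, y_{jk}]$.) Independence of $E$ in $\mathrm{T}_{m,n}(s,r,p)$ says precisely that at least one of these finitely many polynomials $P$ does not vanish identically when its coefficients are reduced modulo $p$ (interpreting $\mathbb{Z} \to \mathbf{k}$), since an infinite field is Zariski-dense in affine space and a nonzero polynomial over an infinite field has a non-root. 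But if $P \bmod p$ is not the zero polynomial in $\mathbf{k}[x,y]$, then $P$ itself is not the zero polynomial in $\mathbb{Z}[x,y]$, hence not the zero polynomial in $\mathbb{Q}[x,y]$ — its reduction modulo a prime cannot be nonzero unless $P$ had a nonzero coefficient to begin with. Therefore the same minor $P$ is not identically zero over $\mathbb{C}$, so for generic complex $u_i, v_j$ it is nonzero, which says exactly that $E$ is independent in $\mathrm{T}_{m,n}(s,r,0)$.

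So the steps, in order, are: (1) express independence of $E$ as the non-vanishing of some polynomial $P$ in the generic-entry matrix, with $P \in \mathbb{Z}[x_{i\ell}, y_{jk}]$ the relevant $|E| \times |E|$ minor; (2) observe that $E$ independent in characteristic $p$ forces $P \bmod p \neq 0$ in $\mathbf{k}[x,y]$ (using that $\mathbf{k}$ is infinite, so a generic point detects any nonzero polynomial); (3) lift: $P \bmod p \neq 0 \implies P \neq 0$ in $\mathbb{Z}[x,y]$ (a polynomial with all coefficients divisible by $p$ would reduce to $0$); (4) reduce modulo the map $\mathbb{Z} \to \mathbb{C}$: $P \neq 0$ in $\mathbb{Z}[x,y]$ implies $P \neq 0$ in $\mathbb{C}[x,y]$, since $\mathbb{Z} \hookrightarrow \mathbb{C}$ is injective; (5) conclude that for generic complex $u_i, v_j$ the minor $P$ is nonzero, hence $E$ is independent in $\mathrm{T}_{m,n}(s,r,0)$.

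I do not expect a serious obstacle here — the proposition is genuinely elementary, and the only thing to be careful about is phrasing the genericity correctly: one must use that the matroid $\mathrm{T}_{m,n}(s,r,p)$ is realized by \emph{any} sufficiently generic choice of vectors over an infinite field (as asserted in the introduction), so that "independent in $\mathrm{T}_{m,n}(s,r,p)$" is equivalent to "the universal minor polynomial $P$ has nonzero image in $\mathbf{k}[x,y]$," rather than merely "some specific specialized choice works." Once that equivalence is in hand, the chain $\mathbf{k}[x,y] \neq 0 \Rightarrow \mathbb{Z}[x,y] \neq 0 \Rightarrow \mathbb{C}[x,y] \neq 0$ is immediate. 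One could also phrase the whole thing matroid-theoretically: the matrix with indeterminate entries over $\mathbb{Z}[x,y]$ has a well-defined "rank of column submatrices," and reduction modulo any prime can only decrease the rank of a given submatrix (a nonzero minor over $\mathbb{Q}$ may vanish mod $p$, but not conversely) — but the determinant-lifting argument above is the cleanest way to see it, so that is the route I would write up.
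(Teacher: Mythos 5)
Your proposal is correct, and it hits the same core observation the paper uses: independence is detected by the non-vanishing of a minor with integer coefficients, and a polynomial in $\mathbb{Z}[x,y]$ that has nonzero reduction modulo $p$ is already nonzero over $\mathbb{Z}$, hence over $\mathbb{C}$. The paper's proof (two sentences) routes through the matrix produced in the proof of Theorem~\ref{thm:equivalence} — i.e.\ the matrix representing the dual matroid $\mathcal{B}_{m,n}(a,b)$ — and invokes the fact that its rank can only drop under reduction mod $p$, which implicitly requires knowing that the duality of Theorem~\ref{thm:equivalence} is valid over fields of positive characteristic as well. Your version is slightly more direct: you work with the $sr \times |E|$ matrix of tensor products $u_i \otimes v_j$ itself (whose entries are monomials $x_{i\ell}y_{jk}$, already defined over $\mathbb{Z}$), bypassing the duality entirely. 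Both arguments are sound; yours has the minor advantage of being self-contained and not depending on Theorem~\ref{thm:equivalence} or on its behavior in characteristic $p$.
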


\begin{proof}
The proof of Theorem~\ref{thm:equivalence} shows that one can check if a set is independent in $\mathrm{T}_{m,n}(s,r,0)$ in terms of the rank of a matrix whose entries are polynomials with integer coefficients in $\{x_{ij}, y_{k\ell}\}$. We can check if a set is independent in $\mathrm{T}_{m,n}(s,r,p)$ by taking the same matrix and computing the rank over a field of characteristic $p$. 
\end{proof}

We now show that the independent sets of $\mathcal{B}_{m,n}(2,2)$, as described in Theorem~\ref{thm:Bernstein}, are still independent in the dual of $\mathrm{T}_{m,n}(m-2,n-2,p)$ for any $p$. 

\begin{proposition}\label{prop:char-free-Bernstein}
    If a bipartite graph has an edge orientation with no directed cycles or alternating cycles, then the graph is independent in the dual of $\mathrm{T}_{m,n}(m-2,n-2,p)$ for any $p$.
\end{proposition}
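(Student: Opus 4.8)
The plan is to prove the statement directly by constructing, from an oriented bipartite graph $G$ with no directed or alternating cycle, an explicit choice of generic-looking vectors realizing independence in the dual of $\mathrm{T}_{m,n}(m-2,n-2,p)$ over a field of characteristic $p$. By the dual description worked out in the proof of Theorem~\ref{thm:equivalence}(\ref{bipartitecase}), the dual of $\mathrm{T}_{m,n}(m-2,n-2,p)$ is the column matroid of the matrix $M$ of Definition~\ref{def:bipartiterigidity} with $a=b=2$, whose rows are indexed by $[2]\times[n]$ and $[2]\times[m]$ and whose columns are indexed by $[m]\times[n]$; the column $(i,j)$ has the two-entry block $(x_{i1},x_{i2})$ in the rows $([2],j)$ and the two-entry block $(y_{j1},y_{j2})$ in the rows $([2],i)$. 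So I must show that, for a suitable specialization of the variables $x_{ik}, y_{j\ell}$ in $\overline{\mathbb{F}_p}$ (or any infinite field of characteristic $p$), the columns of $M$ indexed by $E(G)$ are linearly independent. Equivalently, I want to exhibit, for each edge $(i,j)$, a pair of vectors $x_i = (x_{i1},x_{i2})\in\mathbf{k}^2$ (one per left vertex) and $y_j=(y_{j1},y_{j2})\in\mathbf{k}^2$ (one per right vertex) such that the resulting columns are independent.

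The key idea, as signalled in the paper ("Unlike Bernstein's original proof, our argument establishes the stronger statement... for any $p$"), is to use the acyclic orientation to triangularize. First, I would reformulate linear independence of the columns $\{(x_i, y_j)\text{-block vector} : (i,j)\in E\}$ as the statement that the only solution to $\sum_{(i,j)\in E}\lambda_{ij}(\text{column }(i,j)) = 0$ is $\lambda\equiv 0$. Reading off the rows indexed by $([2],j)$ for a fixed right vertex $j$ gives, for each right vertex $j$, the relation $\sum_{i:(i,j)\in E}\lambda_{ij}x_i = 0$ in $\mathbf{k}^2$; reading off the rows indexed by $([2],i)$ for a fixed left vertex $i$ gives $\sum_{j:(i,j)\in E}\lambda_{ij}y_j = 0$ in $\mathbf{k}^2$. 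So the task becomes: choose $x_i$'s and $y_j$'s so that these $2(m+n)$ scalar equations force all $\lambda_{ij}=0$. Now I would use the orientation: process the edges in a linear order refining the acyclic orientation, and try to choose $x_i, y_j$ so that at each step the "new" variable can be solved for in terms of earlier ones, forcing it to be $0$. Concretely, I expect to set, for each left vertex $i$, $x_i = (1, t_i)$ and for each right vertex $j$, $y_j = (1, s_j)$ for parameters $t_i, s_j$ to be chosen, and then translate the no-directed-cycle and no-alternating-cycle conditions into a statement that a certain combinatorially-defined matrix (a signed incidence-type matrix built from the $t_i - s_j$ differences along edges) is nonsingular — and crucially, that its determinant is a monomial-like or sign-definite expression, hence nonzero in every characteristic.

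The main obstacle I anticipate is pinning down exactly which specialization makes the triangularization go through, and verifying that the resulting determinant is a nonzero integer (or a product of nonzero differences $t_i - s_j$) independent of $p$, rather than something that could vanish modulo some prime. The no-alternating-cycle condition should be precisely what rules out cancellation: an alternating cycle is exactly the combinatorial pattern that would make a subdeterminant of the relevant matrix vanish identically, and the no-directed-cycle condition is what allows the inductive/triangular elimination to proceed without circular dependencies. I would handle this by induction on $|E(G)|$: take a "sink-like" edge $(i_0,j_0)$ in the orientation — one at which we can choose the new parameter freely — delete it, apply the inductive hypothesis to get independence of the remaining columns, and then argue that the deleted column is not in their span by a direct rank computation using the chosen values; the alternating-cycle hypothesis is what guarantees the needed nondegeneracy when we re-add the edge. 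The bookkeeping of which vertex's parameter is "new" at each deletion, and checking both the directed-cycle and alternating-cycle conditions are genuinely used (and sufficient), is where the real work lies.
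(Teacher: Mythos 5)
Your overarching intuition matches the paper's: the goal is to show that a maximal minor of the bipartite rigidity matrix restricted to $E(G)$ is nonzero in every characteristic, with acyclicity and no-alternating-cycle encoding exactly the combinatorics that prevent cancellation. But the concrete route you propose is different from the paper's, and it has a real gap rather than a technical loose end. The paper does \emph{not} specialize the variables: it keeps the full $2|V(G)|$ variables $\{x_{vc}\}$ and shows that one explicitly constructed monomial occurs in a maximal minor with coefficient exactly $\pm 1$, which immediately forces nonvanishing over every field. Each term in the Leibniz expansion of the minor corresponds to an injective map $\sigma\colon E(G)\to\{x_{vc}\}$, i.e., a $2$-colored orientation $G_\sigma$ of $G$, and the term is the monomial $\prod_{v,c} x_{vc}^{\operatorname{indeg}_{c,G_\sigma}(v)}$. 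From the given acyclic orientation one builds a specific $G_\sigma$ (color by direction, then re-direct each monochromatic tree toward a chosen root), and uniqueness of its monomial is a degree-counting argument: a second $\sigma'$ with the same per-vertex per-color degree profile but a different coloring would produce an alternating-color walk that closes into an alternating cycle, contradicting the hypothesis.

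Your plan is to specialize to $x_i=(1,t_i)$, $y_j=(1,s_j)$ and then triangularize, hoping the determinant becomes a product of differences $t_i-s_j$. That is where the gap lies, and I don't think it is just bookkeeping. Setting $x_{i1}=1$ collapses many distinct monomials of the unspecialized minor onto the same monomial in the $t$'s and $s$'s, so the ``find a single surviving term'' criterion is no longer available; you would have to control genuine cancellation in a sum, and the claimed factorization into differences is unestablished (and, for $a=b=2$, implausible --- these are not graphic-matroid incidence minors). Likewise the deletion--induction step (``remove a sink-like edge, apply induction, re-add it'') needs a mechanism ensuring re-insertion strictly increases rank, and you explicitly defer that as ``where the real work lies.'' That deferred step \emph{is} the theorem. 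The paper's $\sigma$-and-monomial argument is exactly the mechanism you are missing: by never specializing, it reduces the rank claim to the purely combinatorial statement that the chosen $2$-colored orientation is uniquely determined by its per-vertex, per-color indegree and outdegree data, which is precisely what the no-directed-cycle and no-alternating-cycle hypotheses give.
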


\begin{proof}
Let $G$ be a bipartite graph on $[m] \sqcup [n]$ which has an edge orientation with no directed cycles or alternating cycles. Let $M_G$ be the $2|V(G)| \times |E(G)|$ matrix obtained by taking the columns of the matrix in Definition~\ref{def:bipartiterigidity} (with $a = b = 2$) indexed by edges of $G$. Instead of using the variables $\{x_{ij}, y_{k\ell}\}_{(i, j) \in [m] \times [2], \, (k, \ell) \in [n] \times [2]}$, it will be convenient to  use the variables $\{x_{vc}\}_{v \in V(G), \, c \in \{1, 2\}}$. Set $2V(G)=\{x_{vc}:v\in V(G),c\in\{1,2\}\}$ to be the set of variables. We also index the rows of $M_G$ by $2V(G)$. We will show that there is a maximal minor of $M_G$ for which some monomial occurs with coefficient $\pm 1$, so $M_G$ has the same rank in any characteristic. The proof of Theorem~\ref{thm:equivalence}(\ref{bipartitecase}) then implies the result.

For each injective map $\sigma \colon E(G) \to 2V(G)$, we set $M_\sigma \coloneqq \prod_{e\in E(G)}(M_G)_{\sigma(e),e}$. Up to a sign, this is one term in the expansion of a maximal minor of $M_G$. 
Since the column of $M_G$ corresponding to an edge $e=(u,v)$ has only $4$ non-zero entries $x_{u1},x_{u2},x_{v1}$, and $x_{v2}$, there are only a few $M_\sigma$ that are non-zero. A non-zero $M_\sigma$ can be represented by a $2$-colored directed version of $G$ (denoted $G_\sigma$): if $\sigma(e)=x_{uc}$, we color $(u,v)$ with color $c$ and direct $u\to v$. Note that $(M_G)_{x_{uc},e}=x_{vc}$.
Set $\text{indeg}_{c,G_\sigma}(v)$ to be the number of edges directed towards $v$ in $G_\sigma$ of color $c$, and similarly define $\text{outdeg}_{c, G_{\sigma}}$. 
In order for the map $\sigma$ to be injective, the outdegree of each vertex in $G_\sigma$ must be at most $1$ per color. In fact $\text{outdeg}_{c,G_\sigma}(v)=1$ if $x_{vc}\in \sigma(E(G))$ and is $0$ otherwise.  We have
    \[M_\sigma= \prod_{v\in V(G),\, c\in\{1,2\}} x_{vc}^{\text{indeg}_{c,G_\sigma}(v)}.\]

Using the acyclic orientation of $G$ with no alternating cycles, we construct a $G_{\sigma}$ for which the corresponding monomial $M_{\sigma}$ does not occur for any other injective map $\sigma' \colon E(G) \to 2V(G)$. We color the edges of $G$ with color $1$ if they are oriented from $[m]$ to $[n]$, and we use color $2$ if they are oriented from $[n]$ to $[m]$. This coloring has no monochromatic cycles or cycles which alternate in color. Now we pick out a root for every monochromatic connected tree, and we direct the edges towards the root. In this way, we obtain a directed $2$-coloring of $G$, say $G_\sigma$. As $\text{outdeg}_{c,G_{\sigma}}(v) \le 1$ for each $v$ and $c$, this $G_{\sigma}$ does indeed arise from an injective map $\sigma \colon E(G) \to 2V(G)$. Note that directions on the edges are \emph{not} the same as the orientation we used to construct $G_{\sigma}$. 

We claim that the monomial $M_\sigma$ only appears once in the determinant of the minor of $M_G$ with rows indexed by $\sigma(E(G))$, which is a maximal minor of $M_G$. 
Otherwise, there is another $G_{\sigma'}$ with the same monomial weight. Therefore $$\text{indeg}_{c,G_\sigma}(v)=\text{indeg}_{c,G_{\sigma'}}(v),\text{ for any $v\in V(G)$ and each color $c\in\{1,2\}$}.$$
Note that $\text{outdeg}_{c,G_\sigma}(v)=\text{outdeg}_{c,G_{\sigma'}}(v)$ for all $c,v$ since $\sigma(E(G))=\sigma'(E(G))$ as they appear in the same maximal minor and the range of $\sigma$ determines which vertices have outdegree $1$ for each color. Therefore $$\text{deg}_{c,G_\sigma}(v)=\text{deg}_{c,G_{\sigma'}}(v),\text{ for any $v\in V(G)$ and color $c\in\{1,2\}$}.$$
Note that $G_\sigma$ and $G_{\sigma'}$ have to give different colors to at least one edge. Indeed, suppose they give the same color to each edge. Since $G_\sigma$ has no monochromatic cycles, every monochromatic strongly connected component of $G_\sigma$ is a tree. Each tree has a unique vertex with $\text{outdeg}_{c,G_\sigma}=0$. We make that vertex the root of the tree and direct all edges of the tree towards the root. This recovers the directions for the edges of $G_\sigma$, as this is how $G_\sigma$ was defined. But the same process can be followed for $G_{\sigma'}$ to arrive at the same directions for the edges, i.e., once the outdegrees for each color are fixed, there is only one way to direct the graph. This implies that $\sigma=\sigma'$, so $G_\sigma$ and $G_{\sigma'}$ have to differ in at least one color if they are distinct.

However, consider the following process. We start with a vertex $v_0$ that is adjacent to an edge $e=(v_0,v_1)$ which is given a different color in $G_\sigma$ and $G_{\sigma'}$. We can assume $e$ has color $1$ in $G_\sigma$ and color $2$ in $G_{\sigma'}$. Since $\text{deg}_{c,G_\sigma}(v_1)=\text{deg}_{c,G_{\sigma'}}(v_1)$, there must be another edge $e'=(v_1,v_2)$ adjacent to $v_1$ that has color $2$ in $G_\sigma$ and color $1$ in $G_{\sigma'}$. Now we can use the same argument repeatedly to find a path $v_0\to v_1\to v_2\to\cdots$ which eventually self-intersects and forms a loop. However, this loop will be a cycle which alternates in color in $G_\sigma$, but there are no such cycles in $G_{\sigma}$.
\end{proof}

\begin{example}
Consider the graph on the left of Figure~\ref{fig:example}, with the indicated edge orientation, which has no directed cycles or alternating cycles. After constructing the corresponding coloring with no monochromatic or alternating cycles, each color is a tree. We choose $v_3$ as a root of the red tree and $u_1$ as a root of the blue tree. If red is the first color and blue is the second color, then the corresponding monomial $M_{\sigma}$ is $x_{u_1,1} x_{v_1,1} x_{v_3,1}^2 x_{u_1,2} x_{u_2, 2}x_{v_2,2}^2$, and $\sigma$ is the map given by $\sigma(\{u_1, v_1\}) = x_{v_1, 1}$, $\sigma(\{u_1, v_2\}) = x_{v_2, 2}$, $\sigma(\{u_1, v_3\}) = x_{u_1, 1}$, $\sigma(\{u_2, v_1\}) = x_{v_1, 2}$, $\sigma(\{u_2, v_2\}) = x_{u_2, 2}$, $\sigma(\{u_2, v_3\}) = x_{u_2, 1}$, $\sigma(\{u_3, v_1\}) = x_{u_3, 1}$, and $\sigma(\{u_3, v_2\}) = x_{u_3, 2}$. 
\end{example}

\begin{figure}
\begin{tikzpicture}[
    vertex/.style={
        circle,
        fill=black,
        draw=none,
        inner sep=0pt,
        minimum size=5pt
    },
    directed edge/.style={
        ->,
        >={Stealth[round, length=3mm, width=2mm]},
        thick,
        draw=gray!70
    }
]

\foreach \i in {1,2,3} {
    \node[vertex, label=left:{$u_\i$}] (u\i) at (0, {2 - (\i-1)*2}) {};
}

\foreach \j in {1,2,3} {
    \node[vertex, label=right:{$v_\j$}] (v\j) at (4, {2 - (\j-1)*2}) {};
}

\draw[directed edge] (u1) -- (v1);
\draw[directed edge] (v2) -- (u1);
\draw[directed edge] (u1) -- (v3);
\draw[directed edge] (v1) -- (u2);
\draw[directed edge] (v2) -- (u2);
\draw[directed edge] (u2) -- (v3);
\draw[directed edge] (u3) -- (v1);
\draw[directed edge] (v2) -- (u3);

\end{tikzpicture}
\begin{tikzpicture}[
    vertex/.style={
        circle,
        fill=black,
        draw=none,
        inner sep=0pt,
        minimum size=5pt
    },
    directed edge/.style={
        ->,
        >={Stealth[round, length=3mm, width=2mm]},
        thick,
        draw=gray!70
    }
]

\foreach \i in {1,2,3} {
    \node[vertex, label=left:{$u_\i$}] (u\i) at (0, {2 - (\i-1)*2}) {};
}

\foreach \j in {1,2,3} {
    \node[vertex, label=right:{$v_\j$}] (v\j) at (4, {2 - (\j-1)*2}) {};
}

\draw[directed edge] [color=red] (v1) -- (u1);
\draw[directed edge] [color=blue] (v2) -- (u1);
\draw[directed edge] [color=red] (u1) -- (v3);
\draw[directed edge] [color=blue] (v1) -- (u2);
\draw[directed edge] [color=blue] (u2) -- (v2);
\draw[directed edge] [color = red] (u2) -- (v3);
\draw[directed edge] [color=red] (u3) -- (v1);
\draw[directed edge] [color=blue] (u3) -- (v2);

\end{tikzpicture}

\caption{A bipartite graph $G$ with an edge orientation which has no directed cycles or alternating cycles, and the corresponding $G_{\sigma}$.}\label{fig:example}
\end{figure}

\begin{remark}
    The argument above can also be applied to rank $2$ skew-symmetric matrix completion. 
    A $2$-coloring of the edges of a graph $G$ is \emph{unbalanced} if it has no monochromatic cycles or trails which alternate in color. An acyclic orientation of the edges of $G$ is \emph{unbalanced} if it has no alternating trail. 
    When $G$ is bipartite, an unbalanced coloring is equivalent to an unbalanced acyclic orientation, so Theorem~\ref{thm:Bernstein} states that $G$ is independent in $\mathcal{B}_{m,n}(2,2)$ if and only if it has an unbalanced coloring.

    In \cite{bernstein2017completion}, Bernstein proved that a graph $G$ has an unbalanced acyclic orientation if and only if it is independent in $\mathcal{H}_{n}(2)$. Extending the proof of Proposition~\ref{prop:char-free-Bernstein}, one can show that a graph with an unbalanced coloring is independent in the dual of $\mathrm{W}_n(n-2, p)$ for any $p$. This implies that a graph with an unbalanced coloring has an unbalanced acyclic orientation, but we do not know a combinatorial proof of this fact. We do not know if the converse holds.
    
\end{remark}

\begin{proof}[Proof of Theorem~\ref{thm:char}]
The case when $s = 0$ or $s = m$ is trivial. The case when $m - s = 1$ is proven in \cite{gopalan2017maximally}. The case when $1 \le s \le  3$ is proven in Corollary~\ref{lem:r=1}, Proposition~\ref{lem:m-a=2}, and Proposition~\ref{lem:m-a=3}. The case when $m - s = n - r = 2$ is proven in Proposition~\ref{prop:char-free-Bernstein}. 
\end{proof}

\section{Conjectural description of the bipartite rigidity matroid}

We now give a conjectural description of the independent sets of $\mathcal{B}_{m,n}(d,d)$ for all $d$. Using Proposition~\ref{prop:cone}, this gives a description of the independent sets of $\mathcal{B}_{m,n}(a,b)$ for all $a$ and $b$. Our conjecture is inspired by Bernstein's proof of Theorem~\ref{thm:Bernstein} using tropical geometry \cite{bernstein2017completion}. We show that the sets we describe are in fact independent in $\mathcal{B}_{m,n}(d,d)$, and moreover are independent in the dual of $\mathrm{T}_{m,n}(m-d, n-d, p)$ for all $p$. In particular, our conjecture implies that the tensor matroid is independent of the characteristic.

\begin{definition}\label{def:d-bernstein}
    Given a bipartite graph $G$ and an integer $d\geq 0$, a $d$-coloring of the edges of $G$ is \emph{$d$-Bernstein} if there are no monochromatic cycles and there exists a labeling $c \colon V(G)\to \mathbb{R}^d$ which sends $v\mapsto (c_1(v),c_2(v),\dots,c_d(v))$ that satisfies the following conditions:
    \begin{enumerate}
        \item $c_1(v)+c_2(v)+\cdots+c_d(v)=0$ for any $v\in V(G)$;
        \item for every edge $(u,v)\in E(G)$ with color $i$, $c_i(u)+c_i(v)>c_j(u)+c_j(v)$ for any $j\in[d]\setminus\{i\}$.
    \end{enumerate}
\end{definition}

\begin{remark}
    When $d=2$, we recover Bernstein's condition in Theorem~\ref{thm:Bernstein}, if we orient the edges with color $1$ from $[m]$ to $[n]$ and orient the edges with color $2$ from $[n]$ to $[m]$.
\end{remark}

\begin{proposition}\label{prop:d-Bernstein}
    If a bipartite graph $G$ admits a $d$-coloring that is $d$-Bernstein, then $G$ is independent in the dual of $\mathrm{T}_{m,n}(m-d,n-d,p)$ for all $p$.
\end{proposition}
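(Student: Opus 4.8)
The plan is to generalize the argument of Proposition~\ref{prop:char-free-Bernstein} from two colors to $d$ colors. As there, it suffices to exhibit, for the bipartite rigidity matrix $M_G$ of Definition~\ref{def:bipartiterigidity} with $a=b=d$ — restricted to the columns indexed by $E(G)$, and (its entries being variables) regarded as a matrix over $\mathbb{Z}[\{x_{vi}\}_{v\in V(G),\,i\in[d]}]$ after the relabeling of variables used in the proof of Proposition~\ref{prop:char-free-Bernstein} — a maximal minor in whose determinant some monomial occurs with coefficient $\pm1$; such a minor is nonzero in every characteristic, and the proof of Theorem~\ref{thm:equivalence}(\ref{bipartitecase}) then gives that $E(G)$ is independent in the dual of $\mathrm{T}_{m,n}(m-d,n-d,p)$ for all $p$. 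First I would fix a $d$-Bernstein coloring with color function $\kappa\colon E(G)\to[d]$ and vertex labeling $c\colon V(G)\to\mathbb{R}^d$. Since the coloring has no monochromatic cycles, each color class is a forest; choosing a root in each monochromatic tree and orienting the tree toward its root yields a $d$-colored directed graph $G_\sigma$ in which every vertex has outdegree at most $1$ in each color. Exactly as in Proposition~\ref{prop:char-free-Bernstein}, $G_\sigma$ is the datum of an injective selection $\sigma\colon E(G)\to\{x_{vi}\}$ picking one nonzero entry $M_{\sigma(e),e}$ from each column, the monomial $M_\sigma:=\prod_e M_{\sigma(e),e}=\prod_{v,i} x_{vi}^{\operatorname{indeg}_{i,G_\sigma}(v)}$ is a term in the expansion of the maximal minor of $M_G$ with rows indexed by $\sigma(E(G))$, and $M_\sigma$ will occur in that determinant with coefficient $\pm1$ once we show that no other injective selection $\sigma'$ with $\sigma'(E(G))=\sigma(E(G))$ has $M_{\sigma'}=M_\sigma$.

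The heart of the argument, which I expect to be the main obstacle, is this uniqueness statement, and it is here that conditions (1) and (2) of Definition~\ref{def:d-bernstein} enter — condition (1) only as a harmless normalization, since (2) is unchanged if one adds a constant to all coordinates of $c(v)$. So suppose $\sigma'$ is an injective selection with $\sigma'(E(G))=\sigma(E(G))$ and $M_{\sigma'}=M_\sigma$. Equality of monomials gives $\operatorname{indeg}_{i,G_{\sigma'}}(v)=\operatorname{indeg}_{i,G_\sigma}(v)$ for all $v,i$, and equality of ranges gives $\operatorname{outdeg}_{i,G_{\sigma'}}(v)=\operatorname{outdeg}_{i,G_\sigma}(v)$ for all $v,i$ (these lie in $\{0,1\}$, and the range records precisely which pairs $(v,i)$ have outdegree $1$); hence the induced color functions $\kappa'$ of $\sigma'$ and $\kappa$ of $\sigma$ assign every vertex the same number of incident edges of each color. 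Now I would invoke the identity
\[
  \sum_{v\in V(G)}\sum_{i=1}^{d}\deg_{\lambda,i}(v)\,c_i(v) \;=\; \sum_{e=\{u,v\}\in E(G)} \bigl(c_{\lambda(e)}(u)+c_{\lambda(e)}(v)\bigr),
\]
valid for any color function $\lambda\colon E(G)\to[d]$, where $\deg_{\lambda,i}(v)=\#\{e\ni v:\lambda(e)=i\}$. The left-hand side depends only on the color-degree sequence, so it is the same for $\lambda=\kappa$ and $\lambda=\kappa'$. On the right-hand side, condition (2) of Definition~\ref{def:d-bernstein} says that for each edge $e=\{u,v\}$ the quantity $c_{\lambda(e)}(u)+c_{\lambda(e)}(v)$ is \emph{strictly} maximized over the choice $\lambda(e)\in[d]$ by the $d$-Bernstein color $\kappa(e)$. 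Therefore the right-hand side for $\lambda=\kappa$ is at least that for $\lambda=\kappa'$, with equality if and only if $\kappa'(e)=\kappa(e)$ for every edge; as the two sides are equal, $\kappa'=\kappa$. Finally, once $\sigma'$ and $\sigma$ induce the same coloring, each monochromatic tree is oriented by $\sigma'$ with the same outdegrees as by $\sigma$, so its unique outdegree-$0$ vertex, i.e. its root, is the same; and a tree admits a unique orientation in which every vertex has outdegree at most $1$ and a prescribed vertex has outdegree $0$, namely the orientation toward that vertex. Hence $G_{\sigma'}=G_\sigma$, so $\sigma'=\sigma$, and $M_\sigma$ occurs with coefficient $\pm1$ in the chosen maximal minor, as required.

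Everything other than the uniqueness step is bookkeeping already present for $d=2$ in Proposition~\ref{prop:char-free-Bernstein}, together with the elementary fact about orientations of forests just used; in the uniqueness step the only real content is isolating the right invariant — the pairing of the color-degree sequence against $c$ — and recognizing that condition (2) of Definition~\ref{def:d-bernstein} is precisely the statement that this invariant is maximized edge-by-edge by the $d$-Bernstein coloring. The same strategy, applied to the matrix of Definition~\ref{def:hyperconnectivity} in place of that of Definition~\ref{def:bipartiterigidity}, should give an analogous result for the wedge power matroid $\mathrm{W}_n(n-d,p)$.
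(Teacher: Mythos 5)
Your proposal is correct and follows the same route as the paper: reduce to the uniqueness of a monomial in a maximal minor of the rigidity matrix, and establish that uniqueness by pairing the color-degree sequence against the vertex labeling $c$, noting that condition (2) of Definition~\ref{def:d-bernstein} forces the $d$-Bernstein coloring to strictly maximize the resulting edge-sum. Your write-up spells out the bookkeeping that the paper delegates to ``almost identical to Proposition~\ref{prop:char-free-Bernstein}'' (and quietly fixes an index typo in the paper's displayed identity), but the content is the same.
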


\begin{proof}
    The proof is almost identical to the proof of Proposition~\ref{prop:char-free-Bernstein} except in the last step. The statement reduces to proving that if there is a $d$-Bernstein coloring of $G$ say $G_\sigma$, then there does not exist another $d$-coloring of $G$ say $G_{\sigma'}$, where $G_\sigma$ and $G_{\sigma'}$ differ in at least one edge color, and
    $$\text{deg}_{c,G_\sigma}(v)=\text{deg}_{c,G_{\sigma'}}(v),\text{ for any $v\in V(G)$ and color $c\in[d]$}.$$
    
    Consider the following equality:
    \[\sum_{v\in V(G),i\in[d]}\text{deg}_{c,G_\sigma}(v)c_i(v)=\sum_{e=(u,v)\in E(G)}(c_{\sigma(e)}(u)+c_{\sigma(e)}(v)),\text{ where }\sigma(e)\text{ is the color of $e$ in $G_\sigma$}.\]
    This equality also holds when we replace $\sigma$ with $\sigma'$. However, as we change from $\sigma$ to $\sigma'$, the left hand side does not change, but the right hand side strictly decreases because of condition (2) in Definition~\ref{def:d-bernstein} and $\sigma,\sigma'$ give different colors to at least one edge.
\end{proof}

\begin{conjecture}\label{conj:generalizedbernstein}
    If $G$ is independent in $\mathcal{B}_{m,n}(d,d)$, then $G$ admits a $d$-coloring that is $d$-Bernstein.
\end{conjecture}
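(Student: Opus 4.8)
The plan is to adapt Bernstein's tropical argument for the case $d=2$ \cite{bernstein2017completion}. As recalled in Section~\ref{sec:poschar}, $\mathcal{B}_{m,n}(d,d)$ is the algebraic matroid of the variety $Y_d \subseteq \mathbb{C}^{m \times n}$ of matrices of rank at most $d$, so $G$ is independent if and only if the coordinate projection $\pi_G \colon Y_d \to \mathbb{C}^{E(G)}$ is dominant. Since tropicalization commutes with coordinate projections (on the dense torus), this is equivalent to surjectivity of the induced map $\operatorname{trop}(\pi_G) \colon \operatorname{trop}(Y_d) \to \mathbb{R}^{E(G)}$, where $\operatorname{trop}(Y_d)$ is the tropical determinantal variety, i.e. the set of $m \times n$ real matrices of Kapranov rank at most $d$. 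I would first reduce to the case that $G$ is a basis of $\mathcal{B}_{m,n}(d,d)$: if $\widehat{G} \supseteq G$ is a basis carrying a $d$-coloring that is $d$-Bernstein, with vertex labeling $c \colon V(\widehat{G}) \to \mathbb{R}^d$, then deleting the edges of $E(\widehat{G}) \setminus E(G)$ destroys no monochromatic cycles and leaves conditions (1) and (2) of Definition~\ref{def:d-bernstein} intact for the surviving edges, so the same $c$ witnesses a $d$-Bernstein coloring of $G$. Passing to general $(a,b)$ from the $(d,d)$ case would then be handled via the coning operation of Proposition~\ref{prop:cone}.

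Next comes the construction. Matrices of the form $M_{ij} = \max_{c \in [d]} \bigl(a^{(c)}_i + b^{(c)}_j\bigr)$ have Barvinok rank at most $d$, hence Kapranov rank at most $d$, and so lie in $\operatorname{trop}(Y_d)$. Given a basis $G$ and the surjectivity of $\operatorname{trop}(\pi_G)$, I would choose a generic $w \in \mathbb{R}^{E(G)}$, lift it to some $M \in \operatorname{trop}(Y_d)$ with $M|_{E(G)} = w$, and pass to a maximal cone $\mathcal{C}$ of a polyhedral structure on $\operatorname{trop}(Y_d)$ whose relative interior contains $M$; since $|E(G)| = \operatorname{rank}\mathcal{B}_{m,n}(d,d) = \dim \operatorname{trop}(Y_d)$, the restriction $\pi_G|_{\mathcal{C}}$ is injective and linear, identifying $\mathcal{C}$ with a full-dimensional cone in $\mathbb{R}^{E(G)}$. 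If $M$ can be taken of Barvinok rank at most $d$ it records $d$ ``layers'' $a^{(c)}, b^{(c)}$: color the edge $(i,j)$ by the (generically unique) index attaining the maximum defining $M_{ij}$, and set $c_c(v)$ to the slope of the active layer at $v$, renormalized so that $\sum_c c_c(v) = 0$. Condition (2) is precisely uniqueness of the active layer at each edge, while ``no monochromatic cycle'' should follow from the fact that a monochromatic cycle would impose a linear relation among $\{M_{ij}\}_{(i,j) \in E(G)}$ valid on all of $\mathcal{C}$, contradicting that $\pi_G|_{\mathcal{C}}$ has rank $|E(G)|$.

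The main obstacle, and the reason the statement remains conjectural for $d \ge 3$, is the qualification ``if $M$ can be taken of Barvinok rank at most $d$.'' When $d = 2$ this is free, since Barvinok rank, Kapranov rank, and tropical rank coincide on matrices of rank at most $2$ and $\operatorname{trop}(Y_2)$ admits an explicit combinatorial (ultrametric/tree) description that Bernstein uses directly. For $d \ge 3$ these ranks genuinely differ, membership in $\operatorname{trop}(Y_d)$ is delicate, and there is no known combinatorial model for its maximal cones --- producing one compatible with Definition~\ref{def:d-bernstein} is essentially equivalent to the conjecture. So the real content is to show that every coordinate subspace onto which $\operatorname{trop}(Y_d)$ surjects is already surjected upon ``through'' the Barvinok-rank-$\le d$ locus, or at least through some locus supporting the layer structure.

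A complementary line of attack avoids $\operatorname{trop}(Y_d)$ altogether: take a basis $G$ and build the coloring by induction, peeling off a vertex of small degree --- the analogue for general $d$ of the degree bounds that drive the $d=2$ and small-$(m-a)$ arguments --- and extending a partial vertex labeling $c \colon V(G) \to \mathbb{R}^d$ one step at a time. The difficulty here is that $\mathcal{B}_{m,n}(d,d)$ has circuits much more complicated than Laman circuits (Example~\ref{ex:nonlaman}, Proposition~\ref{prop:7733}), so it is unclear that a basis always admits such a peeling order, and even showing that a basis must contain a removable low-degree vertex seems to require a genuinely new combinatorial input.
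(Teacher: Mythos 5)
The statement you were asked about is a \emph{conjecture} in the paper, not a theorem: immediately after stating it, the authors write that they have only verified it for $\mathcal{B}_{5,5}(3,3)$ and for $\mathcal{B}_{m,n}(d,d)$ when $d \ge m-1$, and no proof is offered. There is therefore no ``paper's own proof'' to compare against, and your proposal --- which you yourself frame as a sketch that stalls --- is best read as an analysis of the difficulty rather than a candidate argument.

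On that score your analysis is accurate. The setup is right: $\mathcal{B}_{m,n}(d,d)$ is the algebraic matroid of $Y_d$, independence of $G$ is dominance of $\pi_G$, and tropicalization turns this into surjectivity of $\operatorname{trop}(\pi_G)\colon\operatorname{trop}(Y_d)\to\mathbb{R}^{E(G)}$. The reduction to bases is harmless, the observation that a Barvinok-rank-$\le d$ tropical matrix $M_{ij}=\max_{c}(a_i^{(c)}+b_j^{(c)})$ with generic tie-breaking yields exactly a $d$-Bernstein coloring is correct (your renormalization $c_c(v)=a_v^{(c)}-\tfrac{1}{d}\sum_{c'}a_v^{(c')}$ makes conditions (1) and (2) of Definition~\ref{def:d-bernstein} immediate, and a monochromatic cycle would force a linear relation on the cone, contradicting that $\pi_G$ has full rank there). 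Most importantly, you correctly identify the actual obstruction: for $d\ge 3$ Barvinok, Kapranov, and tropical rank genuinely differ, so surjectivity of $\operatorname{trop}(\pi_G)$ from all of $\operatorname{trop}(Y_d)$ does not obviously factor through the Barvinok-rank locus, and there is no combinatorial model for the maximal cones of $\operatorname{trop}(Y_d)$ in general. This is precisely the content of the paper's Remark following the conjecture, which says Conjecture~\ref{conj:generalizedbernstein} is \emph{equivalent} to the assertion that the Barvinok rank $d$ cones determine $\mathcal{B}_{m,n}(d,d)$. So you have not filled the gap --- you have correctly located it, and the paper records the same equivalence as the heart of the problem. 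Your alternative inductive/``peel a low-degree vertex'' strategy is also reasonable to flag, and the examples you cite (Example~\ref{ex:nonlaman}, Proposition~\ref{prop:7733}) are exactly the evidence that no naive peeling order exists. In short: no error in reasoning, but no proof either, which is the correct state of affairs for a conjecture.
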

When $d = 1$, this holds because $\mathcal{B}_{m,n}(1,1)$ is the graphical matroid of the complete bipartite graph $\mathrm{K}_{m,n}$. 
When $d=2$, this is proven in \cite{bernstein2017completion}. We have checked this conjecture for $\mathcal{B}_{5,5}(3,3)$ and for $\mathcal{B}_{m,n}(d,d)$ when $d \ge m-1$. This conjecture and Proposition~\ref{prop:cone} imply that $\mathrm{T}_{m,n}(s,r,0)=\mathrm{T}_{m,n}(s,r,p)$ for all $p$.

\begin{remark}
    The $d$-Bernstein condition is closely related to the notion of \emph{Barvinok rank} for tropical matrices \cite{tropicalrank}. Conjecture~\ref{conj:generalizedbernstein} is equivalent to saying that the Barvinok rank $d$ cones, which are a subset of the cones in the tropical determinantal variety, determine the matroid $\mathcal{B}_{m,n}(d,d)$.
\end{remark}

\bibliographystyle{alpha}
\bibliography{ref.bib}

\end{document}